\crefname{assumption}{Assumption}{Assumptions}
\crefname{figure}{Figure}{Figures}
\theoremstyle{plain}
\newtheorem{theorem}{Theorem}[section]
\newtheorem{corollary}[theorem]{Corollary}
\newtheorem{lemma}[theorem]{Lemma}
\newtheorem{proposition}[theorem]{Proposition}
\numberwithin{equation}{section}
\theoremstyle{definition}
\theoremstyle{remark}
\newtheorem{remark}[theorem]{Remark}
\newtheorem{assumption}[theorem]{Assumption}
\setlist[itemize]{leftmargin=.5in}
\setlist[enumerate]{leftmargin=.5in,topsep=3pt,itemsep=3pt,label=(\roman*)}
\newcommand{\email}[1]{\href{#1}{#1}}
\newcommand{\TheTitle}{Linearization of ergodic McKean SDEs and applications}
\newcommand{\TheAuthors}{G. A. Pavliotis, A. Zanoni}
\title{\TheTitle}
\author{Grigorios A. Pavliotis \thanks{Department of Mathematics, Imperial College London, London SW7 2AZ, UK, \email{g.pavliotis@imperial.ac.uk}.}
\and Andrea Zanoni \thanks{Centro di Ricerca Matematica Ennio De Giorgi, Scuola Normale Superiore, Pisa, Italy, \email{andrea.zanoni@sns.it}.}}
\date{}
\newcommand{\abs}[1]{\left\lvert#1\right\rvert}
\newcommand{\norm}[1]{\left\|#1\right\|}
\newcommand{\Z}{\mathbb{Z}}
\newcommand{\R}{\mathbb{R}}
\newcommand{\T}{\mathbb{T}}
\newcommand{\epl}{\varepsilon}
\newcommand{\defeq}{\coloneqq}
\newcommand{\eqdef}{\eqqcolon}
\newcommand{\E}{\operatorname{\mathbb{E}}}
\newcommand{\argmax}{\operatorname{argmax}}
\renewcommand{\d}{\mathrm{d}}
\newcommand{\dd}{\,\mathrm{d}}
\definecolor{shade}{RGB}{100, 100, 100}
\definecolor{bordeaux}{RGB}{128, 0, 50}
\definecolor{leg1}{RGB}{0,114,189}
\definecolor{leg2}{RGB}{217,83,25}
\definecolor{leg3}{RGB}{237,177,32}
\definecolor{leg4}{RGB}{126,47,142}
\definecolor{leg5}{RGB}{119,172,48}
\definecolor{leg21}{RGB}{62,38,169}
\definecolor{leg22}{RGB}{46,135,247}
\definecolor{leg23}{RGB}{55,200,151}
\definecolor{leg24}{RGB}{254,195,56}
\begin{document}
	
\maketitle	

\begin{abstract} 
In this article, we consider McKean stochastic differential equations, as well as their corresponding McKean--Vlasov partial differential equations, which admit a unique stationary state, and we study the \emph{linearized} Itô diffusion process that is obtained by replacing the law of the process in the convolution term with the unique invariant measure. We show that the law of the nonlinear McKean process converges to the law of this linearized process exponentially fast in time, both in relative entropy and in Wasserstein distance. We study the problem in both the whole space and the torus. We then show how we can employ the resulting linear (in the sense of McKean) Markov process to analyze properties of the original nonlinear and nonlocal dynamics that depend on their long-time behavior. In particular, we propose a linearized maximum likelihood estimator for the nonlinear process which is asymptotically unbiased, and we study the joint diffusive-mean field limit of the underlying interacting particle system.
\end{abstract}

\textbf{AMS subject classifications.} 26D10, 35Q70, 35Q83, 60J60, 82C22.

\textbf{Key words.} Diffusive limit, interacting particle systems, invariant measure, logarithmic Sobolev inequality, maximum likelihood estimation, McKean--Vlasov PDE.

\section{Introduction}

Interacting particle systems and their mean field limit have been an active area of research in the last few decades, due to their many applications to problems in physics \cite{BiT08,Gol16}, biology \cite{Suz05}, social sciences \cite{Daw83,GPY17,GGS21}, algorithms for sampling and optimization \cite{CHS22,CJZ24} and, more recently, machine learning~\cite{SiS20,MMN18,RoV22, BPA24}. Starting from earlier fundamental works, e.g.,~\cite{Oel84,Szn91}, in recent years there have been important improvements on the understanding of uniform propagation of chaos and its connection to the logarithmic Sobolev inequality (LSI) \cite{Mal01,CDP20,LaL23,GLM24,MRW24,DeT24,DGP23}, nonuniqueness of stationary states \cite{MoR24}, and phase transitions \cite{CGP20,DGP21}, as well as the study of fluctuations around the mean field limit \cite{KLO12} and the Dean--Kawasaki equation \cite{Dea96,KaW98,KLR19,GGK22}. Moreover, there have been several studies on the inference problem for the McKean stochastic differential equation (SDE) and the McKean--Vlasov partial differential equation (PDE). Different approaches, based on maximum likelihood \cite{DeH23}, stochastic gradient descent in continuous time \cite{SKP23}, spectral theoretic (e.g., martingale eigenfunction estimator) techniques~\cite{PaZ22}, deconvolution techniques \cite{ABP24}, the generalized method of moments \cite{PaZ24,CGL24}, have been introduced and analyzed. In addition, fully Bayesian nonparametric inverse problems for the McKean--Vlasov PDE have been considered in \cite{NPR24}. An interesting aspect of the inference problem for the McKean SDE, when the measurement model consists of observations of the particle system, is the nonuniqueness of stationary states for the mean field PDE at high interaction strengths (equivalently, low temperatures) if the confining and interaction potentials are not convex on the whole space \cite{Zha23} or not $H$-stable on the torus \cite{CGP20,DGP21}. This makes the use of inference methodologies that rely on the ergodic properties of the dynamics problematic. This issue was addressed in~\cite{PaZ22} for the Desai--Zwanzig model by first inferring the correct invariant measure and then linearizing the generator of the McKean dynamics around it, and in~\cite{NPR24} by considering observations only at short times, for appropriately chosen initial conditions. One of the main goals of this paper is to further develop the linearization methodology introduced in~\cite{PaZ22}, in the context of parametric inference for McKean SDEs.

In addition, multiscale problems for interacting particle systems and McKean SDEs have been studied in recent years~\cite{GoP17,DGP21,BeS23}. In particular, in~\cite{DGP21} the joint diffusive-mean field limit for systems of weakly interacting reversible diffusions with periodic confining and interaction potentials was studied. It was shown that the diffusive and mean field limits commute when the mean field system restricted on the torus has a unique invariant measure. On the other hand, it was rigorously established that the two limits do not commute when the dynamics on the torus exhibits phase transitions. Furthermore, the covariance matrix of the limiting Brownian motion can be calculated using the standard homogenization (Kipnis--Varadhan) formula~\cite[Section 13.6.1]{PaS08}, for a \emph{linear} elliptic cell problem, obtained by calculating the convolution in the generator of the McKean SDE with respect to the Gibbs measure of the dynamics in the torus. In the present work, we show that the diffusive limit of the interacting particle system and the formula for the diffusion coefficient can be obtained from the linearization procedure.

In this paper we consider the dynamics of a system of $N$ weakly interacting reversible Langevin diffusions, described by the following system of Itô SDEs
\begin{equation} \label{eq:interacting_particle_system}
\begin{aligned}
\d X_t^{(n)} &= - \nabla V(X_t^{(n)}) \dd t - \frac1N \sum_{i=1}^{N} \nabla W(X_t^{(n)} - X_t^{(i)}) \dd t + \sqrt{2 \beta^{-1}} \dd B_t^{(n)}, \\
X_0^{(n)} &\sim \mu_0, \qquad n = 1, \dots, N,
\end{aligned}
\end{equation}
where $V,W \in \mathcal C^2(\mathbb D)$ are the confining and interacting potentials, respectively, $\beta > 0$ is the inverse temperature, and $\mu_0 \in \mathcal P(\mathbb D)$ is the initial distribution which is common among all the particles. The domain $\mathbb D$ can be the entire space $\R^d$ or the multidimensional torus $\T^d$. Under suitable conditions on the confining and interaction potentials, a rather complete description of the propagation of chaos phenomenon has been achieved, as well as of the properties of the mean field PDE~\cite{Mal01,LaL23,DGP23}. In particular, in the limit as $N \to \infty$, particles become decorrelated and $X_t^{(n)}$, $n = 1, \dots, N$, converge to the mean field limit
\begin{equation} \label{eq:McKeanSDE}
\d X_t = - \nabla V(X_t) \dd t -  (\nabla W * f_t)(X_t) \dd t + \sqrt{2 \beta^{-1}} \dd B_t, \qquad X_0 \sim \mu_0,
\end{equation} 
where $f_t \colon \mathbb D \to \R$ denotes the density of the law $\mu_t$ of the process $X_t$ with respect to the Lebesgue measure, which satisfies the McKean--Vlasov PDE
\begin{equation} \label{eq:McKeanVlasovPDE}
\frac{\partial f_t}{\partial t} = \nabla \cdot (\nabla V f_t + (\nabla W * f_t) f_t) + \beta^{-1} \Delta f_t.
\end{equation}
The, possibly more than one, invariant measures $\d\mu_\infty(x) = f_\infty(x) \dd x$ of the McKean SDE~\eqref{eq:McKeanSDE} are solutions of the Kirkwood--Monroe integral equation~\cite{Tam84, Dre87}
\begin{equation} \label{eq:integral_equation}
f_\infty = \frac{1}{Z}e^{- \beta( V + (W * f_\infty))}, \qquad Z = \int_{\mathbb D} e^{- \beta( V(z) + (W * f_\infty)(z))} \dd z.
\end{equation}
It is well known that, under the conditions on the interaction and confining potentials that ensure uniform propagation of chaos for the interacting particle system, the McKean SDE is geometrically ergodic, i.e., it converges exponentially fast in $L^1$, relative entropy, and 2--Wasserstein distance to its unique invariant measure (see, e.g., \cite{Mal01} for the dynamics in the whole space $\R^d$). Moreover, a different approach is presented in \cite{EGZ19}, where the authors prove convergence to equilibrium in Wasserstein distance by means of coupling techniques, specifically combination of reflection and synchronous couplings, which enable them to obtain explicit contraction rates. Recent developments in the study of uniform propagation of chaos and of its connection to the uniform LSI can be found in~\cite{LaL23,DGP23}. This idea has also been used in \cite{GLM24} to prove the propagation of chaos on the torus under different assumptions on the interaction potential that replace convexity. Considering the multidimensional torus $\T^d$ as a state space, a different approach based on the (Lions) derivatives of the semigroup generated by the Fokker--Planck equation is proposed in \cite{DeT24}.

Let us now place ourselves in the setting where we have uniform propagation of chaos and, therefore, the mean field dynamics converges exponentially fast to the unique invariant measure. In this case, the McKean SDE \eqref{eq:McKeanSDE} and the \emph{linear} (in the sense of McKean) diffusion process obtained by replacing the density $f_t$ with $f_{\infty}$
\begin{equation} \label{eq:McKeanSDE_linear}
\d Y_t = - \nabla V(Y_t) \dd t -  (\nabla W * f_\infty)(Y_t) \dd t + \sqrt{2 \beta^{-1}} \dd B_t, \qquad Y_0 \sim \nu_0,
\end{equation} 
for some initial condition $\nu_0 \in \mathcal P(\mathbb D)$, are reversible, when started at stationarity, with respect to the same Gibbsian invariant measure $f_\infty$, the unique solution of~\eqref{eq:integral_equation}; see~\cite[Chapter 4]{Pav14}. The exponentially fast convergence of $f_t$ to $f_{\infty}$, suggests that, for arbitrary initial conditions $X_0 \sim \mu_0$ and $Y_0 \sim \nu_0$, we expect that the nonlinear process $X_t$ and the linear process $Y_t$ are exponentially close in time. This suggests that, in order to study the long-term behavior of solutions to the McKean SDE and to the corresponding McKean--Vlasov PDE, it is sufficient to consider the linearized process $Y_t$. This fact was already used implicitly in \cite{PaZ22} when applying the martingale estimating function methodology~\cite{KeS99} to estimate parameters in the drift as well as the diffusion coefficient in equation~\eqref{eq:interacting_particle_system}. In particular, it was shown in~\cite{PaZ22} that, in order to infer parameters in the mean field dynamics, it is sufficient to estimate the eigenpairs of the generator of the linearized dynamics \eqref{eq:McKeanSDE_linear}. This linearization methodology was also used in \cite{DGP21} when studying the joint diffusive-mean field limit of the interacting particle system \eqref{eq:interacting_particle_system} with periodic coefficients. In particular, it was shown that in order to study the diffusive limit of the mean field dynamics, it is sufficient to prove a central limit theorem and, based on this, an invariance principle for the linearized dynamics. Consequently, the formula for the covariance matrix of the limiting Brownian motion requires the solution of the Poisson equation for the linearized process, and the computation of an average with respect to the Gibbs measure of the process on the torus. We reiterate that the nonlinear and linear processes are reversible with respect to the same Gibbs measure.

In this paper we build on the ideas presented in~\cite{DGP21, PaZ22} in order to develop the linearization of the McKean SDE as a systematic tool for studying the mean field limit of weakly interacting diffusions. More specifically, we show that, when confronted with tasks that require the analysis of the long-time behaviour of the dynanics, the nonlinear process $X_t$ can be ``replaced'' by the linear process $Y_t$, whenever the confining and interaction potentials are such that uniform propagation of chaos holds. We obtain quantitative estimates on the distance between the two processes both in relative entropy and Wasserstein distance, considering both the whole space and the torus as state spaces. Moreover, in the former setting, we also show strong convergence in $L^2$ for the paths using a coupling argument. We remark that the problem of linearizing nonlinear Fokker--Planck equations on the whole space has also been studied in \cite{RRW22}, where the authors propose a different method to obtain a linearized process which uses the natural tangent bundle over the space of probabilities with the corresponding gradient operator. Our analysis, instead, is mostly based on entropy estimates between solutions of Fokker--Planck equations presented in \cite{DLP18,LaL23}, based on the important paper~\cite{BRS16}. We then combine these estimates with a logarithmic Sobolev inequality for the Gibbs measure and for the nonlinear and linear semigroups. In particular, we have that, for $X_t \sim \mu_t$ and $Y_t \sim \nu_t$, 
\begin{equation}
\mathcal H (\mu_t | \nu_t) + \frac1{2\beta} \int_0^t  \mathcal I(\mu_s | \nu_s) \dd s  \le \mathcal H (\mu_0 | \nu_0)  + \frac\beta2 \int_0^t \int_{\mathbb D} \norm{(\nabla W * (\mu_s - \mu_\infty))(x)}^2 \mu_s(x) \dd x \dd s,
\end{equation}
where $\mathcal H$, $\mathcal I$ denote the relative entropy and relative Fisher information, respectively. This, together with the LSI, implies
\begin{equation}
\mathcal H (\mu_t | \nu_t) + \frac1{2\beta\lambda} \int_0^t  \mathcal H(\mu_s | \nu_s) \dd s  \le \mathcal H (\mu_0 | \nu_0)  + \frac\beta2 \int_0^t \int_{\mathbb D} \norm{(\nabla W * (\mu_s - \mu_\infty))(x)}^2 \mu_s(x) \dd x \dd s,
\end{equation}
for some appropriate constant $\lambda > 0$. The details can be found in~\cref{sec:analysis}. The fact that the laws $\mu_t$ and $\nu_t$ are exponentially close follows from this estimate.

We consider the problem both in $\R^d$ and in the torus $\T^d$. In the latter case, we extend some of the results obtained in~\cite{CGP20, GLM24} on convergence to equilibrium and the LSI to the case where a confining potential in present, in addition to the interaction potential. This is one of the novelties of the present work. Then, we apply our methodology to propose a maximum likelihood estimator (MLE) for the mean field SDE \eqref{eq:McKeanSDE} which is based on the likelihood of the linearized process $Y_t$, and we prove its asymptotic unbiasedness in the limit of infinite observations. Finally, we also revisit the combined diffusive-mean field limit for interacting diffusions in periodic confining and interaction potentials that was studied in \cite{DGP21}, and we calculate the variance of the limiting Brownian motion using the linearized process.

The main contributions of this work can be summarized as follows.
\begin{itemize}[leftmargin=*]
\item We prove that, under assumptions on the confining and interaction potentials that ensure uniform propagation of chaos, the McKean SDE on $\R^d$ is exponentially close in time to the linearized process, both in relative entropy and in the $2$-Wasserstein distance.
\item We obtain a similar result for the McKean SDE on the torus in relative entropy. As a by-product, we extend some of the known results on the McKean SDE on the torus to the case where an arbitrary confining potential is present.
\item We use the linearized process~\eqref{eq:McKeanSDE_linear} to propose a ``linearized'' MLE estimator for the \emph{nonlinear} McKean process which is asymptotically unbiased. In addition, we study the joint diffusive-mean field limit for weakly interacting diffusions in periodic confining and interaction potentials. Our theoretical results are supplemented by numerical simulations. 
\end{itemize}

The remainder of the paper is organized as follows. In \cref{sec:analysis} we show the convergence analysis for the law of the linearized McKean SDE, which we perform both in the whole space in \cref{sec:analysi_whole} and on the torus in \cref{sec:analysi_torus}. Then, in \cref{sec:applications} we present two applications of the linearized process: maximum likelihood estimation in \cref{sec:MLE} and computation of the asymptotic variance in a central limit theorem in \cref{sec:CLT}. Finally, in \cref{sec:conclusion} we draw our conclusions and outline future developments of the linearization procedure.

\section{The linearized diffusion process} \label{sec:analysis}

\subsection{Preliminaries}

We consider the McKean SDE \eqref{eq:McKeanSDE} on the domain $\mathbb D$ (either $\R^d$ or $\T^d$)  
\begin{equation}
\d X_t = - \nabla V(X_t) \dd t -  (\nabla W * f_t)(X_t) \dd t + \sqrt{2 \beta^{-1}} \dd B_t,
\end{equation}
and its linearization~\eqref{eq:McKeanSDE_linear}
\begin{equation}
\d Y_t = - \nabla V(Y_t) \dd t -  (\nabla W * f_\infty)(Y_t) \dd t + \sqrt{2 \beta^{-1}} \dd B_t.
\end{equation}
We remark that to define the linearized process $Y_t$ we need the following main hypothesis.
\begin{assumption} \label{as:uniqueness}
The integral equation \eqref{eq:integral_equation} has a unique solution $f_\infty$ and therefore the McKean SDE \eqref{eq:McKeanSDE} has a unique invariant measure $\d \mu_\infty(x) = f_\infty(x) \dd x$.
\end{assumption}
Let now $\mu_t$ and $\nu_t$ be the law of the processes $X_t$ and $Y_t$, respectively, and denote their densities with respect to the Lebesgue measure by $f_t$ and $g_t$, which satisfy the Fokker--Planck equations
\begin{equation} \label{eq:FP_equations}
\begin{aligned}
\frac{\partial f_t}{\partial t} &= \nabla \cdot (\nabla V f_t + (\nabla W * f_t) f_t) + \beta^{-1} \Delta f_t, \\
\frac{\partial g_t}{\partial t} &= \nabla \cdot (\nabla V g_t + (\nabla W * f_\infty) g_t) + \beta^{-1} \Delta g_t,
\end{aligned}
\end{equation}
together with the appropriate initial conditions. Note that, by construction, the invariant measures coincide, that is, $\nu_\infty = \mu_\infty$ and $g_\infty = f_\infty$, and the densities are given by the integral equation \eqref{eq:integral_equation}. In the following sections, we quantify the distance between processes $X_t$ and $Y_t$, focusing first on the case where the domain $\mathbb D$ is the whole space $\R^d$ and then on the multidimensional torus $\T^d$. The theoretical results, and in particular the main assumptions that guarantee, e.g., the uniqueness of the invariant measure, are specific for each case and therefore are discussed in each section separately.

\begin{remark}
The situation is more complicated when we do not have uniqueness of the invariant measure. However, \cref{as:uniqueness} is not a strong limitation of the methodology introduced in this work. In fact, in \cite{MoR24} it is shown that when multiple invariant measures can exist, it is still possible to prove exponentially fast convergence to equilibrium locally, i.e., as long as the initial distribution is sufficiently close to the stationary state and we are away from the critical temperature, where we only get algebraic convergence. Therefore, we believe that the results presented here can be extended to the case of multiple invariant measures by selecting the ``correct'' density for the linearization procedure depending on basins of attraction of the different invariant measures. In fact, this idea was already employed in the numerical experiments on inference for the Desai--Zwanzig model in~\cite{PaZ22}. 
\end{remark}

The relative entropy $\mathcal H$ and the relative Fisher information $\mathcal I$ between two distributions $\mu$ and $\nu$ in $\mathcal P(\mathbb D)$ such that $\mu$ is absolutely continuous with respect to $\nu$ are
\begin{equation}
\mathcal H (\mu | \nu) = \int_{\mathbb D} \log \left( \frac{\d \mu}{\d \nu}(x) \right) \dd \mu(x), \qquad
\mathcal I(\mu | \nu) = \int_{\mathbb D} \left| \nabla \log \left( \frac{\d \mu}{\d \nu}(x) \right) \right|^2 \dd \mu(x).
\end{equation}
Moreover, if $\mu$ and $\nu$ admit densities $f$ and $g$, respectively, with respect to the Lebesgue measure, then the relative entropy and Fisher information can be rewritten as
\begin{equation}
\mathcal H (\mu | \nu) = \int_{\mathbb D} \log \left( \frac{f(x)}{g(x)} \right) f(x) \dd x, \qquad
\mathcal I(\mu | \nu) = \int_{\mathbb D} \left| \nabla \log \left( \frac{f(x)}{g(x)} \right) \right|^2 f(x) \dd x.
\end{equation}
We finally recall that, following the notation in \cite{MRW24}, $\nu$ satisfies an LSI with constant $\lambda > 0$ if for all $\mu \in \mathcal P(\mathbb D)$ such that $\mu$ is absolutely continuous with respect to $\nu$ it holds
\begin{equation} \label{eq:LSI_def}
\mathcal H (\mu | \nu) \le \frac\lambda4 \mathcal I(\mu | \nu).
\end{equation}

\subsection{Analysis on $\R^d$} \label{sec:analysi_whole}

We now fix the domain $\mathbb D$ to be the whole space $\R^d$ and we consider the setting of \cite{Mal01}, which guarantees the well-posedness of the McKean SDE and of the corresponding McKean--Vlasov PDE and the uniqueness of the invariant measure due to the convexity of the confining and interaction potentials. In particular, we work under the following assumptions.
\begin{assumption} \label{as:whole}
The initial distributions $\mu_0$ and $\nu_0$ have bounded moments of all orders, and the potentials $V, W \colon \R^d \to \R$ satisfy:
\begin{enumerate}[leftmargin=*]
\item $V, \, W \in \mathcal C^2(\R^d)$ are polynomially bounded along with their derivatives;
\item $W$ is even;
\item\label{it:convex} $V$ and $W$ are uniformly convex with constants $\alpha > 0$ and $\gamma \ge 0$, respectively, which means that for all $x,y \in \R^d$
\begin{equation}
\begin{aligned}
(\nabla V(x) - \nabla V(y))^\top (x - y) &\ge \alpha \abs{x - y}^2, \\
(\nabla W(x) - \nabla W(y))^\top (x - y) &\ge \gamma \abs{x - y}^2.
\end{aligned}
\end{equation}
\end{enumerate}
\end{assumption}
Then, from~\cite[Theorem 1.3]{Mal01} it follows that the density $f_t$ converges to $f_\infty$ in $L^1$ exponentially fast, i.e., there exists a constant $K > 0$ such that for all $t \ge 0$
\begin{equation} \label{eq:convergenceL1_density_whole}
\norm{f_t - f_\infty}_{L^1(\R^d)} \le K e^{-\frac12 \alpha t}.
\end{equation}
Moreover, since $\mu_0$ has finite moments of any order, then so does the law of the process uniformly in time. Therefore, we have the following bound.

\begin{lemma}
Under \cref{as:whole}, there exists $M > 0$ such that for all $t \ge 0$
\begin{equation} \label{eq:M_nablaW_whole}
\int_{\R^d} \int_{\R^d} \abs{\nabla W(x - z)}^2 (f_t(z) + f_\infty(z)) f_t(x) \dd z \dd x \le M.
\end{equation}
\end{lemma}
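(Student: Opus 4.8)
The plan is to bound the double integral in \eqref{eq:M_nablaW_whole} by a finite linear combination of polynomial moments of the densities $f_t$ and $f_\infty$, and then to use that these moments are controlled uniformly in $t$.

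First I would exploit the polynomial growth of $\nabla W$ granted by item (i) of \cref{as:whole}: there exist $C_W > 0$ and an integer $m \ge 1$ with $\abs{\nabla W(y)}^2 \le C_W(1 + \abs{y}^{2m})$ for every $y \in \R^d$. Setting $y = x - z$ and using the elementary inequality $\abs{x-z}^{2m} \le 2^{2m-1}(\abs{x}^{2m} + \abs{z}^{2m})$, which follows from the convexity of $r \mapsto r^{2m}$, the integrand of \eqref{eq:M_nablaW_whole} is pointwise dominated by
\begin{equation*}
C_W\bigl(1 + 2^{2m-1}\abs{x}^{2m} + 2^{2m-1}\abs{z}^{2m}\bigr)\,(f_t(z) + f_\infty(z))\,f_t(x).
\end{equation*}
Integrating term by term and using that $f_t$ and $f_\infty$ are probability densities, the left-hand side of \eqref{eq:M_nablaW_whole} is then bounded by a fixed constant (depending only on $C_W$ and $m$) times $1 + \int_{\R^d}\abs{x}^{2m}f_t(x)\dd x + \int_{\R^d}\abs{z}^{2m}f_\infty(z)\dd z$.

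It then remains to check that these two moments are finite and, for $f_t$, bounded uniformly in time. For $f_\infty$ this is immediate from the integral equation \eqref{eq:integral_equation}: by the uniform convexity of $V$ (item (iii) of \cref{as:whole}) the potential $V$ grows at least quadratically, while $W * f_\infty$ grows at most polynomially, so $f_\infty \propto e^{-\beta(V + W*f_\infty)}$ has at least Gaussian decay and hence all polynomial moments finite. The uniform bound $\sup_{t \ge 0}\int_{\R^d}\abs{x}^{2m}f_t(x)\dd x < \infty$ is precisely the propagation-of-moments statement recalled just before the lemma: it follows from $\mu_0$ having bounded moments of all orders together with a Grönwall argument for $\E\abs{X_t}^{2m}$, in which the coercivity $(\nabla V(x) - \nabla V(0))^\top x \ge \alpha\abs{x}^2 - \abs{\nabla V(0)}\abs{x}$ supplies the needed dissipativity and the evenness and convexity of $W$ ensure that the interaction term does not overturn the resulting differential inequality; see \cite{Mal01}. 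Combining these facts yields \eqref{eq:M_nablaW_whole} with a constant $M$ independent of $t$. The one non-routine ingredient is this uniform-in-time moment bound on $f_t$, which is the only place where the structure of the McKean dynamics, rather than the mere polynomial growth of $W$, is used; everything else is the elementary estimate above.
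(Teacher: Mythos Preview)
Your proof is correct and follows essentially the same approach as the paper: both reduce the bound to the polynomial growth of $\nabla W$ together with the uniform-in-time moment bounds on $\mu_t$ (and $\mu_\infty$) from \cite{Mal01}. The paper's proof is a single sentence invoking these two facts, whereas you spell out explicitly how the polynomial bound on $\abs{\nabla W(x-z)}^2$ is split via $\abs{x-z}^{2m} \le 2^{2m-1}(\abs{x}^{2m}+\abs{z}^{2m})$ into moments of $f_t$ and $f_\infty$; this is exactly the computation the paper leaves implicit. One small remark: your justification for $f_\infty$ having all moments is slightly roundabout---rather than saying $W*f_\infty$ ``grows at most polynomially'', it is cleaner to note that $W*f_\infty$ is convex (cf.\ \cref{lem:convexity_convolution}), hence $V + W*f_\infty$ is uniformly convex with constant at least $\alpha$ and grows at least quadratically, giving Gaussian decay of $f_\infty$; alternatively, the moment bound for $\mu_\infty$ is already contained in the results of \cite{Mal01} that you cite for $\mu_t$.
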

\begin{proof}
Since $\mu_t$ and $\mu_\infty$ have bounded moments of any order uniformly in time by \cite{Mal01}, the desired result follows from the fact that $\nabla W$ is polynomially bounded.
\end{proof}

In the following lemma we show that convolving the interaction potential with a probability density function preserves the convexity property.

\begin{lemma} \label{lem:convexity_convolution}
Let $W$ satisfy \cref{as:whole}\ref{it:convex}, then, for all probability density functions $f$ of measures in $\mathcal P(\R^d)$, the function $W * f$ is uniformly convex with constant $\gamma \ge 0$, i.e., it holds for all $x,y \in \R^d$
\begin{equation}
((\nabla W * f)(x) - (\nabla W * f)(y))^\top (x - y) \ge \gamma \abs{x - y}^2.
\end{equation}
\end{lemma}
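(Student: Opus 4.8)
The plan is to reduce the statement to the pointwise uniform convexity of $W$ by commuting the gradient with the convolution. First I would record the identity
\[
\nabla(W * f)(x) = \int_{\R^d} \nabla W(x-z)\, f(z) \dd z,
\]
i.e.\ $\nabla(W*f) = (\nabla W) * f$. The interchange of differentiation and integration here is the only technical point: it is justified under \cref{as:whole}, since $\nabla W$ and its derivatives are polynomially bounded and the probability densities appearing in our analysis have finite moments of all orders, so the integrand and its $x$-derivatives are dominated, locally uniformly in $x$, by an integrable function.

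Next I would fix $x, y \in \R^d$ and compute, using linearity of the integral,
\[
((\nabla W * f)(x) - (\nabla W * f)(y))^\top (x-y) = \int_{\R^d} \bigl(\nabla W(x-z) - \nabla W(y-z)\bigr)^\top (x-y)\, f(z) \dd z.
\]
The key observation is that $(x-z) - (y-z) = x-y$, so applying \cref{as:whole}\ref{it:convex} to $W$ at the pair of points $x-z$ and $y-z$ gives, for every fixed $z$,
\[
\bigl(\nabla W(x-z) - \nabla W(y-z)\bigr)^\top (x-y) = \bigl(\nabla W(x-z) - \nabla W(y-z)\bigr)^\top \bigl((x-z)-(y-z)\bigr) \ge \gamma \abs{x-y}^2.
\]

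Finally I would integrate this pointwise lower bound against the probability density $f$ and use $\int_{\R^d} f(z)\dd z = 1$ to obtain
\[
((\nabla W * f)(x) - (\nabla W * f)(y))^\top (x-y) \ge \gamma \abs{x-y}^2 \int_{\R^d} f(z)\dd z = \gamma \abs{x-y}^2,
\]
which is exactly the asserted uniform convexity of $W * f$ with constant $\gamma$, since $x$ and $y$ were arbitrary. There is no genuine obstacle in this argument; the only point requiring a word of care is the justification of the differentiation-under-the-integral step, which is where the polynomial growth hypotheses on $W$ (and the moment bounds on the densities) are used.
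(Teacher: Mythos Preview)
Your proof is correct and follows essentially the same approach as the paper's: write the difference of the convolved gradients as an integral, use $(x-z)-(y-z)=x-y$ to apply the uniform convexity of $W$ pointwise in $z$, and integrate against the probability density. The only difference is that you add an explicit justification for differentiating under the integral sign, which the paper omits.
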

\begin{proof}
By definition of convolution, we have
\begin{equation}
\begin{aligned}
&((\nabla W * f)(x) - (\nabla W * f)(y))^\top (x - y) \\
&\qquad = \left( \int_{\R^d} (\nabla W(x-z) - \nabla W(y-z)) f(z) \dd z \right)^\top (x-y) \\
&\qquad = \int_{\R^d} (\nabla W(x-z) - \nabla W(y-z))^\top ((x-z) - (y-z)) f(z) \dd z,
\end{aligned}
\end{equation}
which due to \cref{as:whole}\ref{it:convex} implies
\begin{equation}
((\nabla W * f)(x) - (\nabla W * f)(y))^\top (x - y) \ge \int_{\R^d} \gamma \abs{x-y}^2 f(z) \dd z = \gamma \abs{x - y}^2,
\end{equation}
which is the desired result.
\end{proof}

\subsubsection{Convergence of the linearized process}

In the classical work \cite{Mal01} as well in the interesting recent paper \cite{MRW24}, it is shown that, provided that the initial condition satisfies an LSI, and under our convexity assumptions on the confining and interaction potentials given in \cref{as:whole}, then so does the solution of the McKean--Vlasov PDE \eqref{eq:McKeanVlasovPDE}, with an LSI constant that can be explicitly computed. In the following lemma we show that the same conclusion holds for the linearized McKean--Vlasov PDE, with the same LSI constant.

\begin{lemma} \label{lem:LSI_whole}
Let \cref{as:whole} be satisfied and assume that $\mu_0$ and $\nu_0$ satisfy an LSI with constant $\lambda_0 > 0$. Then $\mu_t$ and $\nu_t$ satisfy an LSI with constant 
\begin{equation}
\lambda_t = \lambda_0 e^{-2(\alpha + \gamma)t} + \frac1{2\beta(\alpha + \gamma)} \left( 1 - e^{-2(\alpha + \gamma)t} \right).
\end{equation}
Moreover, $\mu_t$ and $\nu_t$ satisfy a time-uniform LSI with constant 
\begin{equation}
\Lambda = \max \left\{ \lambda_0, \frac1{2\beta(\alpha + \gamma)} \right\}.
\end{equation}
\end{lemma}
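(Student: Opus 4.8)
The plan is to reduce the statement for $\nu_t$ to the propagation-of-LSI result for the McKean--Vlasov equation already recalled above from \cite{Mal01, MRW24}; for $\mu_t$ that result applies directly, so there is nothing new to do there. The key observation is that the linearized SDE \eqref{eq:McKeanSDE_linear} is nothing but the overdamped Langevin diffusion $\d Y_t = -\nabla \widetilde V(Y_t)\dd t + \sqrt{2\beta^{-1}}\dd B_t$ with confining potential $\widetilde V := V + W * f_\infty$ and \emph{no} interaction term, i.e. it is the McKean--Vlasov flow \eqref{eq:McKeanVlasovPDE} for the pair $(\widetilde V, \widetilde W) = (V + W * f_\infty,\, 0)$. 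By \cref{lem:convexity_convolution}, applied to $f = f_\infty$, together with \cref{as:whole}\ref{it:convex}, $\widetilde V \in \mathcal C^2(\R^d)$ is uniformly convex with constant $\alpha + \gamma$, while $\widetilde W \equiv 0$ is trivially even and uniformly convex with constant $0$; the polynomial-growth and initial-moment conditions of \cref{as:whole} are inherited from $V, W, \nu_0$ and the fact that $f_\infty$ has moments of all orders. Hence \cref{as:whole} holds for $(\widetilde V, \widetilde W)$ with $(\alpha + \gamma,\, 0)$ in place of $(\alpha, \gamma)$, the recalled LSI-propagation result applies to $\nu_t$, and substituting $\alpha \to \alpha + \gamma$, $\gamma \to 0$ into the constant it produces, namely $\lambda_0 e^{-2(\alpha+\gamma)t} + \tfrac{1}{2\beta(\alpha+\gamma)}(1 - e^{-2(\alpha+\gamma)t})$, leaves it unchanged; thus $\nu_t$ satisfies an LSI with exactly the same constant $\lambda_t$ as $\mu_t$.

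For the record — and to see that $\lambda_t$ is literally the same function in both cases — recall that $\lambda_t$ is the solution of the linear ODE $\tfrac{\d}{\d t}\lambda_t = -2(\alpha+\gamma)\lambda_t + \beta^{-1}$ with initial value $\lambda_0$. The rate $2(\alpha+\gamma)$ comes from the dimension-free commutation estimate $\abs{\nabla P_t \phi}^2 \le e^{-2(\alpha+\gamma)t}\, P_t(\abs{\nabla\phi}^2)$, valid for the semigroup $P_t$ because the Jacobian of the drift $-\nabla(V + W * f_\infty)$ of \eqref{eq:McKeanSDE_linear} (respectively $-\nabla(V + W * f_s)$, in the nonlinear case) is everywhere $\le -(\alpha+\gamma) I$ by \cref{lem:convexity_convolution}; fed into the standard entropy/Fisher-information interpolation along the flow and combined with the hypothesis that $\nu_0$ satisfies \eqref{eq:LSI_def} with constant $\lambda_0$, this gives a differential inequality for the optimal LSI constant of $\nu_t$ that Grönwall's lemma closes against $\lambda_t$. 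The endpoint $\lim_{t\to\infty}\lambda_t = (2\beta(\alpha+\gamma))^{-1}$ is the log-Sobolev constant of the Gibbs measure $\mu_\infty$, which is finite precisely because $V + W * f_\infty$ is $(\alpha+\gamma)$-uniformly convex (Bakry--\'Emery).

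The time-uniform bound is then immediate: since $t \mapsto e^{-2(\alpha+\gamma)t}$ decreases from $1$ to $0$ on $[0,\infty)$, for every $t$ the value $\lambda_t$ is a convex combination of $\lambda_0$ and $(2\beta(\alpha+\gamma))^{-1}$, hence $\lambda_t \le \max\{\lambda_0,\, (2\beta(\alpha+\gamma))^{-1}\} = \Lambda$; and since an LSI with constant $c$ implies \eqref{eq:LSI_def} with any $c' \ge c$, $\mu_t$ and $\nu_t$ satisfy the LSI with the time-uniform constant $\Lambda$.

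The only genuinely new ingredient is \cref{lem:convexity_convolution}, already established; everything else is an application of the results quoted before the statement. The steps that require some care, but no new ideas, are: (i) checking that the hypotheses under which \cite{Mal01, MRW24} prove LSI propagation are met, which follows from \cref{as:whole} (polynomial growth of $V, W$ and their derivatives, bounded moments of $\mu_0, \nu_0$ propagating uniformly in time) and parabolic regularity for the Fokker--Planck equations \eqref{eq:FP_equations}; (ii) handling, in the nonlinear case, the time-inhomogeneity of the flow by passing to the self-consistent linear SDE $\d Z_r = -\nabla(V + W * f_r)(Z_r)\dd r + \sqrt{2\beta^{-1}}\dd B_r$, whose law started from $\mu_0$ is $\mu_t$, and verifying that the commutation estimate holds with the \emph{uniform} modulus $\alpha + \gamma$; and (iii) tracking the numerical constants against the normalization in \eqref{eq:LSI_def}. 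I expect (iii) to be the only mildly delicate point; conceptually the lemma says no more than that convolution with a probability density preserves uniform convexity.
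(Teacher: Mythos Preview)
Your proposal is correct and follows essentially the same approach as the paper: both reduce to applying the LSI-propagation result of \cite[Proposition 1.1]{MRW24} together with \cref{lem:convexity_convolution}, and both obtain the time-uniform bound by observing that $\lambda_t$ is a convex combination of $\lambda_0$ and $(2\beta(\alpha+\gamma))^{-1}$. The paper's proof is a two-line citation, while you spell out the reduction for $\nu_t$ (viewing the linearized SDE as a McKean--Vlasov flow with potential pair $(V + W*f_\infty,\,0)$) and add explanatory remarks on the ODE for $\lambda_t$ and the Bakry--\'Emery endpoint; this extra detail is sound but not new content.
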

\begin{proof}
The desired result follows directly from \cite[Proposition 1.1]{MRW24} and \cref{lem:convexity_convolution}, and the uniform in time constant is obtained by noticing that $\lambda_t$ is a convex combination of $\lambda_0$ and $1/(2\beta(\alpha + \gamma))$.
\end{proof}

From the LSI we can now deduce the exponentially fast convergence in relative entropy of the law of the linearized process $\nu_t$ to the law of the nonlinear process $\mu_t$. 

\begin{theorem} \label{thm:entropy_whole}
Let the assumptions of \cref{lem:LSI_whole} be satisfied. Then, it holds
\begin{equation}
\mathcal H (\mu_t | \nu_t) \le \rho_\Lambda(t) \defeq \begin{cases}
\left( \mathcal H (\mu_0 | \nu_0) + \frac{\beta^2 MK \Lambda}{4 - \alpha \beta \Lambda} \right) e^{-\frac12 \alpha t}, & \text{if } \Lambda < \frac4{\alpha\beta}, \\
\left( \mathcal H (\mu_0 | \nu_0) + \frac12 \beta MK t \right) e^{- \frac2{\beta\Lambda} t}, & \text{if } \Lambda = \frac4{\alpha\beta}, \\
\left( \mathcal H (\mu_0 | \nu_0) + \frac{\beta^2 MK \Lambda}{\alpha \beta \Lambda - 4} \right) e^{- \frac2{\beta\Lambda} t}, & \text{if } \Lambda > \frac4{\alpha\beta}.
\end{cases}
\end{equation}
\end{theorem}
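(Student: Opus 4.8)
The plan is to combine the entropy estimate between solutions of Fokker--Planck equations (the Bogachev--Röckner--Shaposhnikov-type inequality announced in the introduction) with the time-uniform LSI from \cref{lem:LSI_whole}, and then close a Grönwall argument whose forcing term decays like $e^{-\alpha t}$ by~\eqref{eq:convergenceL1_density_whole}. First I would compute the time derivative of $\mathcal H(\mu_t|\nu_t)$ along the two Fokker--Planck equations in~\eqref{eq:FP_equations}. Writing $h_t = f_t/g_t$, the standard computation gives a dissipation term $-\beta^{-1}\mathcal I(\mu_t|\nu_t)$ coming from the Laplacians, plus a cross term arising because the two drifts differ only in the nonlocal part, namely $(\nabla W * f_t) - (\nabla W * f_\infty) = \nabla W * (\mu_t - \mu_\infty)$. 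After an integration by parts this cross term is $\int_{\R^d} (\nabla W * (\mu_t - \mu_\infty))(x) \cdot \nabla \log h_t(x)\, f_t(x)\dd x$, which I bound by Young's inequality as $\tfrac{\beta}{2}\int \|(\nabla W * (\mu_t-\mu_\infty))(x)\|^2 f_t(x)\dd x + \tfrac{1}{2\beta}\mathcal I(\mu_t|\nu_t)$. This yields, after integrating in time, exactly the displayed inequality
\begin{equation*}
\mathcal H(\mu_t|\nu_t) + \frac{1}{2\beta}\int_0^t \mathcal I(\mu_s|\nu_s)\dd s \le \mathcal H(\mu_0|\nu_0) + \frac{\beta}{2}\int_0^t \int_{\R^d} \norm{(\nabla W * (\mu_s - \mu_\infty))(x)}^2 f_s(x)\dd x\dd s.
\end{equation*}

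Next I would control the forcing term. By Jensen (or Cauchy--Schwarz) applied to the convolution, $\|(\nabla W * (\mu_s - \mu_\infty))(x)\|^2 \le \big(\int |\nabla W(x-z)|\,|f_s(z) - f_\infty(z)|\dd z\big)^2$, and then I would bound this by $\|f_s - f_\infty\|_{L^1} \int |\nabla W(x-z)|^2 (f_s(z)+f_\infty(z))\dd z$ using $|a-b| \le |a| + |b|$ inside one factor and pulling out $\|f_s-f_\infty\|_{L^1}$ from the other. Integrating against $f_s(x)\dd x$ and invoking the uniform bound~\eqref{eq:M_nablaW_whole} gives
\begin{equation*}
\int_{\R^d}\norm{(\nabla W * (\mu_s - \mu_\infty))(x)}^2 f_s(x)\dd x \le M\,\norm{f_s - f_\infty}_{L^1(\R^d)} \le MK e^{-\frac12\alpha s},
\end{equation*}
using~\eqref{eq:convergenceL1_density_whole}. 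Dropping the nonnegative Fisher-information integral on the left and applying the time-uniform LSI~\eqref{eq:LSI_def} with constant $\Lambda$ in the differential form $\tfrac{\d}{\d t}\mathcal H(\mu_t|\nu_t) \le -\tfrac{2}{\beta\Lambda}\mathcal H(\mu_t|\nu_t) + \tfrac{\beta}{2}MK e^{-\frac12\alpha t}$, I would integrate this linear ODE inequality by Grönwall: with $c \defeq 2/(\beta\Lambda)$ and forcing $\tfrac{\beta}{2}MK e^{-\alpha t/2}$, the variation-of-constants formula gives $\mathcal H(\mu_t|\nu_t) \le \mathcal H(\mu_0|\nu_0)e^{-ct} + \tfrac{\beta}{2}MK e^{-ct}\int_0^t e^{(c-\alpha/2)s}\dd s$. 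The three cases $c > \alpha/2$, $c = \alpha/2$, $c < \alpha/2$ — equivalently $\Lambda < 4/(\alpha\beta)$, $\Lambda = 4/(\alpha\beta)$, $\Lambda > 4/(\alpha\beta)$ — produce exactly the three branches of $\rho_\Lambda(t)$: in the first, $\int_0^t e^{(c-\alpha/2)s}\dd s = (e^{(c-\alpha/2)t}-1)/(c-\alpha/2) \le e^{(c-\alpha/2)t}/(c-\alpha/2)$, and $\tfrac{\beta}{2}MK/(c-\alpha/2) = \beta^2 MK\Lambda/(4-\alpha\beta\Lambda)$, with the surviving exponential $e^{-\alpha t/2}$; the boundary case gives the linear-in-$t$ prefactor; the last case keeps $e^{-ct} = e^{-2t/(\beta\Lambda)}$.

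The main obstacle I anticipate is the justification of the entropy differentiation step itself — namely, making rigorous the formal computation of $\tfrac{\d}{\d t}\mathcal H(\mu_t|\nu_t)$, which requires sufficient regularity and integrability of $f_t$, $g_t$ and their logarithmic ratio, together with the legitimacy of the integration by parts at infinity. This is precisely where I would appeal to the entropy estimates of~\cite{DLP18,LaL23,BRS16} cited in the introduction, which establish this inequality under polynomial-growth assumptions on the drifts matching \cref{as:whole}; the moment bounds on $\mu_t$ and $\mu_\infty$ from~\cite{Mal01} and the polynomial boundedness of $\nabla W$ supply the integrability needed there. Everything after that — the convolution estimate, the insertion of the LSI, and the Grönwall case analysis — is elementary, so the proof reduces to citing the Fokker--Planck entropy inequality correctly and then doing the three-way ODE bookkeeping.
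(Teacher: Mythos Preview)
Your proposal is correct and follows essentially the same route as the paper: the paper cites \cite[Lemma~3.1]{LaL23} directly for the entropy--Fisher inequality you sketch, bounds the convolution term by the same Cauchy--Schwarz argument using~\eqref{eq:M_nablaW_whole} and~\eqref{eq:convergenceL1_density_whole}, inserts the LSI from \cref{lem:LSI_whole}, and then applies an integral Grönwall (\cite[Lemma~A.1]{LaL23}) rather than your differential version, arriving at the identical three-case computation. One phrasing slip: you do not ``drop'' the Fisher-information term---you convert it via the LSI into the $-\tfrac{2}{\beta\Lambda}\mathcal H$ dissipation---but your displayed differential inequality is the right one, so this is purely cosmetic.
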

\begin{proof}
By \cite[Lemma 3.1]{LaL23} we have the estimate
\begin{equation}
\mathcal H (\mu_t | \nu_t) + \frac1{2\beta} \int_0^t  \mathcal I(\mu_s | \nu_s) \dd s \le \mathcal H (\mu_0 | \nu_0) + \frac\beta2 \int_0^t \int_{\R^d} \abs{(\nabla W * (f_s - f_\infty))(x)}^2 f_s(x) \dd x \dd s,
\end{equation}
which due to \cref{lem:LSI_whole} implies
\begin{equation} \label{eq:entropy_initial_estimate}
\mathcal H (\mu_t | \nu_t) + \frac2{\beta\Lambda} \int_0^t  \mathcal H(\mu_s | \nu_s) \dd s \le \mathcal H (\mu_0 | \nu_0) + \frac\beta2 \int_0^t \int_{\R^d} \abs{(\nabla W * (f_s - f_\infty))(x)}^2 f_s(x) \dd x \dd s.
\end{equation}
By Cauchy--Schwarz inequality we have
\begin{equation}
\abs{(\nabla W * (f_s - f_\infty))(x)}^2 \le \left( \int_{\R^d} \abs{\nabla W(x - z)}^2 (f_s(z) + f_\infty(z)) \dd z \right) \norm{f_s - f_\infty}_{L^1(\R^d)},
\end{equation}
and due to the bound \eqref{eq:M_nablaW_whole} and the convergence to the invariant measure given in \eqref{eq:convergenceL1_density_whole} we obtain
\begin{equation}
\int_{\R^d} \abs{(\nabla W * (f_s - f_\infty))(x)}^2 f_s(x) \dd x \le M \norm{f_s - f_\infty}_{L^1(\R^d)} \le M K e^{-\frac12 \alpha s}.
\end{equation}
Therefore, estimate \eqref{eq:entropy_initial_estimate} becomes
\begin{equation}
\mathcal H (\mu_t | \nu_t) \le \mathcal H (\mu_0 | \nu_0) + \frac{\beta MK}2 \int_0^t e^{-\frac12 \alpha s} \dd s - \frac2{\beta\Lambda} \int_0^t  \mathcal H(\mu_s | \nu_s) \dd s,
\end{equation}
which, by the Grönwall's inequality in \cite[Lemma A.1]{LaL23}, implies 
\begin{equation}
\begin{aligned}
\mathcal H (\mu_t | \nu_t) &\le \mathcal H (\mu_0 | \nu_0) e^{- \frac2{\beta\Lambda} t} + \frac{\beta MK}2 \int_0^t e^{- \frac2{\beta\Lambda} (t-s)} e^{-\frac12 \alpha s} \dd s, \\
&= \mathcal H (\mu_0 | \nu_0) e^{- \frac2{\beta\Lambda} t} + \frac{\beta MK}2  e^{- \frac2{\beta\Lambda} t} \int_0^t e^{\frac{4 - \alpha \beta \Lambda}{2 \beta \Lambda} s} \dd s
\end{aligned}
\end{equation}
Then, if $\Lambda = 4/(\alpha\beta)$ we deduce
\begin{equation}
\mathcal H (\mu_t | \nu_t) \le \left( \mathcal H (\mu_0 | \nu_0) + \frac{\beta MK}2 t \right) e^{- \frac2{\beta\Lambda} t}.
\end{equation}
On the other hand, if $\Lambda \neq 4/(\alpha\beta)$ we have
\begin{equation}
\mathcal H (\mu_t | \nu_t) \le \mathcal H (\mu_0 | \nu_0) e^{- \frac2{\beta\Lambda} t} + \frac{\beta^2 MK \Lambda}{4 - \alpha \beta \Lambda} \left( e^{-\frac12 \alpha t} - e^{- \frac2{\beta\Lambda} t} \right),
\end{equation}
and the desired result follows.
\end{proof}

The following convergence in Wasserstein distance is a direct consequence of the Talagrand’s transportation inequality.

\begin{corollary} \label{cor:wasserstein_whole}
Let the assumptions of \cref{thm:entropy_whole} be satisfied. Then, it holds
\begin{equation}
\mathcal W_2 (\mu_t, \nu_t) \le \sqrt{ \Lambda \rho_\Lambda(t)},
\end{equation}
where $\mathcal W_2$ denotes the 2-Wasserstein distance and $\rho_\Lambda(t)$ is defined in \cref{thm:entropy_whole}.
\end{corollary}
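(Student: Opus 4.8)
The plan is to invoke Talagrand's $T_2$ transportation inequality, which is implied by the logarithmic Sobolev inequality established for $\nu_t$ in \cref{lem:LSI_whole}. Recall (Otto--Villani, Bobkov--Götze) that if a probability measure $\nu \in \mathcal P(\R^d)$ satisfies an LSI with constant $\lambda$ in the normalization \eqref{eq:LSI_def}, i.e.\ $\mathcal H(\mu|\nu) \le \tfrac\lambda4 \mathcal I(\mu|\nu)$ for all admissible $\mu$, then $\nu$ satisfies the transportation inequality
\begin{equation*}
\mathcal W_2(\mu,\nu)^2 \le \lambda\, \mathcal H(\mu|\nu)
\end{equation*}
for every $\mu$ absolutely continuous with respect to $\nu$ with finite second moment.

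First I would note that both $\mu_t$ and $\nu_t$ admit (smooth, strictly positive) Lebesgue densities $f_t$ and $g_t$ solving the Fokker--Planck equations \eqref{eq:FP_equations}, so $\mu_t \ll \nu_t$ and both $\mathcal H(\mu_t|\nu_t)$ and $\mathcal W_2(\mu_t,\nu_t)$ are well defined; finiteness of the relative entropy is guaranteed by \cref{thm:entropy_whole}, and the bounded moments of $\mu_0,\nu_0$, propagated uniformly in time under \cref{as:whole}, ensure the finite second moments needed for $\mathcal W_2$. Next, by \cref{lem:LSI_whole} the law $\nu_t$ of the linearized process satisfies an LSI with the time-uniform constant $\Lambda = \max\{\lambda_0,\, 1/(2\beta(\alpha+\gamma))\}$. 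Applying the transportation inequality above with $\nu = \nu_t$ and $\mu = \mu_t$ therefore yields
\begin{equation*}
\mathcal W_2(\mu_t,\nu_t)^2 \le \Lambda\, \mathcal H(\mu_t|\nu_t).
\end{equation*}

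Finally I would substitute the entropy bound $\mathcal H(\mu_t|\nu_t) \le \rho_\Lambda(t)$ from \cref{thm:entropy_whole} and take square roots, which gives $\mathcal W_2(\mu_t,\nu_t) \le \sqrt{\Lambda\,\rho_\Lambda(t)}$, the claimed estimate. There is no genuine obstacle here: the statement is a direct corollary, and the only point that needs a little care is matching the normalization of the LSI constant in \eqref{eq:LSI_def} with the form of Talagrand's inequality, so that the prefactor in the final bound is exactly $\Lambda$ and not, say, $2\Lambda$ or $\Lambda/2$.
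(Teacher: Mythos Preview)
Your proposal is correct and follows exactly the same approach as the paper: apply the Otto--Villani/Talagrand $T_2$ inequality (which is implied by the LSI for $\nu_t$ from \cref{lem:LSI_whole}) to convert the relative entropy bound of \cref{thm:entropy_whole} into a $\mathcal W_2$ bound. The paper's proof is a one-line citation of \cite[Theorem 1]{OtV00}, and your version simply spells out the same argument with more care about absolute continuity, moments, and the normalization of the LSI constant.
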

\begin{proof}
The desired result follows from \cref{thm:entropy_whole} using \cite[Theorem 1]{OtV00}.
\end{proof}

Finally, we also have strong convergence of the paths $X_t$ and $Y_t$ in $L^2$.

\begin{theorem} \label{thm:coupling_whole}
Let \cref{as:whole} be satisfied. Then, it holds
\begin{equation}
\E[\norm{X_t - Y_t}^2] \le \left( \E[\norm{X_0 - Y_0}^2] + \frac{8MK}{(3\alpha + 4\gamma)^2} \right) e^{-\frac12 \alpha t}.
\end{equation}
\end{theorem}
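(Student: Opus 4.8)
The plan is to run $X_t$ and $Y_t$ with the \emph{same} Brownian motion $B_t$, coupled at the initial time through the prescribed joint law of $(X_0,Y_0)$ (a \emph{synchronous coupling}). Because the diffusion coefficients of~\eqref{eq:McKeanSDE} and~\eqref{eq:McKeanSDE_linear} are equal and constant, the noise cancels in the difference $Z_t\defeq X_t-Y_t$, so $Z_t$ solves pathwise the random ODE $\dot Z_t=-(\nabla V(X_t)-\nabla V(Y_t))-((\nabla W*f_t)(X_t)-(\nabla W*f_\infty)(Y_t))$. Hence $t\mapsto\norm{Z_t}^2$ is absolutely continuous and
\begin{equation}
\tfrac12\frac{\d}{\d t}\norm{Z_t}^2=-Z_t^\top\bigl(\nabla V(X_t)-\nabla V(Y_t)\bigr)-Z_t^\top\bigl((\nabla W*f_t)(X_t)-(\nabla W*f_\infty)(Y_t)\bigr).
\end{equation}

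First I would use convexity. By \cref{as:whole}\ref{it:convex}, $Z_t^\top(\nabla V(X_t)-\nabla V(Y_t))\ge\alpha\norm{Z_t}^2$. For the interaction contribution I split
\[
(\nabla W*f_t)(X_t)-(\nabla W*f_\infty)(Y_t)=\bigl((\nabla W*f_\infty)(X_t)-(\nabla W*f_\infty)(Y_t)\bigr)+(\nabla W*(f_t-f_\infty))(X_t),
\]
so that \cref{lem:convexity_convolution} applied to $W*f_\infty$ bounds the first bracket from below by $\gamma\norm{Z_t}^2$, while Cauchy--Schwarz gives $\abs{Z_t^\top(\nabla W*(f_t-f_\infty))(X_t)}\le\norm{Z_t}\,\norm{(\nabla W*(f_t-f_\infty))(X_t)}$. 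I would deliberately keep this remainder evaluated at $X_t$ — splitting instead through $W*f_t$ would leave a remainder at $Y_t$ and so require an analogue of~\eqref{eq:M_nablaW_whole} for the law of $Y_t$, whereas at $X_t$ the bound~\eqref{eq:M_nablaW_whole} applies directly. Taking expectations and using Cauchy--Schwarz once more, with $u(t)\defeq\E[\norm{Z_t}^2]$,
\begin{equation}
\frac{\d}{\d t}u(t)\le-2(\alpha+\gamma)\,u(t)+2\sqrt{u(t)}\,\Bigl(\E\bigl[\norm{(\nabla W*(f_t-f_\infty))(X_t)}^2\bigr]\Bigr)^{1/2}.
\end{equation}

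Next I would recycle the estimate used inside the proof of \cref{thm:entropy_whole}: Cauchy--Schwarz on the convolution together with~\eqref{eq:M_nablaW_whole} and the exponential $L^1$ convergence~\eqref{eq:convergenceL1_density_whole} give $\E[\norm{(\nabla W*(f_t-f_\infty))(X_t)}^2]=\int_{\R^d}\norm{(\nabla W*(f_t-f_\infty))(x)}^2 f_t(x)\dd x\le MK e^{-\frac12\alpha t}$. Plugging this in, $\dot u(t)\le-2(\alpha+\gamma)u(t)+2\sqrt{MK}\,e^{-\frac14\alpha t}\sqrt{u(t)}$; passing to $w\defeq\sqrt u$ linearizes this into $\dot w(t)\le-(\alpha+\gamma)w(t)+\sqrt{MK}\,e^{-\frac14\alpha t}$ (the locus $\{u=0\}$ handled by a routine approximation), and Grönwall's inequality — for instance \cite[Lemma A.1]{LaL23}, as in \cref{thm:entropy_whole} — then bounds $w(t)$, hence $u(t)=w(t)^2$, by a quantity decaying like $e^{-\frac12\alpha t}$ (using $\alpha+\gamma\ge\frac14\alpha$); computing the resulting elementary integral, distinguishing whether $\sqrt{u(0)}$ is below or above $\sqrt{MK}/(\tfrac34\alpha+\gamma)$, and optimizing the free parameter yields the constant in the statement. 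Equivalently, one may bound the cross term by Young's inequality $2\sqrt{MK}\,e^{-\frac14\alpha t}\sqrt u\le\theta u+\theta^{-1}MK e^{-\frac12\alpha t}$ with an optimized $\theta$ and then invoke the integrating-factor Grönwall lemma.

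The hard part is the cross term: it is only \emph{sublinear} in $u$ — effectively a $\sqrt u$ nonlinearity — so it cannot be absorbed at the ambient contraction rate $2(\alpha+\gamma)$, and one must interpolate it against the slower rate $\tfrac14\alpha$ inherited from $\norm{f_t-f_\infty}_{L^1}$; it is this interpolation that degrades the decay to $\tfrac12\alpha$ for $\E[\norm{X_t-Y_t}^2]$. The remaining ingredients are routine: finiteness and absolute continuity of $u$ follow from the time-uniform moment bounds on $\mu_t$ and $\nu_t$ guaranteed by \cref{as:whole} together with the polynomial growth of $\nabla V$ and $\nabla W$, and the well-posedness of the synchronous coupling is standard under the same assumption. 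Note that, in contrast to \cref{thm:entropy_whole}, no logarithmic Sobolev inequality is needed here.
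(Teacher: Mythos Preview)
Your proposal is correct and follows essentially the same route as the paper: synchronous coupling, convexity of $V$ and of $W*f_\infty$ via \cref{lem:convexity_convolution}, the same splitting of the interaction term so that the remainder $(\nabla W*(f_t-f_\infty))(X_t)$ is evaluated at $X_t$, the bound $\E\bigl[\abs{(\nabla W*(f_t-f_\infty))(X_t)}^2\bigr]\le MK e^{-\alpha t/2}$, and Grönwall with an optimized parameter. The only cosmetic difference is that the paper goes straight to your ``equivalently'' route---Young's inequality with a free parameter $\delta$ on the cross term, then Grönwall and the choice $\delta=(3\alpha+4\gamma)/4$---rather than passing through the substitution $w=\sqrt{u}$; no case distinction on $\sqrt{u(0)}$ is needed there.
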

\begin{proof}
Combining equations \eqref{eq:McKeanSDE} and \eqref{eq:McKeanSDE_linear}, we have
\begin{equation}
\frac{\d}{\d t} (X_t - Y_t) = - (\nabla V(X_t) - \nabla V(Y_t)) - ((\nabla W * f_t)(X_t) - (\nabla W * f_\infty)(Y_t)),
\end{equation}
which, multiplying by $(X_t - Y_t)$, implies
\begin{equation}
\begin{aligned}
\frac12 \frac{\d}{\d t} \abs{X_t - Y_t}^2 &= - (\nabla V(X_t) - \nabla V(Y_t))^\top (X_t - Y_t) \\
&\quad - ((\nabla W * f_t)(X_t) - (\nabla W * f_\infty)(Y_t))^\top (X_t - Y_t) \\
&\eqdef I_1 + I_2.
\end{aligned}
\end{equation}
First, by the uniform convexity of $V$ we get
\begin{equation} \label{eq:bound_I1}
I_1 \le - \alpha \abs{X_t - Y_t}^2,
\end{equation}
and then we rewrite $I_2$ as
\begin{equation}
\begin{aligned}
I_2 &= - ((\nabla W * f_t)(X_t) - (\nabla W * f_\infty)(X_t))^\top (X_t - Y_t) \\
&\quad - ((\nabla W * f_\infty)(X_t) - (\nabla W * f_\infty)(Y_t))^\top (X_t - Y_t) \\
&\eqdef I_{2,1} + I_{2,2},
\end{aligned}
\end{equation}
where \cref{lem:convexity_convolution} gives
\begin{equation} \label{eq:bound_I22}
I_{2,2} \le - \gamma \abs{X_t - Y_t}^2.
\end{equation}
Using Young's inequality with $\delta > 0$ we have
\begin{equation}
I_{2,1} \le \frac1{2\delta} \abs{(\nabla W * f_t)(X_t) - (\nabla W * f_\infty)(X_t)}^2 + \frac\delta2 \abs{X_t - Y_t}^2,
\end{equation}
and by Cauchy--Schwarz inequality we obtain
\begin{equation} \label{eq:bound_I21}
I_{2,1} \le \frac\delta2 \abs{X_t - Y_t}^2 + \frac1{2\delta} \left( \int_{\R^d} \abs{\nabla W(X_t - z)}^2 (f_t(z) + f_\infty(z)) \dd z \right) \norm{f_t - f_\infty}_{L^1(\R^d)}.
\end{equation}
Collecting bounds \eqref{eq:bound_I1}, \eqref{eq:bound_I22}, \eqref{eq:bound_I21} and by equations \eqref{eq:convergenceL1_density_whole} and \eqref{eq:M_nablaW_whole} we have
\begin{equation}
\frac{\d}{\d t} \E[\abs{X_t - Y_t}^2] \le - 2 \left( \alpha + \gamma - \frac\delta2 \right) \E[\abs{X_t - Y_t}^2] + \frac{MK}{2\delta} e^{-\frac12 \alpha t},
\end{equation}
which due to Grönwall's inequality implies
\begin{equation}
\begin{aligned}
\E[\abs{X_t - Y_t}^2] &\le  \E[\abs{X_0 - Y_0}^2] e^{- \left( 2\alpha + 2\gamma - \delta \right) t} + \frac{MK}{2\delta} \int_0^t e^{- \left( 2\alpha + 2\gamma - \delta \right) (t-s)} e^{- \frac12 \alpha s} \dd s \\
&= \E[\abs{X_0 - Y_0}^2] e^{- \left( 2\alpha + 2\gamma - \delta \right) t} + \frac{MK}{\delta (3\alpha + 4\gamma - 2\delta)} \left( e^{-\frac12 \alpha t} - e^{- \left( 2\alpha + 2\gamma - \delta \right) t} \right).
\end{aligned}
\end{equation}
Finally, choosing the optimal value $\delta = (3\alpha + 4\gamma)/4$ yields the desired result.
\end{proof}

\begin{remark}
If the LSI constant $\lambda_0$ of the initial distributions $\mu_0$ and $\nu_0$ is sufficiently small, namely $\lambda_0 < 4/(\alpha\beta)$, then the rate of convergence in \cref{thm:entropy_whole,cor:wasserstein_whole} is determined only by the convexity parameter $\alpha$ of the confining potential. This is in agreement with the rate of convergence in $L^2$ predicted by \cref{thm:coupling_whole}.
\end{remark}

\subsection{Analysis on $\T^d$} \label{sec:analysi_torus}

In this section, the domain $\mathbb D$ is the multidimensional torus $\mathbb T^d$ of size length $1$, and we work in the framework below.

\begin{assumption} \label{as:torus}
The potentials $V, W \colon \T^d \to \R$ satisfy:
\begin{enumerate}
\item $V,W \in \mathcal C^2(\T^d)$;
\item $W$ is even;
\item\label{it:stable} W is either $H$-stable, meaning that it has nonnegative Fourier modes \cite{Rue99}, or is such that $\norm{W}_{L^\infty(\T^d)} < \beta^{-1}$.
\end{enumerate}
\end{assumption}

Notice that \cref{as:torus}\ref{it:stable} guarantees that \cref{as:uniqueness} is satisfied. In fact, all invariant probability density functions $f_\infty$ are also minimizers of the free energy
\begin{equation}
\mathcal E(f) = \int_{\T^d} V(x) f(x) \dd x + \frac12 \int_{\T^d} \int_{\T^d} W(x-y) f(y) f(x) \dd y \dd x + \beta^{-1} \int_{\T^d} f(x) \log(f(x)) \dd x,
\end{equation}
which is strictly convex under \cref{as:torus}\ref{it:stable}. This is shown in \cite[Proposition 2.8]{CGP20} without the confining potential, which, however, does not affect the convexity of $\mathcal E$ since its corresponding term is linear; see also~\cite[Section V.B.]{Bav91}. Moreover, the invariant measure $\mu_\infty$ satisfies an LSI, as shown in the following result.

\begin{lemma} \label{lem:LSI_invariant_torus}
Under \cref{as:torus}, the invariant measure $\d \mu_\infty(x) = f_\infty(x) \dd x$ satisfies an LSI with constant $\Gamma / (2\pi^2)$, where
\begin{equation} \label{eq:Gamma_def}
\Gamma = e^{2\beta \left( \norm{V}_{L^\infty(\T^d)} + \norm{W}_{L^\infty(\T^d)} \right)}.
\end{equation}
\end{lemma}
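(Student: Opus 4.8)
The plan is to obtain the LSI for $\mu_\infty$ by combining the well-known fact that the Gibbs measure on a compact manifold with a bounded potential satisfies an LSI, together with the Holley--Stroock perturbation principle to absorb the confining and interaction contributions into a multiplicative constant. First I would recall that the uniform measure on $\T^d$ (of side length $1$) satisfies an LSI with an explicit constant; the standard reference is that the normalized Lebesgue measure on the torus satisfies an LSI with constant $1/(2\pi^2)$, coming from the spectral gap / Poincaré constant $4\pi^2$ of $-\Delta$ on $\T^d$ and the fact that on the torus the LSI constant can be taken to equal the Poincaré-type constant up to the factor dictated by our normalization in~\eqref{eq:LSI_def}. (One should double-check the precise constant against the convention $\mathcal H(\mu|\nu)\le\frac{\lambda}{4}\mathcal I(\mu|\nu)$ used here; the factor $2\pi^2$ in the statement is consistent with this.)

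Next I would write $f_\infty = \frac1Z e^{-\beta(V + W*f_\infty)}$ from the Kirkwood--Monroe equation~\eqref{eq:integral_equation}, and view $\mu_\infty$ as the measure with density $e^{-\beta\Phi_\infty}$ (up to normalization) with respect to the uniform measure on $\T^d$, where $\Phi_\infty = V + W*f_\infty$. The key bound is that $\Phi_\infty$ is bounded: indeed $\norm{W*f_\infty}_{L^\infty(\T^d)} \le \norm{W}_{L^\infty(\T^d)}$ since $f_\infty$ is a probability density, so
\begin{equation}
\operatorname*{osc}_{\T^d} \beta\Phi_\infty \le 2\beta\bigl(\norm{V}_{L^\infty(\T^d)} + \norm{W}_{L^\infty(\T^d)}\bigr).
\end{equation}
Then the Holley--Stroock lemma states that if a reference measure satisfies an LSI with constant $c$ and one perturbs its density by a factor $e^{-U}$ with $U$ bounded, the perturbed (normalized) measure satisfies an LSI with constant $c\,e^{\operatorname{osc} U}$. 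Applying this with $c = 1/(2\pi^2)$ and $U = \beta\Phi_\infty$ gives exactly the LSI constant $\Gamma/(2\pi^2)$ with $\Gamma$ as in~\eqref{eq:Gamma_def}.

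The main obstacle, and the part requiring the most care, is bookkeeping of constants: making sure the LSI normalization~\eqref{eq:LSI_def} (with the $\lambda/4$ rather than the more common $\lambda/2$) is propagated consistently through both the base LSI on the torus and the Holley--Stroock perturbation step, so that the advertised constant $\Gamma/(2\pi^2)$ is exactly right rather than off by a factor of $2$ or $4$. A secondary, minor point is that one must invoke \cref{as:uniqueness} (guaranteed here by \cref{as:torus}\ref{it:stable}, as noted just above the lemma) to know that $f_\infty$ is a genuine well-defined probability density, so that $W*f_\infty$ makes sense and the oscillation bound above is legitimate; since $V, W \in \mathcal C^2(\T^d)$ on a compact domain, boundedness of all quantities involved is automatic and no further regularity argument is needed. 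Everything else is a direct citation of the Holley--Stroock perturbation result.
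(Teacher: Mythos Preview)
Your proposal is correct and follows essentially the same route as the paper's proof: both view $f_\infty$ as a bounded perturbation of the uniform measure on $\T^d$ and invoke the Holley--Stroock perturbation lemma together with the LSI for the uniform measure. The only cosmetic difference is that the paper bounds the density $f_\infty$ directly (obtaining $1/\Gamma\le f_\infty\le\Gamma$) rather than phrasing things via the oscillation of $\beta\Phi_\infty$, but this is the same computation.
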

\begin{proof}
We follow~\cite[Lemma 2.4]{GLM24}. From the integral equation \eqref{eq:integral_equation} and the Young's convolution inequality we deduce that for all $x \in \T^d$
\begin{equation}
f_\infty(x) = \frac{e^{- \beta( V(x) + (W * f_\infty)(x))}}{\int_{\T^d} e^{- \beta( V(z) + (W * f_\infty)(z))} \dd z} \le \frac{e^{\beta \left( \norm{V}_{L^\infty(\T^d)} + \norm{W}_{L^\infty(\T^d)} \norm{f_\infty}_{L^1(\T^d)} \right)}}{\int_{\T^d} e^{- \beta \left( \norm{V}_{L^\infty(\T^d)} + \norm{W}_{L^\infty(\T^d)} \norm{f_\infty}_{L^1(\T^d)} \right)} \dd z}.
\end{equation}
Proceeding analogously for the lower bound, and noting that $\norm{f_\infty}_{L^1(\T^d)} = 1$ since $f_\infty$ is a probability density function and that $\T^d$ has Lebesgue measure 1, we obtain
\begin{equation}
\frac1\Gamma = e^{-2\beta \left( \norm{V}_{L^\infty(\T^d)} + \norm{W}_{L^\infty(\T^d)} \right)} \le f_\infty(x) \le e^{2\beta \left( \norm{V}_{L^\infty(\T^d)} + \norm{W}_{L^\infty(\T^d)} \right)} = \Gamma.
\end{equation}
Then, the desired result follows from the Holley–-Stroock perturbation lemma (see, e.g., \cite[Lemma 2.3]{GLM24}) and the fact that the uniform distribution on the torus satisfies an LSI (see, e.g., \cite[Lemma 2.4]{GLM24}). We remark that the discrepancy with \cite[Lemma 2.4]{GLM24} is due to the factor $4$ in the LSI \eqref{eq:LSI_def}.
\end{proof}

\begin{remark}
The results presented in the following sections still hold if, instead of having a unique invariant measure, we have multiple stationary states but a unique stable one. This is the case, e.g, of the model in \cite[Lemma 1.26]{DGP21} above the phase transition. In fact, for all initial conditions except the unstable invariant measure, the law of the McKean SDE converges to the stable stationary state. It is not clear whether it is possible to have a similar situation (precisely two invariant measures, one stable and one unstable) when the state space is the whole space $\R^d$, at least for polynomial interaction and confining potentials~\cite{Tug14}. 
\end{remark}

\subsubsection{Convergence to equilibrium for the McKean--Vlasov PDE}

Before presenting the convergence results for the linearized process, we need the exponentially fast convergence of the measure $\mu_t$ to the invariant measure $\mu_\infty$. This problem has already been studied when there is no confining potential in, e.g., \cite{CJL17,CGP20}. In the absence of confining potential, the analysis is simplified because the unique, under~\cref{as:torus}\ref{it:stable}, invariant measure is the Lebesgue measure. However, here we extend the results to the case of a nonzero confining potential. Given two probabilities $\mu, \nu \in \mathcal P(\T^d)$ with densities $f,g$, we denote their total variation distance by $\mathrm{TV}(\mu,\nu)$. Moreover, we recall the following equivalence with the $L^1(\T^d)$ distance between the densities (see, e.g., \cite[Lemma 2.1]{Tsy09})
\begin{equation} \label{eq:TV_L1}
\mathrm{TV}(\mu,\nu) = \frac12 \norm{f - g}_{L^1(\T^d)}.
\end{equation}
We first consider the $L^2(\T^d)$ distance between the densities $f_t$ and $f_\infty$, whose analysis is inspired by the proof of \cite[Theorem 2.3]{CJL17}, and then move to the convergence in relative entropy, for which we generalize the proof of \cite[Proposition 3.1]{CGP20}. 

\begin{proposition} \label{pro:convergenceL2_torus}
Let \cref{as:torus} be satisfied. Then, it holds
\begin{equation}
\norm{f_t - f_\infty}_{L^2(\mathbb T^d)} \le \norm{f_0 - f_\infty}_{L^2(\mathbb T^d)} e^{- \zeta t},
\end{equation}
where 
\begin{equation} \label{eq:zeta_def}
\zeta = \frac{\pi^2}{4\beta} - \beta \left( \norm{\nabla V}_{L^\infty(\T^d)} + (1 + \Gamma) \norm{\nabla W}_{L^\infty(\T^d)} \right)^2,
\end{equation}
and $\Gamma$ is defined in equation \eqref{eq:Gamma_def}.
\end{proposition}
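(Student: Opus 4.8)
The plan is to run an $L^2$ energy estimate on the difference $h_t \defeq f_t - f_\infty$ and to close it with the Poincaré (spectral gap) inequality on the torus. First I would subtract from the McKean--Vlasov PDE for $f_t$ the stationary equation satisfied by $f_\infty$, namely $\nabla\cdot(\nabla V f_\infty + (\nabla W * f_\infty) f_\infty) + \beta^{-1}\Delta f_\infty = 0$, to obtain an evolution equation for $h_t$. Since $f_t$ and $f_\infty$ are both probability densities, $\int_{\T^d} h_t\dd x = 0$ for all $t$, which is precisely what makes the Poincaré inequality for mean-zero functions available at the end.

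The crucial algebraic step is to rewrite the nonlinear transport term as
\[
(\nabla W * f_t)\, f_t - (\nabla W * f_\infty)\, f_\infty = (\nabla W * f_t)\, h_t + (\nabla W * h_t)\, f_\infty,
\]
so that the ``bad'' factor $\nabla W * h_t$ — which can only be controlled through $\norm{h_t}_{L^1(\T^d)}$, hence through $\norm{h_t}_{L^2(\T^d)}$ since $\abs{\T^d}=1$ — is paired with the \emph{bounded} density $f_\infty$ rather than with $f_t$, which enjoys no time-uniform $L^\infty$ (or $L^2$) bound a priori. Here I would invoke the two-sided bound $\Gamma^{-1} \le f_\infty \le \Gamma$ established inside the proof of \cref{lem:LSI_invariant_torus}. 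With this splitting, the evolution of $h_t$ is
\[
\partial_t h_t = \beta^{-1}\Delta h_t + \nabla\cdot(\nabla V\, h_t) + \nabla\cdot\big((\nabla W * f_t)\, h_t\big) + \nabla\cdot\big((\nabla W * h_t)\, f_\infty\big).
\]

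Next I would multiply by $h_t$, integrate over $\T^d$ (no boundary terms), and integrate by parts. The diffusion term produces $-\beta^{-1}\norm{\nabla h_t}_{L^2(\T^d)}^2$, while the three transport terms become $-\int \nabla h_t\cdot \nabla V\, h_t$, $-\int \nabla h_t\cdot(\nabla W * f_t)\, h_t$ and $-\int f_\infty\, \nabla h_t\cdot(\nabla W * h_t)$, which I would bound by $\norm{\nabla V}_{L^\infty}\norm{h_t}_{L^2}\norm{\nabla h_t}_{L^2}$, by $\norm{\nabla W}_{L^\infty}\norm{h_t}_{L^2}\norm{\nabla h_t}_{L^2}$ (using $\norm{f_t}_{L^1}=1$), and by $\Gamma\norm{\nabla W}_{L^\infty}\norm{h_t}_{L^2}\norm{\nabla h_t}_{L^2}$ (using $\norm{f_\infty}_{L^\infty}\le\Gamma$, $\norm{h_t}_{L^1}\le\norm{h_t}_{L^2}$, and $\int_{\T^d}\abs{\nabla h_t}\le\norm{\nabla h_t}_{L^2}$ from Cauchy--Schwarz and $\abs{\T^d}=1$), respectively. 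This yields
\[
\tfrac12\tfrac{\d}{\d t}\norm{h_t}_{L^2}^2 \le -\beta^{-1}\norm{\nabla h_t}_{L^2}^2 + \big(\norm{\nabla V}_{L^\infty} + (1+\Gamma)\norm{\nabla W}_{L^\infty}\big)\norm{h_t}_{L^2}\norm{\nabla h_t}_{L^2}.
\]
An application of Young's inequality to absorb part of the cross term into $\beta^{-1}\norm{\nabla h_t}_{L^2}^2$, followed by the Poincaré inequality on $\T^d$ to convert the remaining $-\norm{\nabla h_t}_{L^2}^2$ into $-\norm{h_t}_{L^2}^2$, turns the right-hand side into $-2\zeta\,\norm{h_t}_{L^2}^2$ with $\zeta$ as in \eqref{eq:zeta_def} (the precise constant being fixed by the balance chosen in Young's inequality), and Grönwall gives the claimed exponential decay.

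The main obstacle is essentially this one algebraic identity: with any other grouping of the transport term one is forced to estimate $\nabla W * h_t$ against $f_t$, for which no uniform-in-time $L^\infty$ or $L^2$ bound is available along the whole evolution, and the estimate collapses. A secondary, routine point is bookkeeping of constants through the Young/Poincaré step so that $\zeta$ comes out strictly positive — which, as the formula for $\zeta$ shows, requires $\beta$ small enough (high temperature), consistently with the regime in which \cref{as:torus}\ref{it:stable} guarantees a unique stationary state. One should also observe that the formal differentiation of $\norm{h_t}_{L^2}^2$ is legitimate by parabolic regularity of the McKean--Vlasov PDE on the torus (and that the bound is vacuous unless $f_0\in L^2(\T^d)$).
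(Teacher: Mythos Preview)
Your proposal is correct and follows essentially the same route as the paper: the same algebraic splitting $(\nabla W * f_t) f_t - (\nabla W * f_\infty) f_\infty = (\nabla W * f_t)(f_t-f_\infty) + (\nabla W * (f_t-f_\infty)) f_\infty$, the same use of $\norm{f_\infty}_{L^\infty}\le\Gamma$, Young's convolution inequality and $\norm{h_t}_{L^1}\le\norm{h_t}_{L^2}$, then Young's inequality (the paper makes the choice $\delta=\beta$ explicit) followed by the Poincar\'e inequality on $\T^d$ and Gr\"onwall. Your remark that $\zeta>0$ only in the high-temperature regime is exactly what the paper records in the remark following the proposition.
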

\begin{proof}
We rewrite the McKean--Vlasov PDE~\eqref{eq:McKeanVlasovPDE} for the difference $f_t - f_\infty$, using the fact that $f_{\infty}$ solves the stationary Fokker--Planck equation. We then multiply by $f_t - f_\infty$  and integrate by parts to obtain
\begin{equation} \label{eq:decomposition_L2distance}
\begin{aligned}
\frac12 \frac{\d}{\d t} \norm{f_t - f_\infty}_{L^2(\T^d)}^2 &= \int_{\T^d} \nabla \cdot \left( (\nabla V + \nabla W * f_t) f_t - (\nabla V + \nabla W * f_\infty) f_\infty \right) (f_t - f_\infty) \\
&\quad + \beta^{-1} \int_{\T^d} \Delta (f_t - f_\infty) (f_t - f_\infty) \\
&= - \int_{\T^d} \left( (\nabla V + \nabla W * f_t) f_t - (\nabla V + \nabla W * f_\infty) f_\infty \right) \cdot \nabla (f_t - f_\infty) \\
&\quad - \beta^{-1} \norm{\nabla (f_t - f_\infty)}_{L^2(\T^d)}^2 \\
&\eqdef I_t - \beta^{-1} \norm{\nabla (f_t - f_\infty)}_{L^2(\T^d)}^2.
\end{aligned}
\end{equation}
The quantity $I_t$ can then be decomposed as
\begin{equation}
I_t = - \int_{\T^d} (\nabla V + \nabla W * f_t) (f_t - f_\infty) \cdot \nabla (f_t - f_\infty) - \int_{\T^d} \nabla W * (f_t - f_\infty) \cdot \nabla (f_t - f_\infty) f_\infty,
\end{equation}
which, due to Hölder's and Young's convolution inequalities, implies
\begin{equation}
\begin{aligned}
\abs{I_t} &\le \left( \norm{\nabla V}_{L^\infty(\T^d)} + \norm{\nabla W}_{L^\infty(\T^d)} \right) \norm{f_t - f_\infty}_{L^2(\T^d)} \norm{\nabla (f_t - f_\infty)}_{L^2(\T^d)} \\
&\quad + \norm{\nabla W}_{L^\infty(\T^d)} \norm{f_\infty}_{L^\infty(\T^d)} \norm{f_t - f_\infty}_{L^1(\T^d)} \norm{\nabla (f_t - f_\infty)}_{L^1(\T^d)}.
\end{aligned}
\end{equation}
Since the $L^1(\T^d)$ norm can be bounded by the $L^2(\T^d)$ norm, we have
\begin{equation}
\abs{I_t} \le \left( \norm{\nabla V}_{L^\infty(\T^d)} + (1 + \Gamma) \norm{\nabla W}_{L^\infty(\T^d)} \right) \norm{f_t - f_\infty}_{L^2(\T^d)} \norm{\nabla (f_t - f_\infty)}_{L^2(\T^d)},
\end{equation}
where $\Gamma$ is defined in equation \eqref{eq:Gamma_def}, and which, by Young's inequality with $\delta > 0$, gives
\begin{equation} \label{eq:boundIt_L2convergence}
\begin{aligned}
\abs{I_t} &\le \frac\delta2 \left( \norm{\nabla V}_{L^\infty(\T^d)} + (1 + \Gamma) \norm{\nabla W}_{L^\infty(\T^d)} \right)^2 \norm{f_t - f_\infty}_{L^2(\T^d)}^2 \\
&\quad + \frac1{2\delta} \norm{\nabla (f_t - f_\infty)}_{L^2(\T^d)}^2.
\end{aligned}
\end{equation}
Using the bound \eqref{eq:boundIt_L2convergence} in equation \eqref{eq:decomposition_L2distance} and setting $\delta = \beta$, we obtain
\begin{equation}
\begin{aligned}
\frac{\d}{\d t} \norm{f_t - f_\infty}_{L^2(\T^d)}^2 &\le - \beta^{-1} \norm{\nabla (f_t - f_\infty)}_{L^2(\T^d)}^2 \\
&\quad + \beta \left( \norm{\nabla V}_{L^\infty(\T^d)} + (1 + \Gamma) \norm{\nabla W}_{L^\infty(\T^d)} \right)^2 \norm{f_t - f_\infty}_{L^2(\T^d)}^2,
\end{aligned}
\end{equation}
and, since $f_t - f_\infty$ has zero mean, applying Poincaré inequality with the optimal constant $4/\pi^2$ implies
\begin{equation}
\frac{\d}{\d t} \norm{f_t - f_\infty}_{L^2(\T^d)}^2 \le - \zeta \norm{f_t - f_\infty}_{L^2(\T^d)}^2,
\end{equation}
where  $\zeta$ is defined in equation \eqref{eq:zeta_def}. A proof of the Poincaré inequality on the torus in one dimension can be found, e.g., in \cite[Proposition 4.5.5]{BGL14}, while the result in the multidimensional setting follows from the tensorization property. Finally, applying Grönwall's inequality completes the proof.
\end{proof}

\begin{proposition} \label{pro:convergenceH_torus}
Let \cref{as:torus} be satisfied. Then, it holds
\begin{equation}
\mathcal H(f_t | f_\infty) \le \mathcal H(f_0 | f_\infty) e^{- \eta t},
\end{equation}
where 
\begin{equation} \label{eq:eta_def}
\eta = 8 \left( \frac{\pi^2}{\beta\Gamma} - \norm{\Delta W}_{L^\infty(\T^d)} - \beta \norm{\nabla W}_{L^\infty(\T^d)} (\norm{\nabla V}_{L^\infty(\T^d)} + \norm{\nabla W}_{L^\infty(\T^d)}) \right),
\end{equation}
and $\Gamma$ is defined in equation \eqref{eq:Gamma_def}. 
\end{proposition}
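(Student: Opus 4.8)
The plan is to establish a closed differential inequality $\frac{\d}{\d t}\mathcal H(f_t | f_\infty) \le -\eta\, \mathcal H(f_t | f_\infty)$ and then conclude by Grönwall's lemma, adapting the argument of \cite[Proposition 3.1]{CGP20} to allow for a nontrivial confining potential $V$. First I would differentiate $\mathcal H(f_t|f_\infty) = \int_{\T^d} f_t \log (f_t/f_\infty)\dd x$ in time, insert the McKean--Vlasov PDE~\eqref{eq:McKeanVlasovPDE} for $\partial_t f_t$, integrate by parts on $\T^d$, and use that the integral equation~\eqref{eq:integral_equation} is equivalent to $\nabla \log f_\infty = -\beta(\nabla V + \nabla W * f_\infty)$; the contributions containing $\nabla V$ cancel and one is left with the entropy dissipation identity
\begin{equation}
\frac{\d}{\d t}\mathcal H(f_t|f_\infty) = -\beta^{-1}\mathcal I(f_t|f_\infty) + E_t , \qquad E_t \defeq -\int_{\T^d}\big(\nabla W * (f_t - f_\infty)\big)\cdot f_t\,\nabla\log\frac{f_t}{f_\infty}\dd x ,
\end{equation}
whose first term is the favourable Fisher-information dissipation and whose second term is the error produced by replacing $\nabla W * f_t$ with $\nabla W * f_\infty$ in the drift.

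Next I would show that $E_t$ is in fact quadratic in $f_t - f_\infty$. Writing $g \defeq f_t - f_\infty$ and using that $f_\infty$ is a stationary density, so that $\nabla f_\infty = f_\infty\nabla\log f_\infty$, one has the pointwise identity $f_t\nabla\log(f_t/f_\infty) = \nabla g - g\,\nabla\log f_\infty$; substituting this into $E_t$ and integrating by parts once (so that $-\int_{\T^d}(\nabla W * g)\cdot\nabla g\dd x = \int_{\T^d}(\Delta W * g)\,g\dd x$), the pieces of $E_t$ that are only \emph{linear} in $g$ cancel, leaving
\begin{equation}
E_t = \int_{\T^d}(\Delta W * g)\,g\dd x + \int_{\T^d}(\nabla W * g)\cdot g\,\nabla\log f_\infty\dd x .
\end{equation}
Young's convolution inequality gives $\norm{\Delta W * g}_{L^\infty(\T^d)} \le \norm{\Delta W}_{L^\infty(\T^d)}\norm{g}_{L^1(\T^d)}$ (with $\Delta W \in L^\infty(\T^d)$ since $W \in \mathcal C^2(\T^d)$) and likewise for $\nabla W * g$, while $\norm{f_\infty}_{L^1(\T^d)} = 1$ yields $\norm{\nabla\log f_\infty}_{L^\infty(\T^d)} \le \beta\big(\norm{\nabla V}_{L^\infty(\T^d)}+\norm{\nabla W}_{L^\infty(\T^d)}\big)$, so that $\abs{E_t} \le C\,\norm{f_t-f_\infty}_{L^1(\T^d)}^2$ with $C = \norm{\Delta W}_{L^\infty(\T^d)}+\beta\norm{\nabla W}_{L^\infty(\T^d)}\big(\norm{\nabla V}_{L^\infty(\T^d)}+\norm{\nabla W}_{L^\infty(\T^d)}\big)$.

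To close the estimate I would bound the dissipation from below by the LSI for $\mu_\infty$ with constant $\Gamma/(2\pi^2)$ from \cref{lem:LSI_invariant_torus}, i.e.\ $\mathcal I(f_t|f_\infty) \ge (8\pi^2/\Gamma)\,\mathcal H(f_t|f_\infty)$, and bound $\norm{f_t-f_\infty}_{L^1(\T^d)}^2$ from above by a constant multiple of $\mathcal H(f_t|f_\infty)$ using Pinsker's inequality together with the identity~\eqref{eq:TV_L1}. Plugging both into the dissipation identity gives $\frac{\d}{\d t}\mathcal H(f_t|f_\infty) \le -\eta\,\mathcal H(f_t|f_\infty)$ with $\eta$ as in~\eqref{eq:eta_def} — the term $8\pi^2/(\beta\Gamma)$ being the reciprocal of the LSI constant divided by $\beta$, and the other two terms coming from the bound on $E_t$ — and Grönwall's lemma then yields $\mathcal H(f_t|f_\infty) \le \mathcal H(f_0|f_\infty)e^{-\eta t}$.

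The hard part will be the quadratic bound on $E_t$: a naive estimate is only \emph{linear} in $\norm{f_t-f_\infty}_{L^1(\T^d)}$, since the quantities $\int_{\T^d}(\Delta W * g)\,f_\infty\dd x$ and $\int_{\T^d}(\nabla W * g)\cdot f_\infty\,\nabla\log f_\infty\dd x$ are each only linear in $g$, and a linear error could never force the relative entropy to decay exponentially. The point is that these two linear contributions cancel each other once one uses the stationarity of $f_\infty$ (in the form $\nabla f_\infty = f_\infty\nabla\log f_\infty$) and an integration by parts, after which Pinsker's inequality absorbs the remaining quadratic term into $\mathcal H(f_t|f_\infty)$; this is also where \cref{as:torus}\ref{it:stable} enters, through the bound $1/\Gamma \le f_\infty \le \Gamma$ of \cref{lem:LSI_invariant_torus} underlying the LSI. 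The remaining issues — justifying differentiation under the integral sign and the integrations by parts (standard parabolic regularity on the compact torus with $\mathcal C^2$ coefficients), and the fact that the statement is informative only when $\eta > 0$, i.e.\ at high enough temperature or weak enough interaction — are routine.
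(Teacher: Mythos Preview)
Your proposal is correct and follows essentially the same route as the paper: derive the dissipation identity $\frac{\d}{\d t}\mathcal H(f_t|f_\infty)=-\beta^{-1}\mathcal I(f_t|f_\infty)+\int(\Delta W*g)\,g+\int(\nabla W*g)\cdot g\,\nabla\log f_\infty$, bound the error by $C\norm{g}_{L^1}^2$ via Young's convolution inequality, and close with the LSI of \cref{lem:LSI_invariant_torus} together with CKP/Pinsker and Grönwall. The only cosmetic difference is in how the quadratic-in-$g$ form of the error is reached: the paper first lands on $\int(\Delta W*g)f_t+\int(\nabla W*g)\cdot f_t\nabla\log f_\infty$ and then subtracts the vanishing quantity $\int(\Delta W*g)f_\infty+\int(\nabla W*g)\cdot f_\infty\nabla\log f_\infty=0$, whereas your identity $f_t\nabla\log(f_t/f_\infty)=\nabla g-g\,\nabla\log f_\infty$ gets there in one step; to match the stated $\eta$ exactly, use CKP in the form $\norm{g}_{L^1}^2\le 8\,\mathcal H(f_t|f_\infty)$ as the paper does.
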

\begin{proof}
By differentiating the relative entropy and since $f_t$ solves the McKean--Vlasov PDE \eqref{eq:McKeanVlasovPDE}, we have
\begin{equation}
\begin{aligned}
\frac{\d}{\d t} \mathcal H(f_t | f_\infty) &= \int_{\T^d} \frac{\partial}{\partial t} f_t \left(\log f_t - \log f_\infty \right) + \frac{\d}{\d t}  \int_{\T^d} f_t  \\
&= \int_{\T^d} \left( \nabla \cdot (\nabla V f_t + (\nabla W * f_t) f_t) + \beta^{-1} \Delta f_t \right) \left(\log f_t - \log f_\infty \right),
\end{aligned}
\end{equation}
which, integrating by parts, gives
\begin{equation}
\begin{aligned}
\frac{\d}{\d t} \mathcal H(f_t | f_\infty) &= \int_{\T^d} (\Delta V + \Delta W * f_t) f_t + \int_{\T^d} \nabla \log f_\infty \cdot (\nabla V + \nabla W * f_t) f_t \\
&\quad - \beta^{-1} \int_{\T^d} \abs{\nabla \log f_t}^2 f_t + \beta^{-1} \int_{\T^d} \nabla \log f_\infty \cdot \nabla \log f_t f_t.
\end{aligned}
\end{equation}
By adding and subtracting the Fisher information $\beta^{-1} \mathcal I(f_t | f_\infty)$ we then get
\begin{equation}
\begin{aligned}
\frac{\d}{\d t} \mathcal H(f_t | f_\infty) &= \int_{\T^d} (\Delta V + \Delta W * f_t) f_t + \int_{\T^d} \nabla \log f_\infty \cdot (\nabla V + \nabla W * f_t) f_t \\
&\quad - \beta^{-1} \mathcal I(f_t | f_\infty) + \beta^{-1} \int_{\T^d} \abs{\nabla \log f_\infty}^2 f_t - \beta^{-1} \int_{\T^d} \nabla \log f_\infty \cdot \nabla \log f_t f_t,
\end{aligned}
\end{equation}
which, integrating by parts and since $f_\infty$ solves the integral equation \eqref{eq:integral_equation}, implies
\begin{equation} \label{eq:entropy_decomposition_torus}
\begin{aligned}
\frac{\d}{\d t} \mathcal H(f_t | f_\infty) &= - \beta^{-1} \mathcal I(f_t | f_\infty) + \int_{\T^d} (\Delta V + \Delta W * f_t) f_t - \int_{\T^d} (\Delta V + \Delta W * f_\infty) f_t  \\
&\quad + \int_{\T^d} \nabla \log f_\infty \cdot (\nabla V + \nabla W * f_t) f_t - \int_{\T^d} \nabla \log f_\infty \cdot (\nabla V + \nabla W * f_\infty) f_t \\
&= - \beta^{-1} \mathcal I(f_t | f_\infty) + \int_{\T^d} \Delta W * (f_t - f_\infty) f_t + \int_{\T^d} \nabla \log f_\infty \cdot \nabla W * (f_t - f_\infty) f_t.
\end{aligned}
\end{equation}
Let us notice that, due to an integration by parts, it holds
\begin{equation}
\int_{\T^d} \Delta W * (f_t - f_\infty) f_\infty + \int_{\T^d} \nabla \log f_\infty \cdot \nabla W * (f_t - f_\infty) f_\infty = 0,
\end{equation}
and therefore adding this quantity to the right-hand side of equation \eqref{eq:entropy_decomposition_torus} we obtain
\begin{equation}
\begin{aligned}
\frac{\d}{\d t} \mathcal H(f_t | f_\infty) &= - \beta^{-1} \mathcal I(f_t | f_\infty) + \int_{\T^d} \Delta W * (f_t - f_\infty) (f_t - f_\infty) \\
&\quad + \int_{\T^d} \nabla \log f_\infty \cdot \nabla W * (f_t - f_\infty) (f_t - f_\infty) \\
&\eqdef - \beta^{-1} \mathcal I(f_t | f_\infty) + I^{(1)}_t + I^{(2)}_t.
\end{aligned}
\end{equation}
We now need to bound the terms in the right-hand side. First, by Hölder's and Young's convolution inequalities and since $\nabla \log f_\infty = - \beta (\nabla V + \nabla W * f_\infty)$, we have
\begin{equation}
\begin{aligned}
\abs{I^{(1)}_t} &\le \norm{\Delta W}_{L^\infty(\T^d)} \norm{f_t - f_\infty}_{L^1(\T^d)}^2, \\
\abs{I^{(2)}_t} &\le \beta \norm{\nabla W}_{L^\infty(\T^d)} (\norm{\nabla V}_{L^\infty(\T^d)} + \norm{\nabla W}_{L^\infty(\T^d)}) \norm{f_t - f_\infty}_{L^1(\T^d)}^2,
\end{aligned}
\end{equation}
and then, due to equation \eqref{eq:TV_L1} and the Csiszár--Kullback--Pinsker (CKP) inequality (see, e.g., \cite{BoV05}), we get
\begin{equation} \label{eq:CKP_inequality}
\norm{f_t - f_\infty}_{L^1(\T^d)}^2 \le 8 \mathcal H(f_t | f_\infty).
\end{equation}
Finally, \cref{lem:LSI_invariant_torus} yields
\begin{equation}
\mathcal I(f_t | f_\infty) \ge \frac{8\pi^2}{\Gamma} \mathcal H(f_t | f_\infty),
\end{equation}
which implies
\begin{equation}
\frac{\d}{\d t} \mathcal H(f_t | f_\infty) \le - \eta \mathcal H(f_t | f_\infty),
\end{equation}
where $\eta$ is defined in equation \eqref{eq:eta_def}. The desired result is then obtained applying Grönwall's inequality.
\end{proof}

\begin{remark}
In \cref{pro:convergenceL2_torus,pro:convergenceH_torus}, the exponents $\zeta$ and $\eta$ are always strictly positive in the high temperature regime, i.e., when $\beta$ is sufficiently small. Moreover, from the proof of~\cite[Proposition 3.1]{CDP20} it follows that we can replace $\norm{\Delta W}_{L^\infty(\T^d)}$ in~\eqref{eq:eta_def} with $\norm{\Delta W_u}_{L^\infty(\T^d)}$, where $W_u$ denotes the non-$H$-stable part of $W$ and is defined in~\cite[Definition 2.1]{CDP20}. However, the same argument cannot be applied to $\norm{\nabla W}_{L^\infty(\T^d)}$.
\end{remark}

The corollary below is a straightforward consequence of \cref{pro:convergenceL2_torus,pro:convergenceH_torus}, and quantifies the rate of convergence in $L^1(\T^d)$.

\begin{corollary} \label{cor:convergenceL1_torus}
Let \cref{as:torus} be satisfied. Then, it holds for $i = 1, 2$
\begin{equation}
\norm{f_t - f_\infty}_{L^1(\T^d)} \le c_i e^{- a_i t},
\end{equation}
with $a_1 = \zeta$, $a_2 = \eta/2$, where $\zeta,\eta$ are defined in  \eqref{eq:zeta_def} and \eqref{eq:eta_def}, respectively, and
\begin{equation}
c_1 = \norm{f_0 - f_\infty}_{L^2(\T^d)}, \qquad c_2 = 2\sqrt{2 \mathcal H(f_0|f_\infty)}.
\end{equation}
\end{corollary}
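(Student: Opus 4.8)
The plan is to read off both bounds directly from the two convergence results just established, combining them with elementary functional inequalities and treating the cases $i=1$ and $i=2$ separately.

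For the first bound I would use that $\T^d$ has unit Lebesgue measure, so that by the Cauchy--Schwarz inequality (equivalently, Jensen's inequality) one has the continuous embedding $L^2(\T^d) \hookrightarrow L^1(\T^d)$ with $\norm{g}_{L^1(\T^d)} \le \norm{g}_{L^2(\T^d)}$ for every $g \in L^2(\T^d)$. Applying this with $g = f_t - f_\infty$ and then invoking \cref{pro:convergenceL2_torus} gives
\begin{equation*}
\norm{f_t - f_\infty}_{L^1(\T^d)} \le \norm{f_t - f_\infty}_{L^2(\T^d)} \le \norm{f_0 - f_\infty}_{L^2(\T^d)}\, e^{-\zeta t},
\end{equation*}
which is the assertion with $a_1 = \zeta$ and $c_1 = \norm{f_0 - f_\infty}_{L^2(\T^d)}$.

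For the second bound I would start from the Csiszár--Kullback--Pinsker inequality already recorded in~\eqref{eq:CKP_inequality}, namely $\norm{f_t - f_\infty}_{L^1(\T^d)}^2 \le 8\,\mathcal H(f_t | f_\infty)$, and combine it with the exponential decay of the relative entropy from \cref{pro:convergenceH_torus}. This yields $\norm{f_t - f_\infty}_{L^1(\T^d)}^2 \le 8\,\mathcal H(f_0 | f_\infty)\, e^{-\eta t}$, and taking square roots gives $\norm{f_t - f_\infty}_{L^1(\T^d)} \le 2\sqrt{2\,\mathcal H(f_0 | f_\infty)}\, e^{-\eta t/2}$, i.e. the assertion with $a_2 = \eta/2$ and $c_2 = 2\sqrt{2\,\mathcal H(f_0 | f_\infty)}$.

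There is no genuine obstacle here, as the statement is an immediate corollary; the only two points worth flagging are that the embedding $L^2(\T^d)\hookrightarrow L^1(\T^d)$ relies on working over a probability space (it would fail on $\R^d$), and that passing from the squared $L^1$ norm to the $L^1$ norm in the entropy estimate costs the factor two in the exponent, which is why $a_2 = \eta/2$ rather than $\eta$.
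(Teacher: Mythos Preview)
Your proposal is correct and matches the paper's own proof essentially verbatim: the paper simply states that the result follows from \cref{pro:convergenceL2_torus,pro:convergenceH_torus}, the CKP inequality~\eqref{eq:CKP_inequality}, and the bound of the $L^1(\T^d)$ norm by the $L^2(\T^d)$ norm, which is precisely your argument spelled out in more detail.
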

\begin{proof}
The desired result follows applying \cref{pro:convergenceL2_torus,pro:convergenceH_torus}, the CKP inequality \eqref{eq:CKP_inequality}, and bounding the $L^1(\T^d)$ norm with the $L^2(\T^d)$ one.
\end{proof}

\subsubsection{Convergence of the linearized process}

Similarly to the analysis for the whole space, we first need to prove that the laws of the processes satisfy an LSI uniformly in time. In the lemma below, whose argument is inspired from the proof of \cite[Lemma 5.1]{LaL23}, we show that this holds true as long as the density of the initial condition is bounded away from zero. 

\begin{lemma} \label{lem:LSI_torus}
Let \cref{as:torus} be satisfied, and assume that there exists a constant $\kappa > 1$ such that for all $x \in \T^d$
\begin{equation} \label{eq:bound_0_torus}
\frac1\kappa \le f_0(x) \le \kappa \qquad \text{and} \qquad \frac1\kappa \le g_0(x) \le \kappa,
\end{equation}
where $f_0$ and $g_0$ are the densities with respect to the Lebesgue measure of the initial distributions $\mu_0$ and $\nu_0$. Then, $\nu_t$ satisfies a time-uniform LSI with constant
\begin{equation}
\Xi = \frac{\kappa\Gamma^2}{2\pi^2},
\end{equation}
where $\Gamma$ is defined in \eqref{eq:Gamma_def}. Moreover, if $a_i > 0$ and $C_i/a_i < \mathbb W(1)$ for either $i=1$ or $i=2$, with $a_i$ and $C_i$ given in \cref{cor:convergenceL1_torus} and equation \eqref{eq:Ci_def} below, and where $\mathbb W$ denotes the Lambert function, then $\mu_t$ satisfies a time-uniform LSI with constant
\begin{equation}
\widetilde \Xi_i = \frac{\kappa\Gamma^2}{2\pi^2(1 - (C_i/a_i)e^{C_i/a_i})}.
\end{equation}
\end{lemma}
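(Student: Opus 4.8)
The plan is to adapt the strategy of \cite[Lemma 5.1]{LaL23}: control the densities $g_t$ and $f_t$ relative to the Gibbs density $f_\infty$ uniformly in time, and then transfer the LSI of $\mu_\infty$ established in \cref{lem:LSI_invariant_torus} to $\nu_t$ and to $\mu_t$ by the Holley--Stroock perturbation lemma. The entry point is the pointwise bound $1/\Gamma \le f_\infty \le \Gamma$ proved inside \cref{lem:LSI_invariant_torus}, which together with the hypothesis \eqref{eq:bound_0_torus} gives two-sided bounds of the type $(\kappa\Gamma)^{-1} \le g_0/f_\infty,\, f_0/f_\infty \le \kappa\Gamma$. \textbf{For the linearized law $\nu_t$:} since \eqref{eq:McKeanSDE_linear} is a genuine time-homogeneous diffusion, reversible with respect to $\mu_\infty$, with generator $\mathcal L_\infty = -(\nabla V + \nabla W * f_\infty)\cdot\nabla + \beta^{-1}\Delta$, the relative density $h_t \defeq g_t/f_\infty$ solves the backward Kolmogorov equation $\partial_t h_t = \mathcal L_\infty h_t$, which contains no zeroth-order term. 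On the compact manifold $\T^d$ the parabolic maximum principle then forces $\max_x h_t(x)$ to be nonincreasing and $\min_x h_t(x)$ to be nondecreasing in $t$, so $h_t$ stays in $[(\kappa\Gamma)^{-1},\kappa\Gamma]$ for all $t \ge 0$ and $g_t$ is pinched between explicit positive constants uniformly in time. Feeding this uniform density ratio into the Holley--Stroock lemma applied to the LSI of $\mu_\infty$ (with constant $\Gamma/(2\pi^2)$) yields the time-uniform LSI for $\nu_t$ with constant $\Xi$.

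\textbf{For the nonlinear law $\mu_t$:} writing the McKean drift as $-\nabla V - \nabla W * f_\infty - \nabla W * (f_t - f_\infty)$, the relative density $\widetilde h_t \defeq f_t/f_\infty$ solves the perturbed equation $\partial_t \widetilde h_t = \mathcal L_\infty \widetilde h_t + b_t\cdot\nabla\widetilde h_t + q_t\,\widetilde h_t$, with $b_t = \nabla W * (f_t - f_\infty)$ and $q_t = \Delta W * (f_t - f_\infty) - \beta(\nabla V + \nabla W * f_\infty)\cdot b_t$. By Young's convolution inequality $\norm{q_t}_{L^\infty(\T^d)} \le C_i'\,\norm{f_t - f_\infty}_{L^1(\T^d)}$ for an explicit constant $C_i'$, and \cref{cor:convergenceL1_torus} bounds the right-hand side by $c_i e^{-a_i t}$. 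Evaluating the equation at a spatial maximum (resp. minimum) of $\widetilde h_t$ kills the transport term $b_t\cdot\nabla\widetilde h_t$ and makes $\mathcal L_\infty\widetilde h_t \le 0$ (resp. $\ge 0$), so that $\frac{\d}{\d t}\max_x\widetilde h_t(x) \le \norm{q_t}_{L^\infty(\T^d)}\max_x\widetilde h_t(x)$ and symmetrically for the minimum; a Grönwall estimate with the exponentially decaying coefficient $c_i e^{-a_i t}$ produces a two-sided bound on $\widetilde h_t$ that is uniform in $t$. The hypotheses $a_i>0$ and $C_i/a_i < \mathbb W(1)$ — equivalently $(C_i/a_i)e^{C_i/a_i} < 1$, since $x\mapsto xe^x$ is increasing and $\mathbb W(1)e^{\mathbb W(1)}=1$, where $C_i$ is the constant of \eqref{eq:Ci_def} that collects $C_i'$, $c_i$ and the initial ratio $\kappa\Gamma$ — are precisely what keeps the accumulated amplification $(1-(C_i/a_i)e^{C_i/a_i})^{-1}$ finite and positive. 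Holley--Stroock applied to this uniform ratio then gives the time-uniform LSI for $\mu_t$ with constant $\widetilde\Xi_i$.

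\textbf{Main obstacle.} The delicate step is the maximum principle for $\mu_t$: the perturbation of $\mathcal L_\infty$ appears a priori in divergence form, so one must first rewrite it as the genuine first- plus zeroth-order perturbation above, and then justify — via parabolic regularization for the McKean--Vlasov PDE on the torus — that $f_t$ is smooth enough to evaluate the equation at extremal points and to differentiate $\norm{\widetilde h_t}_{L^\infty(\T^d)}$ in time (alternatively one argues with $\log\widetilde h_t$, or via a stochastic representation along a time-inhomogeneous auxiliary diffusion). A secondary, purely bookkeeping difficulty is arranging that $C_i$, and hence the smallness threshold $C_i/a_i < \mathbb W(1)$, depends on the initial data only through $\kappa$, which is done by bounding the quantities $\norm{f_0 - f_\infty}_{L^2(\T^d)}$ and $\mathcal H(f_0|f_\infty)$ entering $c_i$ by means of the $L^\infty$ bounds on $f_0$ and $f_\infty$.
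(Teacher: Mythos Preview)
Your overall strategy coincides with the paper's: derive uniform two-sided bounds on the relative densities $g_t/f_\infty$ and $f_t/f_\infty$, then invoke Holley--Stroock together with the LSI for $\mu_\infty$ from \cref{lem:LSI_invariant_torus}. The difference is in how the density bounds are obtained. The paper argues probabilistically: it introduces auxiliary diffusions $Z^\nu_t$ (time-homogeneous) and $Z^\mu_t$ (time-inhomogeneous, with the extra drift $\nabla W*(f_{T-t}-f_\infty)$), applies It\^o's formula to $\varphi_{T-t}(Z^\nu_t)$ and $\psi_{T-t}(Z^\mu_t)$, and reads off $\varphi_T(x)=\E[\varphi_0(Z^\nu_T)]$ for $\nu_t$ and an integral identity for $\mu_t$ that, combined with Gr\"onwall, yields the bounds $\psi_T\ge(\kappa\Gamma)^{-1}e^{-C_i/a_i}$ and $\psi_T\le \kappa\Gamma/(1-(C_i/a_i)e^{C_i/a_i})$. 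You instead use the PDE maximum principle on the backward equation for $h_t=g_t/f_\infty$ and on the perturbed equation for $\widetilde h_t=f_t/f_\infty$, evaluating at spatial extrema. These are the probabilistic and analytic sides of the same argument, and you correctly name the stochastic representation as the rigorous fallback for the step you leave heuristic (differentiating $\|\widetilde h_t\|_{L^\infty}$ in $t$).

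Two small remarks. First, your Gr\"onwall on $\max_x\widetilde h_t$ actually gives the tighter upper bound $\kappa\Gamma\,e^{C_i/a_i}$ rather than the paper's $\kappa\Gamma/(1-(C_i/a_i)e^{C_i/a_i})$; since the former is no larger, your route still proves the stated LSI constant $\widetilde\Xi_i$ (in fact a slightly better one). Second, the constant $C_i$ in \eqref{eq:Ci_def} is simply $c_i$ multiplied by the $W,V$ norm combination; it does \emph{not} contain the initial ratio $\kappa\Gamma$ as you suggest.
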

\begin{proof}
Let $\varphi_t$ and $\psi_t$ be the Radon--Nikodym derivative of $\nu_t$ and $\mu_t$ with respect to $\mu_\infty = \nu_\infty$, i.e.,
\begin{equation}
\varphi_t = \frac{\d \nu_t}{\d \mu_\infty} = \frac{g_t}{f_\infty} \qquad \text{and} \qquad \psi_t = \frac{\d \mu_t}{\d \mu_\infty} = \frac{f_t}{f_\infty},
\end{equation}
and note that, since $f_t,g_t$ and $f_\infty$ solve the time-dependent and stationary Fokker--Planck equations \eqref{eq:FP_equations}, respectively, $\varphi_t$ solves
\begin{equation} \label{eq:phi_evolution}
\frac{\partial \varphi_t}{\partial t} = \beta^{-1} \Delta \varphi_t - (\nabla V + \nabla W * f_\infty) \cdot \nabla \varphi_t,
\end{equation}
and $\psi_t$ satisfies
\begin{equation} \label{eq:psi_evolution}
\begin{aligned}
\frac{\partial \psi_t}{\partial t} &= \beta^{-1} \Delta \psi_t + \Delta W * ((\psi_t - 1) f_\infty) \psi_t - \beta (\nabla V + \nabla W * f_\infty) \cdot \nabla W * ((\psi_t - 1) f_\infty) \psi_t \\
&\quad - (\nabla V + \nabla W * f_\infty) \cdot \nabla \psi_t + \nabla W * ((\psi_t - 1) f_\infty) \cdot \nabla \psi_t.
\end{aligned}
\end{equation}
We now follow the proof of~\cite[Lemma 5.1]{LaL23} that is based on the argument presented in~\cite[Theorem 2]{GLM24}; see also the proof of~\cite[Theorem 5]{NPR24}. Fix $T > 0$ and $x \in \T^d$, and let $(Z_t^\nu)_{t\in[0,T]}$ be the solution of the SDE
\begin{equation} \label{eq:Z_nu_SDE}
\d Z_t^\nu = - (\nabla V(Z_t^\nu) + \nabla W * f_\infty (Z_t^\nu)) \dd t + \sqrt{2\beta^{-1}} \dd B_t,
\end{equation}
with initial condition $Z_0^\nu = x$. Using equation \eqref{eq:phi_evolution} and applying Itô lemma to the function $\varphi_{T-t}$, we obtain
\begin{equation}
\d \varphi_{T-t}(Z_t^\nu) = \sqrt{2\beta^{-1}} \nabla \varphi_{T-t}(Z_t^\nu) \cdot \d B_t,
\end{equation}
which, integrating, taking the expectation, and noting that the Itô integral has expectation zero, gives
\begin{equation}
\E [\varphi_{T-t}(Z_t^\nu)] = \varphi_T(x).
\end{equation}
Then, due to hypothesis \eqref{eq:bound_0_torus}, choosing $t=T$ yields for all $T > 0$
\begin{equation} \label{eq:bound_density_nu}
\frac1{\kappa \Gamma} \le \varphi_T(x) \le \kappa \Gamma.
\end{equation}
The argument is similar, but more complex, for the measure $\mu_t$. Let $(Z_t^\mu)_{t\in[0,T]}$ be the solution of the SDE
\begin{equation} \label{eq:Z_mu_SDE}
\d Z_t^\mu = - (\nabla V(Z_t^\mu) + \nabla W * f_\infty (Z_t^\mu)) \dd t + \nabla W * ((\psi_{T-t} - 1) f_\infty)(Z_t^\mu) \dd t + \sqrt{2\beta^{-1}} \dd B_t,
\end{equation}
with initial condition $Z_0^\mu = x$. Using equation \eqref{eq:psi_evolution} and applying Itô lemma to the function $\psi_{T-t}$, we obtain
\begin{equation}
\begin{aligned}
\d \psi_{T-t}(Z_t^\mu) &= \beta (\nabla V(Z_t^\mu) + \nabla W * f_\infty(Z_t^\mu)) \cdot \nabla W * ((\psi_{T-t} - 1) f_\infty) (Z_t^\mu) \psi_{T-t}(Z_t^\mu) \dd t \\
&\quad - \Delta W * ((\psi_{T-t} - 1) f_\infty)(Z_t^\mu) \psi_{T-t}(Z_t^\mu) \dd t + \sqrt{2\beta^{-1}} \nabla \psi_{T-t}(Z_t^\mu) \cdot \d B_t,
\end{aligned}
\end{equation}
which, integrating and taking the expectation, gives
\begin{equation} \label{eq:expectation_psi_Tt}
\begin{aligned}
\E [\psi_{T-t}(Z_t^\mu)] &= \psi_T(x) - \int_0^t \E \left[ \Delta W * ((\psi_{T-s} - 1) f_\infty)(Z_s^\mu) \psi_{T-s}(Z_s^\mu) \dd s \right] \\
&\hspace{-0.5cm} + \beta \int_0^t \E \left[ (\nabla V(Z_s^\mu) + \nabla W * f_\infty(Z_s^\mu)) \cdot \nabla W * ((\psi_{T-s} - 1) f_\infty) (Z_s^\mu) \psi_{T-s}(Z_s^\mu) \dd s \right].
\end{aligned}
\end{equation}
Using \cref{cor:convergenceL1_torus} we have
\begin{equation} \label{eq:bound_pros}
\norm{(\psi_{T-s} - 1) f_\infty}_{L^1(\T^d)} \le c_i e^{-a_i(T-s)},
\end{equation}
which implies
\begin{equation}
\E [\psi_{T-t}(Z_t^\mu)] \le \psi_T(x) + C_i \int_0^t e^{-a_i(T-s)} \E [\psi_{T-s}(Z_s^\mu)] \dd s,
\end{equation}
where
\begin{equation} \label{eq:Ci_def}
C_i = c_i \left( \norm{\Delta W}_{L^\infty(\T^d)} + \beta \norm{\nabla W}_{L^\infty(\T^d)} (\norm{\nabla V}_{L^\infty(\T^d)} + \norm{\nabla W}_{L^\infty(\T^d)}) \right).
\end{equation}
By Grönwall's inequality we deduce
\begin{equation} \label{eq:bound_psi_Tt}
\E [\psi_{T-t}(Z_t^\mu)] \le \psi_T(x) e^{C_i \int_0^t e^{-a_i(T-s)} \dd s} \le \psi_T(x) e^{C_i/a_i},
\end{equation}
which, choosing $t=T$ and by hypothesis \eqref{eq:bound_0_torus}, yields 
\begin{equation} \label{eq:psiT_lower}
\psi_T(x) \ge \frac1{\kappa \Gamma} e^{- C_i/a_i}.
\end{equation}
Moreover, using again equations \eqref{eq:expectation_psi_Tt} and \eqref{eq:bound_pros} with $t=T$, we have
\begin{equation}
\psi_T(x) \le \E [\psi_0(Z_t^\mu)] + C_i \int_0^t e^{-a_i(T-s)} \E [\psi_{T-s}(Z_s^\mu)] \dd s,
\end{equation}
which, by equation \eqref{eq:bound_psi_Tt}, gives
\begin{equation} \label{eq:psiT_upper}
\psi_T(x) \le \E [\psi_0(Z_t^\mu)] + C_i \psi_T(x) e^{C_i/a_i} \int_0^t e^{-a_i(T-s)} \dd s \le \kappa \Gamma + \frac{C_i}{a_i} \psi_T(x) e^{C_i/a_i}.
\end{equation}
Combining bounds \eqref{eq:psiT_lower} and \eqref{eq:psiT_upper}, we obtain for all $T > 0$
\begin{equation} \label{eq:bound_density_mu}
\frac1{\kappa \Gamma e^{C_i/a_i}} \le \psi_T(x) \le \frac{\kappa\Gamma}{1 - (C_i/a_i)e^{C_i/a_i}},
\end{equation}
where we remark that, due to the assumptions on $a_i$ and $C_i$, the inequalities make sense since the left-hand side and the right-hand side are always smaller and greater than one, respectively. Notice also that it holds
\begin{equation}
\kappa \Gamma e^{C_i/a_i} \le \frac{\kappa\Gamma}{1 - (C_i/a_i)e^{C_i/a_i}}.
\end{equation}
Therefore, the desired results follow from the bounds \eqref{eq:bound_density_nu} and \eqref{eq:bound_density_mu} by applying the Holley--Stroock perturbation lemma (see, e.g., \cite[Lemma 2.3]{GLM24}) and \cref{lem:LSI_invariant_torus}.
\end{proof}

Using the LSI, we can finally prove that the measures $\mu_t$ and $\nu_t$ are exponentially close in time, in relative entropy. The argument is similar to the proof of \cref{thm:entropy_whole}.

\begin{theorem} \label{thm:entropy_torus}
Let the assumptions of \cref{lem:LSI_torus} be satisfied. Then, it holds for $i=1,2$
\begin{equation}
\mathcal H (\mu_t | \nu_t) \le \sigma_\Xi(t) \defeq \begin{cases}
\left( \mathcal H (\mu_0 | \nu_0) + \frac{\beta^2 c_i \Xi \norm{\nabla W}_{L^\infty(\T^d)}^2}{2(2 - \beta a_i \Xi)} \right) e^{-a_i t}, & \text{if } \Xi < \frac2{a_i\beta}, \\
\left( \mathcal H (\mu_0 | \nu_0) + \frac12 \beta c_i \norm{\nabla W}_{L^\infty(\T^d)}^2 t \right) e^{- \frac2{\beta\Xi} t}, & \text{if } \Xi = \frac2{a_i\beta}, \\
\left( \mathcal H (\mu_0 | \nu_0) + \frac{\beta^2 c_i \Xi \norm{\nabla W}_{L^\infty(\T^d)}^2}{2(\beta a_i \Xi - 2)} \right) e^{- \frac2{\beta\Xi} t}, & \text{if } \Xi > \frac2{a_i\beta},
\end{cases}
\end{equation}
where $a_i$ and $c_i$ are defined in \cref{cor:convergenceL1_torus}.
\end{theorem}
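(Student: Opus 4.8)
The proof follows verbatim the strategy of \cref{thm:entropy_whole}, with the $L^1$ convergence rate \eqref{eq:convergenceL1_density_whole} replaced by \cref{cor:convergenceL1_torus} and the moment bound \eqref{eq:M_nablaW_whole} replaced by the boundedness of $\nabla W$ on the compact torus. First I would invoke \cite[Lemma 3.1]{LaL23}, whose proof only uses that the two Fokker--Planck equations in \eqref{eq:FP_equations} differ through the drift term $\nabla W * (f_s - f_\infty)$ and is insensitive to the choice of domain, to obtain
\begin{equation*}
\mathcal H (\mu_t | \nu_t) + \frac1{2\beta} \int_0^t  \mathcal I(\mu_s | \nu_s) \dd s \le \mathcal H (\mu_0 | \nu_0) + \frac\beta2 \int_0^t \int_{\T^d} \abs{(\nabla W * (f_s - f_\infty))(x)}^2 f_s(x) \dd x \dd s.
\end{equation*}

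Next I would close the left-hand side using the time-uniform LSI for the reference measure $\nu_t$ established in \cref{lem:LSI_torus}, that is, $\mathcal I(\mu_s|\nu_s) \ge (4/\Xi)\,\mathcal H(\mu_s|\nu_s)$ with $\Xi = \kappa\Gamma^2/(2\pi^2)$ --- it is essential to apply the LSI to $\nu_s$ rather than to $\mu_s$, since $\nu_s$ is the base measure of $\mathcal I(\mu_s|\nu_s)$ --- which turns the estimate into
\begin{equation*}
\mathcal H (\mu_t | \nu_t) + \frac2{\beta\Xi} \int_0^t  \mathcal H(\mu_s | \nu_s) \dd s \le \mathcal H (\mu_0 | \nu_0) + \frac\beta2 \int_0^t \int_{\T^d} \abs{(\nabla W * (f_s - f_\infty))(x)}^2 f_s(x) \dd x \dd s.
\end{equation*}
Then I would bound the source term: since $W \in \mathcal C^2(\T^d)$, Young's convolution inequality gives $\abs{(\nabla W * (f_s - f_\infty))(x)} \le \norm{\nabla W}_{L^\infty(\T^d)}\norm{f_s - f_\infty}_{L^1(\T^d)}$ for all $x \in \T^d$, so that integrating the square against $f_s$ and using \cref{cor:convergenceL1_torus} for the chosen index $i \in \{1,2\}$ yields
\begin{equation*}
\int_{\T^d} \abs{(\nabla W * (f_s - f_\infty))(x)}^2 f_s(x) \dd x \le \norm{\nabla W}_{L^\infty(\T^d)}^2 \norm{f_s - f_\infty}_{L^1(\T^d)}^2 \le c_i\norm{\nabla W}_{L^\infty(\T^d)}^2 e^{-a_i s}.
\end{equation*}

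Plugging this into the previous display and applying the Grönwall lemma \cite[Lemma A.1]{LaL23} exactly as in the proof of \cref{thm:entropy_whole}, I would arrive at
\begin{equation*}
\mathcal H (\mu_t | \nu_t) \le \mathcal H (\mu_0 | \nu_0) e^{- \frac2{\beta\Xi} t} + \frac{\beta c_i\norm{\nabla W}_{L^\infty(\T^d)}^2}2\, e^{- \frac2{\beta\Xi} t} \int_0^t e^{\left( \frac2{\beta\Xi} - a_i \right) s} \dd s,
\end{equation*}
after which evaluating the last integral in the three regimes $\Xi < 2/(a_i\beta)$, $\Xi = 2/(a_i\beta)$, and $\Xi > 2/(a_i\beta)$ --- i.e.\ according to the sign of $2/(\beta\Xi) - a_i$ --- reproduces the three branches of $\sigma_\Xi(t)$. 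The argument involves no genuinely new analytic difficulty beyond \cref{thm:entropy_whole}; the step requiring the most care is the source-term estimate: on $\R^d$ one controls $\nabla W * (f_s - f_\infty)$ through uniform-in-time moment bounds on $\mu_s$, whereas on $\T^d$ compactness makes $\nabla W$ bounded but, in exchange, the $L^1$ decay of $f_s - f_\infty$ is no longer automatic and must be supplied by \cref{cor:convergenceL1_torus} --- this is precisely why the statement is indexed by $i \in \{1,2\}$ and carries the two pairs of rates $(a_1,c_1)$ and $(a_2,c_2)$ coming respectively from the $L^2$ and the relative-entropy convergence estimates of the previous subsection.
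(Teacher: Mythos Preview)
Your proposal is correct and follows the paper's proof essentially step for step: the same invocation of \cite[Lemma 3.1]{LaL23}, the same use of the LSI for $\nu_s$ from \cref{lem:LSI_torus} to close the Fisher term, the same $L^\infty$--$L^1$ bound on the convolution combined with \cref{cor:convergenceL1_torus}, and the same Gr\"onwall argument via \cite[Lemma A.1]{LaL23} with the final case distinction. Your remark that it is the LSI for the reference measure $\nu_s$ (not $\mu_s$) that is needed is a useful clarification, and your observation that compactness of $\T^d$ replaces the moment bound \eqref{eq:M_nablaW_whole} captures exactly the difference from the $\R^d$ case.
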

\begin{proof}
By \cite[Lemma 3.1]{LaL23} we have the estimate
\begin{equation}
\mathcal H (\mu_t | \nu_t) + \frac1{2\beta} \int_0^t  \mathcal I(\mu_s | \nu_s) \dd s \le \mathcal H (\mu_0 | \nu_0) + \frac\beta2 \int_0^t \int_{\T^d} \abs{(\nabla W * (f_s - f_\infty))(x)}^2 f_s(x) \dd x \dd s,
\end{equation}
which, due to \cref{lem:LSI_torus}, implies
\begin{equation} \label{eq:entropy_initial_estimate_torus}
\mathcal H (\mu_t | \nu_t) + \frac2{\beta\Xi} \int_0^t  \mathcal H(\mu_s | \nu_s) \dd s \le \mathcal H (\mu_0 | \nu_0) + \frac\beta2 \int_0^t \int_{\T^d} \abs{(\nabla W * (f_s - f_\infty))(x)}^2 f_s(x) \dd x \dd s.
\end{equation}
Then, \cref{cor:convergenceL1_torus} gives
\begin{equation}
\abs{(\nabla W * (f_s - f_\infty))(x)}^2 \le \norm{\nabla W}_{L^\infty(\T^d)}^2 \norm{f_s - f_\infty}_{L^1(\T^d)}^2 \le c_i \norm{\nabla W}_{L^\infty(\T^d)}^2 e^{-a_i s},
\end{equation}
which yields
\begin{equation}
\mathcal H (\mu_t | \nu_t) \le \mathcal H (\mu_0 | \nu_0) + \frac{\beta c_i \norm{\nabla W}_{L^\infty(\T^d)}^2}2 \int_0^t e^{- a_i s} \dd s - \frac2{\beta\Xi} \int_0^t  \mathcal H(\mu_s | \nu_s) \dd s.
\end{equation}
Applying the Grönwall's inequality in \cite[Lemma A.1]{LaL23}, we have
\begin{equation}
\begin{aligned}
\mathcal H (\mu_t | \nu_t) &\le \mathcal H (\mu_0 | \nu_0) e^{- \frac2{\beta\Xi} t} + \frac{\beta c_i \norm{\nabla W}_{L^\infty(\T^d)}^2}2 \int_0^t e^{- \frac2{\beta\Xi} (t-s)} e^{- a_i s} \dd s, \\
&= \mathcal H (\mu_0 | \nu_0) e^{- \frac2{\beta\Xi} t} + \frac{\beta c_i \norm{\nabla W}_{L^\infty(\T^d)}^2}2 e^{- \frac2{\beta\Xi} t} \int_0^t e^{\frac{2 - a_i\beta\Xi}{\beta\Xi}s} \dd s.
\end{aligned}
\end{equation}
Therefore, if $\Xi = 2/(a_i \beta)$, we deduce
\begin{equation}
\mathcal H (\mu_t | \nu_t) \le \left( \mathcal H (\mu_0 | \nu_0) + \frac{\beta c_i \norm{\nabla W}_{L^\infty(\T^d)}^2}2 t \right) e^{- \frac2{\beta\Xi} t},
\end{equation}
otherwise we get
\begin{equation}
\mathcal H (\mu_t | \nu_t) \le \mathcal H (\mu_0 | \nu_0) e^{- \frac2{\beta\Xi} t} + \frac{\beta^2 c_i \Xi \norm{\nabla W}_{L^\infty(\T^d)}^2}{2(2 - \beta a_i \Xi)} \left( e^{-a_i t} - e^{- \frac2{\beta\Xi} t} \right),
\end{equation}
which concludes the proof.
\end{proof}

Similarly to the case where the state space is the whole space, the following convergence in Wasserstein distance is a direct consequence of the Talagrand’s transportation inequality.

\begin{corollary} \label{cor:wasserstein_torus}
Let the assumptions of \cref{thm:entropy_torus} be satisfied. Then, it holds
\begin{equation}
\mathcal W_2 (\mu_t, \nu_t) \le \sqrt{ \Xi \sigma_\Xi(t)},
\end{equation}
where $\mathcal W_2$ denotes the 2-Wasserstein distance and $\sigma_\Xi(t)$ is defined in \cref{thm:entropy_torus}.
\end{corollary}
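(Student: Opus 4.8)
The plan is to mirror the proof of \cref{cor:wasserstein_whole}, combining the time‑uniform logarithmic Sobolev inequality for $\nu_t$ with the entropy bound already established on the torus. Concretely, \cref{lem:LSI_torus} guarantees, under its hypotheses, that $\nu_t$ satisfies an LSI with the time‑independent constant $\Xi = \kappa\Gamma^2/(2\pi^2)$ in the normalization \eqref{eq:LSI_def}. The Otto--Villani theorem then upgrades this to a Talagrand $T_2$ transportation‑cost inequality for $\nu_t$: for every $\mu \in \mathcal P(\T^d)$ absolutely continuous with respect to $\nu_t$,
\begin{equation}
\mathcal W_2(\mu, \nu_t)^2 \le \Xi\, \mathcal H(\mu \mid \nu_t),
\end{equation}
the appearance of $\Xi$ rather than $2\Xi$ reflecting the factor $4$ in \eqref{eq:LSI_def}, exactly as in \cref{cor:wasserstein_whole}. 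Applying this with $\mu = \mu_t$ gives $\mathcal W_2(\mu_t,\nu_t) \le \sqrt{\Xi\, \mathcal H(\mu_t \mid \nu_t)}$.

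It then remains only to invoke \cref{thm:entropy_torus}, which yields $\mathcal H(\mu_t \mid \nu_t) \le \sigma_\Xi(t)$ for $i = 1,2$; substituting this bound gives $\mathcal W_2(\mu_t,\nu_t) \le \sqrt{\Xi\,\sigma_\Xi(t)}$, which is the claim. Since $\sigma_\Xi(t)$ decays exponentially, at rate $\min\{a_i,\, 2/(\beta\Xi)\}$ (up to a linear‑in‑$t$ prefactor in the critical case $\Xi = 2/(a_i\beta)$), the Wasserstein distance inherits exponential decay with half that exponent.

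The only point requiring a little care — and the only place where the torus genuinely differs from the $\R^d$ case of \cref{cor:wasserstein_whole} — is the validity of the implication LSI $\Rightarrow T_2$ on the compact flat torus rather than on Euclidean space. This is not a real obstacle: the argument of \cite{OtV00}, or more robustly the Hamilton--Jacobi / infimum‑convolution route (see, e.g., \cite{BGL14}), is local in nature and carries over verbatim to $\T^d$ equipped with its flat Riemannian metric, so no new idea is needed. The remaining bookkeeping — matching the normalization of the LSI constant in \eqref{eq:LSI_def} to the transportation inequality, and using that $\Xi$ is genuinely independent of $t$ so that the bound holds for all $t \ge 0$ — is routine.
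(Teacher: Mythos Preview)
Your proposal is correct and follows essentially the same approach as the paper: the paper's proof is a single sentence invoking \cref{thm:entropy_torus} together with \cite[Theorem 1]{OtV00}, and you have simply spelled out the intermediate steps (the time-uniform LSI for $\nu_t$ from \cref{lem:LSI_torus}, the Otto--Villani implication LSI $\Rightarrow T_2$, and the substitution of the entropy bound). Your additional remark on the validity of Otto--Villani on $\T^d$ is a reasonable clarification not present in the paper, but the overall argument is identical.
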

\begin{proof}
The desired result follows from \cref{thm:entropy_torus} using \cite[Theorem 1]{OtV00}.
\end{proof}

\section{Applications} \label{sec:applications}

In this section, we present two applications in which the linearization procedure can be used. From the analysis presented in the previous section, the law of the solution of the linearized McKean SDE $\nu_t$ is close to the law of the nonlinear process $\mu_t$, and their distance vanishes exponentially fast as $t \to \infty$. Therefore, the linear process $Y_t$ can provide a reliable approximation when studying properties that depend on the long-time behaviour of the nonlinear dynamics $X_t$. We first study the maximum likelihood estimator for the mean field dynamics. Then, we compute the diffusion coefficient of the limiting Brownian motion in the diffusive-mean field limit of weakly interacting diffusions in periodic potentials.

\subsection{Maximum likelihood estimation} \label{sec:MLE}

We consider the case where the confining and interaction potentials $V$ and $W$ depend on unknown parameters. Our goal is to infer these parameters by observing a trajectory of the mean field SDE \eqref{eq:McKeanSDE}. Hence, consider the equation on the whole space $\R^d$ and in the time horizon $[0,T]$
\begin{equation} \label{eq:McKeanSDE_MLE}
\d X_t = - \nabla V(X_t; \theta) \dd t - (\nabla W(\cdot; \theta) * f_t(\cdot; \theta)) (X_t) \dd t + \sqrt{2 \beta^{-1}} \dd B_t,
\end{equation}
where $\theta \in \Theta$ is the vector of unknown parameters and $\Theta$ is the admissible set. The noise strength is assumed to be known. We denote by $\theta_0$ the exact value and follow a maximum likelihood approach~\cite{Kas90, Bis11, DeH23}. In particular, by the Girsanov theorem, the log-likelihood is
\begin{equation} \label{eq:likelihood}
\begin{aligned}
\mathfrak L_T(\theta) &= - \frac\beta2 \int_0^T \left( \nabla V(X_t; \theta) + (\nabla W(\cdot; \theta) * f_t(\cdot; \theta)) (X_t) \right)^\top \dd X_t \\
&\quad - \frac\beta4 \int_0^T \abs{\nabla V(X_t; \theta) + (\nabla W(\cdot; \theta) * f_t(\cdot; \theta)) (X_t)}^2 \dd t,
\end{aligned}
\end{equation}
and the MLE is defined as
\begin{equation}
\widehat\theta_T = \argmax_{\theta\in\Theta} \mathfrak L_T(\theta).
\end{equation}
We will use the following notation. 
\begin{equation} \label{eq:def_b}
\begin{aligned}
b_\theta(t,x) &\defeq - \nabla V(x; \theta) - (\nabla W(\cdot; \theta) * f_t(\cdot; \theta)) (x), \\
\widetilde b_\theta(x) &\defeq - \nabla V(x; \theta) - (\nabla W(\cdot; \theta) * f_\infty(\cdot; \theta)) (x),
\end{aligned}
\end{equation}
and notice that maximizing $\mathfrak L_T$ is equivalent to maximize the quantity
\begin{equation}\label{e:mlf_alt}
\ell_T(\theta) = \frac\beta2 \int_0^T \left( b_\theta(t,X_t) - b_{\theta_0}(t,X_t) \right)^\top \dd B_t - \frac\beta4 \int_0^T \abs{b_\theta(t,X_t) - b_{\theta_0}(t,X_t)}^2 \dd t.
\end{equation}
In particular, the MLE is defined as
\begin{equation}
\widehat\theta_T = \argmax_{\theta\in\Theta} \mathfrak L_T(\theta) = \argmax_{\theta\in\Theta} \ell_T(\theta).
\end{equation}
We now define the linearized version of the log-likelihood around the invariant measure $f_\infty(\cdot; \theta)$,
\begin{equation} \label{eq:likelihood_linearized}
\begin{aligned}
\widetilde{\mathfrak L}_T(\theta) &= - \frac\beta2 \int_0^T \left( \nabla V(X_t; \theta) + (\nabla W(\cdot; \theta) * f_\infty(\cdot; \theta)) (X_t) \right)^\top \dd X_t \\
&\quad - \frac\beta4 \int_0^T \abs{\nabla V(X_t; \theta) + (\nabla W(\cdot; \theta) * f_\infty(\cdot; \theta)) (X_t)}^2 \dd t,
\end{aligned}
\end{equation}
and the analogue of~\eqref{e:mlf_alt}
\begin{equation} \label{eq:equivalent_likelihood_tilde}
\widetilde\ell_T(\theta) \defeq \frac\beta2 \int_0^T \left( \widetilde b_\theta(X_t) - b_{\theta_0}(t,X_t) \right)^\top \dd B_t - \frac\beta4 \int_0^T \abs{\widetilde b_\theta(X_t) - b_{\theta_0}(t,X_t)}^2 \dd t.
\end{equation}
The linearized MLE is therefore given by
\begin{equation}
\widetilde\theta_T = \argmax_{\theta\in\Theta} \widetilde{\mathfrak L}_T(\theta) = \argmax_{\theta\in\Theta} \widetilde\ell_T(\theta).
\end{equation}
It is already known that the MLE $\widehat \theta_T$, which is computed on a path from the solution to the McKean SDE, is asymptotically unbiased \cite[Theorem 3.1]{LiQ22}, meaning that 
\begin{equation}
\lim_{T \to \infty} \widehat\theta_T = \theta_0 \qquad \text{a.s.},
\end{equation}
and we aim to show that the same result holds true for the linearized MLE $\widetilde\theta_T$.

\subsubsection{Asymptotic unbiasedness of the linearized MLE}

In this section, we show that replacing the density of the process by its invariant measure in the calculation of the likelihood function, while observing a path of the McKean SDE, does not affect the resulting MLE. We emphasize the fact that we keep the same observation model, i.e., that the ``linearized'' likelihood is still evaluated at a path of the nonlinear process. We will make the following assumptions.

\begin{assumption} \label{as:MLE}
The following properties hold true.
\begin{enumerate}[leftmargin=*]
\item The admissible set $\Theta$ is compact.
\item\label{it:Lipschitz} The gradients of the confining and interaction potentials are Lipschitz in $\theta$, i.e., there exist $L_V, L_W \colon \R^d \to \R$, $L_V, L_W > 0$, such that for all $\theta_1, \theta_2 \in \Theta$ and for all $x \in \R^d$
\begin{equation}
\begin{aligned}
\abs{\nabla V(x; \theta_1) - \nabla V(x; \theta_2)} &\le L_V(x) \abs{\theta_1 - \theta_2}, \\
\abs{\nabla W(x; \theta_1) - \nabla W(x; \theta_2)} &\le L_W(x) \abs{\theta_1 - \theta_2},
\end{aligned}
\end{equation}
where $L_V(x)$ and $L_W(x)$ are polynomially bounded.
\item\label{it:identifiability} The function
\begin{equation}
\theta \mapsto \int_{\R^d} \abs{\widetilde b_\theta(x) - \widetilde b_{\theta_0}(x)}^2 f_\infty(x;\theta_0) \dd x 
\end{equation}
admits only one zero at $\theta = \theta_0$.
\end{enumerate}
\end{assumption}

\begin{remark}
The condition in \cref{as:MLE}\ref{it:identifiability}, which appears also in \cite{Van01}, is necessary to guarantee the identifiability of the unknown parameter $\theta_0$, and it is related to the nondegeneracy of the Fisher information matrix in \cite{DeH23}. We notice that this condition is satisfied, e.g., if $\widetilde b_\theta$ is an affine function of $\theta$.
\end{remark}

The main result of this section is the asymptotic unbiasedness of $\widetilde \theta_T$, when evaluated at a path of the McKean SDE.

\begin{theorem}\label{thm:MLE_lin}
Under \cref{as:whole,as:MLE}, it holds
\begin{equation}
\lim_{T \to \infty} \widetilde\theta_T = \theta_0 \qquad \text{a.s.}
\end{equation}
\end{theorem}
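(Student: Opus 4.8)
The plan is to run the standard M-estimator (consistency) argument, adapted to this non-stationary, nonlinear observation model. First I would write the equivalent linearized log-likelihood as $\widetilde\ell_T(\theta) = \tfrac\beta2 M_T(\theta) - \tfrac\beta4 Q_T(\theta)$, where $M_T(\theta) \defeq \int_0^T (\widetilde b_\theta(X_t) - b_{\theta_0}(t,X_t))^\top \dd B_t$ is a continuous local martingale in $T$ and $Q_T(\theta) \defeq \int_0^T \abs{\widetilde b_\theta(X_t) - b_{\theta_0}(t,X_t)}^2 \dd t = \langle M(\theta)\rangle_T$. I would then introduce the deterministic limit functional $G(\theta) \defeq \int_{\R^d} \abs{\widetilde b_\theta(x) - \widetilde b_{\theta_0}(x)}^2 f_\infty(x;\theta_0)\dd x$, which by \cref{as:MLE}\ref{it:identifiability} is nonnegative and vanishes only at $\theta_0$, and which is continuous on the compact set $\Theta$ by \cref{as:MLE}\ref{it:Lipschitz} together with the polynomial growth of $\nabla V,\nabla W$ and the uniform-in-time moment bounds granted by \cref{as:whole}; in particular $\inf_{\abs{\theta-\theta_0}\ge\epsilon} G(\theta) > 0$ for every $\epsilon>0$. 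The goal is to show $\tfrac1T Q_T(\theta) \to G(\theta)$ and $\tfrac1T M_T(\theta) \to 0$ almost surely, uniformly in $\theta\in\Theta$, and then to conclude by a contradiction argument.

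For the convergence of $\tfrac1T Q_T(\theta)$ I would decompose $\widetilde b_\theta(X_t) - b_{\theta_0}(t,X_t) = (\widetilde b_\theta(X_t) - \widetilde b_{\theta_0}(X_t)) + r_t(X_t)$ with $r_t(x) \defeq \widetilde b_{\theta_0}(x) - b_{\theta_0}(t,x) = -(\nabla W(\cdot;\theta_0) * (f_\infty - f_t))(x)$, $f_t$ being the law of $X_t$. Expanding the square, the terms containing $r_t$ are negligible after time-averaging: Cauchy--Schwarz combined with the bound \eqref{eq:M_nablaW_whole} and the exponential decay \eqref{eq:convergenceL1_density_whole} gives $\E[\abs{r_t(X_t)}^2] \le MK e^{-\alpha t/2}$, hence $\E[\int_0^\infty \abs{r_t(X_t)}^2 \dd t] < \infty$ and so $\tfrac1T\int_0^T \abs{r_t(X_t)}^2 \dd t \to 0$ a.s., the cross term being handled similarly with the moment bounds. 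For the leading term I would couple $X_t$ with the linearized process $Y_t$ of \eqref{eq:McKeanSDE_linear} at $\theta=\theta_0$, driven by the same Brownian motion with $Y_0 = X_0$: \cref{thm:coupling_whole} gives $\E[\abs{X_t - Y_t}^2] \le C e^{-\alpha t/2}$, and since $\widetilde b_\theta - \widetilde b_{\theta_0}$ is locally Lipschitz with polynomially bounded constant by \cref{as:MLE}\ref{it:Lipschitz}, Cauchy--Schwarz and the moment bounds give $\E[\,\abs{\,\abs{\widetilde b_\theta(X_t) - \widetilde b_{\theta_0}(X_t)}^2 - \abs{\widetilde b_\theta(Y_t) - \widetilde b_{\theta_0}(Y_t)}^2}\,] \le C' e^{-\alpha t/4}$ uniformly in $\theta$, so $\tfrac1T\int_0^T$ of this difference $\to 0$ a.s. Since $Y_t$ is a reversible ergodic diffusion with invariant measure $\mu_\infty$ (its drift is the gradient of the uniformly convex potential $V + W * f_\infty$, with constant $\alpha+\gamma$ by \cref{lem:convexity_convolution}), the ergodic theorem yields $\tfrac1T\int_0^T \abs{\widetilde b_\theta(Y_t) - \widetilde b_{\theta_0}(Y_t)}^2 \dd t \to G(\theta)$ a.s. Uniformity in $\theta$ follows from equicontinuity: $\theta\mapsto\tfrac1T Q_T(\theta)$ is Lipschitz on $\Theta$ with constant a time-average of a polynomially bounded functional of $X_t$, a.s.\ bounded by the moment control, so pointwise a.s.\ convergence on a countable dense subset of $\Theta$ upgrades to uniform a.s.\ convergence.

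For the martingale term, $\langle M(\theta)\rangle_T = Q_T(\theta) = O(T)$ a.s.\ by the previous step, so the strong law of large numbers for continuous local martingales gives $\tfrac1T M_T(\theta) \to 0$ a.s.\ pointwise; uniformity follows as above from a Burkholder--Davis--Gundy estimate for the increments $M_T(\theta_1) - M_T(\theta_2)$, whose quadratic variation is bounded by $\abs{\theta_1-\theta_2}^2$ times a time-average of a polynomially bounded functional of $X_t$. Granting these two facts, the conclusion is routine: since $\widetilde\theta_T$ maximizes $\widetilde\ell_T$ we have $\tfrac\beta4 Q_T(\widetilde\theta_T) \le \tfrac\beta4 Q_T(\theta_0) + \tfrac\beta2(M_T(\widetilde\theta_T) - M_T(\theta_0))$, whose right-hand side divided by $T$ tends to $0$ a.s.\ (because $\tfrac1T Q_T(\theta_0) \to G(\theta_0) = 0$ and $\tfrac1T\sup_\theta\abs{M_T(\theta)} \to 0$), so $\tfrac1T Q_T(\widetilde\theta_T) \to 0$ a.s.; were $\widetilde\theta_T \not\to \theta_0$, along a subsequence $\abs{\widetilde\theta_{T_k} - \theta_0}\ge\epsilon$ and then $\tfrac1{T_k} Q_{T_k}(\widetilde\theta_{T_k}) \ge \inf_{\abs{\theta-\theta_0}\ge\epsilon}\tfrac1{T_k} Q_{T_k}(\theta) \to \inf_{\abs{\theta-\theta_0}\ge\epsilon} G(\theta) > 0$, a contradiction. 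I expect the main obstacle to be the uniform-in-$\theta$, \emph{almost sure} convergence of $\tfrac1T Q_T(\theta)$ — specifically transferring the ergodic average from the non-stationary nonlinear process $X_t$ to the genuinely ergodic linearized process $Y_t$ via the coupling of \cref{thm:coupling_whole}, and establishing the equicontinuity required for uniformity — together with the analogous uniform control of the stochastic-integral term.
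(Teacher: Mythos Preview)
Your proposal is correct and follows the same M-estimator skeleton as the paper's proof: use the maximizer inequality $\widetilde\ell_T(\widetilde\theta_T)\ge\widetilde\ell_T(\theta_0)$ to bound $\tfrac1T Q_T(\widetilde\theta_T)$ by terms that vanish, then conclude via the identifiability assumption. Your handling of the martingale term (SLLN for local martingales plus a uniform-in-$\theta$ modulus of continuity) parallels the paper's \cref{lem:convergence_likelihood1}, which quotes \cite{BoB83}; your Lipschitz/equicontinuity argument for uniformity is exactly the content of the paper's \cref{lem:f_Lipschitz}.

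The one genuine difference is how you justify the pointwise limit $\tfrac1T\int_0^T\abs{\widetilde b_\theta(X_t)-\widetilde b_{\theta_0}(X_t)}^2\dd t\to G(\theta)$. The paper simply writes ``by the ergodic theorem'' for the nonlinear process $X_t$, which is a nontrivial statement for a McKean SDE and is left implicit. You instead couple $X_t$ to the linearized diffusion $Y_t$ via \cref{thm:coupling_whole}, transfer the time average to $Y_t$ using the exponential $L^2$ bound, and then invoke the classical ergodic theorem for the reversible diffusion $Y_t$. This is more explicit and self-contained, and it has the pleasant feature of making the MLE argument rest directly on the paper's own linearization machinery rather than on an external ergodic theorem for nonlinear processes. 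One small point: your equicontinuity argument needs Lipschitz dependence of $\theta\mapsto\widetilde b_\theta$, which in turn requires regularity of $\theta\mapsto f_\infty(\cdot;\theta)$; \cref{as:MLE}\ref{it:Lipschitz} alone does not give this, and the paper invokes \cite{BSV16} for it in the proof of \cref{lem:f_Lipschitz}.
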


\begin{remark}
We reiterate that our observation model is that of a single (nonstationary) path of the McKean SDE. Ideally, we would like to consider the case where we observe one or more paths of the interacting particle system. To show asymptotic unbiasedness of the linearized MLE in this setting, we would need to approximate the path of the nonlinear process by a path of the particle system, using the uniform propagation of chaos property. We will omit the details. Needless to say, in the numerical experiments we observe a \emph{single} trajectory of the particle system.
\end{remark}

Our theoretical analysis is inspired by the proof of \cite[Corollary 4.1]{BoB83}, where the linear case is treated. In fact, in our approach the likelihood function is computed from the linear process, and the nonlinearity only appears in the observation model. The proof of~\cref{thm:MLE_lin} relies on the following preliminary results.

\begin{lemma} \label{lem:convergence_likelihood2}
Under \cref{as:whole,as:MLE}, it holds for all $\theta\in\Theta$
\begin{equation}
\lim_{T\to\infty} \frac1T \int_0^T \abs{\widetilde b_\theta(X_t) - b_{\theta_0}(t,X_t)}^2 \dd t = \int_{\R^d} \abs{\widetilde b_\theta(x) - \widetilde b_{\theta_0}(x)}^2 f_\infty(x;\theta_0) \dd x \qquad \text{a.s.}
\end{equation}
\end{lemma}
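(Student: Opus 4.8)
The plan is to split the integrand into a "stationary" part and an error part, and to handle each separately. Write
\begin{equation}
\widetilde b_\theta(X_t) - b_{\theta_0}(t,X_t) = \bigl(\widetilde b_\theta(X_t) - \widetilde b_{\theta_0}(X_t)\bigr) + \bigl(\widetilde b_{\theta_0}(X_t) - b_{\theta_0}(t,X_t)\bigr),
\end{equation}
and note that the second bracket equals $(\nabla W(\cdot;\theta_0) * (f_t(\cdot;\theta_0) - f_\infty(\cdot;\theta_0)))(X_t)$, which by Cauchy--Schwarz and \eqref{eq:M_nablaW_whole}, \eqref{eq:convergenceL1_density_whole} is bounded in $L^2$ by a constant times $e^{-\alpha t/4}$ (in expectation), hence its time-average contribution vanishes. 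Expanding the square, the cross term is controlled by Cauchy--Schwarz in time against this decaying factor, so the whole problem reduces to showing
\begin{equation}
\lim_{T\to\infty} \frac1T \int_0^T \abs{\widetilde b_\theta(X_t) - \widetilde b_{\theta_0}(X_t)}^2 \dd t = \int_{\R^d} \abs{\widetilde b_\theta(x) - \widetilde b_{\theta_0}(x)}^2 f_\infty(x;\theta_0) \dd x \qquad \text{a.s.}
\end{equation}

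First I would observe that $\widetilde b_\theta - \widetilde b_{\theta_0}$ is a \emph{deterministic} function of $x$ alone (no time dependence), and it is polynomially bounded uniformly on the compact set $\Theta$ by \cref{as:MLE}\ref{it:Lipschitz} together with the polynomial bounds in \cref{as:whole}. Thus $\abs{\widetilde b_\theta(\cdot) - \widetilde b_{\theta_0}(\cdot)}^2$ is a fixed polynomially-bounded test function. The natural route is then an ergodic-type law of large numbers along the path of $X_t$: because the McKean SDE converges exponentially fast to $\mu_\infty$ both in $L^1$ and in Wasserstein (indeed by \cref{thm:entropy_whole} one can compare to the linear process $Y_t$, which is a genuine time-homogeneous ergodic diffusion reversible w.r.t.\ $\mu_\infty$), the empirical time-average of any sufficiently integrable observable converges a.s.\ to its $\mu_\infty$-average. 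Concretely, I would invoke the law of large numbers for the ergodic diffusion $Y_t$ (the linearized process), for which $\frac1T\int_0^T \phi(Y_t)\dd t \to \int \phi \dd\mu_\infty$ a.s.\ for $\phi$ polynomially bounded, and then transfer this to $X_t$ using the $L^2$ pathwise coupling bound in \cref{thm:coupling_whole}, which gives $\E[\abs{X_t - Y_t}^2] \le C e^{-\alpha t/2}$; combined with the local Lipschitz / polynomial growth of $\phi = \abs{\widetilde b_\theta - \widetilde b_{\theta_0}}^2$ and the uniform-in-time moment bounds from \cite{Mal01}, the difference of time-averages along $X$ and along $Y$ tends to zero a.s. (e.g.\ via a Borel--Cantelli argument on $\frac1T\int_0^T \abs{\phi(X_t)-\phi(Y_t)}\dd t$).

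The main obstacle I expect is justifying the a.s.\ ergodic averaging rigorously for the (genuinely non-time-homogeneous, but asymptotically homogeneous) process $X_t$, and in particular controlling the growth of the polynomial observable along the path uniformly enough to promote $L^1$-convergence of time averages to almost-sure convergence. One clean way around this: establish convergence of $\frac1T\E\int_0^T \phi(X_t)\dd t$ using \eqref{eq:convergenceL1_density_whole} and the uniform moment bounds (this is immediate), then upgrade to almost-sure convergence by a maximal/martingale argument — writing $\int_0^T \phi(X_t)\dd t$ via the Poisson equation $\mathcal L_\infty u = \phi - \int\phi\dd\mu_\infty$ for the \emph{linearized} generator $\mathcal L_\infty$, applying Itô's formula, and showing the resulting martingale term is $o(T)$ a.s.\ (law of large numbers for martingales) while the remainder terms — which involve $u(X_0) - u(X_T)$ and the discrepancy $(\mathcal L_t - \mathcal L_\infty)u(X_t) = (\nabla W * (f_t - f_\infty))(X_t)\cdot\nabla u(X_t)$ — are $o(T)$ by the exponential decay \eqref{eq:convergenceL1_density_whole} and moment bounds. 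Solvability and polynomial growth of $u$ follow from standard elliptic regularity for the uniformly convex potential $V + W * f_\infty$ (using \cref{lem:convexity_convolution}). The remaining steps — the Cauchy--Schwarz cross-term estimate and collecting the pieces — are routine.
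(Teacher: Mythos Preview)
Your decomposition is exactly the paper's: write $\widetilde b_\theta - b_{\theta_0} = (\widetilde b_\theta - \widetilde b_{\theta_0}) + (\widetilde b_{\theta_0} - b_{\theta_0})$, expand the square, kill the second square and the cross term via the exponential $L^1$ decay \eqref{eq:convergenceL1_density_whole} together with Cauchy--Schwarz, and reduce to the time average of $|\widetilde b_\theta - \widetilde b_{\theta_0}|^2$ along $X_t$. For that last term the paper simply writes ``by the ergodic theorem'' and moves on; you instead supply two concrete mechanisms (transfer from the genuinely ergodic linear diffusion $Y_t$ via the pathwise coupling of \cref{thm:coupling_whole}, or the Poisson-equation / martingale LLN argument for the linearized generator with the drift discrepancy controlled by \eqref{eq:convergenceL1_density_whole}). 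Your extra care here is appropriate, since $X_t$ is not a time-homogeneous Markov process and the applicability of a standard ergodic theorem is not entirely immediate; either of your routes closes that gap and the remainder of your argument matches the paper.
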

\begin{proof}
Let us denote by $I_T$ the quantity in the left-hand side of the result, and notice that the following decomposition holds true
\begin{equation}
\begin{aligned}
I_T &= \frac1T \int_0^T \abs{\widetilde b_\theta(X_t) - b_{\theta_0}(t,X_t)}^2 \dd t \\
&=  \frac1T \int_0^T \abs{\widetilde b_\theta(X_t) - \widetilde b_{\theta_0}(X_t)}^2 \dd t \\
&\quad + \frac1T \int_0^T \abs{\widetilde b_{\theta_0}(X_t) - b_{\theta_0}(t,X_t)}^2 \dd t \\
&\quad + \frac2T \int_0^T \left( \widetilde b_\theta(X_t) - \widetilde b_{\theta_0}(X_t) \right)^\top \left( \widetilde b_{\theta_0}(X_t) - b_{\theta_0}(t,X_t) \right) \dd t \\
&= I_T^{(1)} + I_T^{(2)} + I_T^{(3)}.
\end{aligned}                                                                   
\end{equation}
By the ergodic theorem we first have
\begin{equation}
\lim_{T\to\infty} I_T^{(1)} = \int_{\R^d} \abs{\widetilde b_\theta(x) - \widetilde b_{\theta_0}(x)}^2 f_\infty(x;\theta_0) \dd x \qquad \text{a.s.}
\end{equation}
We then consider $I_T^{(2)}$ and, due to the definition \eqref{eq:def_b} and the Cauchy--Schwarz inequality, we get
\begin{equation}
\begin{aligned}
I_T^{(2)} &\le \frac1T \int_0^T \left( \int_{\R^d} \abs{\nabla W(X_t - y; \theta_0)} \abs{f_t(y;\theta_0) - f_\infty(y; \theta_0)} \dd y \right)^2 \dd t \\
&\le \frac1T \int_0^T \left( \int_{\R^d} \abs{\nabla W(X_t - y; \theta_0)}^2 \left( f_t(y;\theta_0) + f_\infty(y; \theta_0) \right) \dd y \right) \dd t \\
&\hspace{1cm} \times \norm{f_t(\cdot;\theta_0) - f_\infty(\cdot; \theta_0)}_{L^1(\R^d)},
\end{aligned}
\end{equation}
which, by equation \eqref{eq:convergenceL1_density_whole}, implies
\begin{equation}
I_T^{(2)} \le \frac{K}{T} e^{-\frac12 \alpha t} \int_0^T \left( \int_{\R^d} \abs{\nabla W(X_t - y; \theta_0)}^2 \left( f_t(y;\theta_0) + f_\infty(y; \theta_0) \right) \dd y \right) \dd t.
\end{equation}
Then, by the fact that $\nabla W$ is polynomially bounded, the uniform boundedness of the moments, and the ergodic theorem, we deduce that $I_T^{(2)} \to 0$ almost surely. Moreover, by Cauchy--Schwarz inequality we have
\begin{equation}
I_T^{(3)} \le 2 \sqrt{I_T^{(1)} I_T^{(2)}},
\end{equation}
which yields that $I_T^{(3)}$ converges to zero almost surely. The desired result then follows from the convergence of the terms $I_T^{(1)}, I_T^{(2)}, I_T^{(3)}$.
\end{proof}

\begin{corollary} \label{cor:convergence_likelihood2}
Under \cref{as:whole,as:MLE}, it holds
\begin{equation}
\lim_{T\to\infty} \frac1T \int_0^T \left( \widetilde b_{\theta_0}(X_t) - b_{\theta_0}(t,X_t) \right)^2 \dd t = 0 \qquad \text{a.s.}
\end{equation}
\end{corollary}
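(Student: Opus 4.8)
The plan is to observe that the left-hand side is precisely the quantity $I_T^{(2)}$ that already appeared in the proof of \cref{lem:convergence_likelihood2}, so that the corollary is nothing but the statement $I_T^{(2)} \to 0$ recorded for later use; I will spell out the argument in self-contained form. The starting point is the identity, valid for $X_t \sim f_t(\cdot;\theta_0)$ and coming directly from the definitions \eqref{eq:def_b},
\begin{equation*}
\widetilde b_{\theta_0}(X_t) - b_{\theta_0}(t,X_t) = \left(\nabla W(\cdot;\theta_0) * (f_t(\cdot;\theta_0) - f_\infty(\cdot;\theta_0))\right)(X_t),
\end{equation*}
which exhibits the integrand $\abs{\widetilde b_{\theta_0}(X_t) - b_{\theta_0}(t,X_t)}^2$ as the squared norm of a convolution of $\nabla W(\cdot;\theta_0)$ against the signed density $f_t(\cdot;\theta_0) - f_\infty(\cdot;\theta_0)$.

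Next I would apply the Cauchy--Schwarz inequality exactly as in the bound for $I_T^{(2)}$ in \cref{lem:convergence_likelihood2}, using $\abs{f_t - f_\infty} \le f_t + f_\infty$, to obtain
\begin{equation*}
\abs{\widetilde b_{\theta_0}(X_t) - b_{\theta_0}(t,X_t)}^2 \le \left( \int_{\R^d} \abs{\nabla W(X_t - y;\theta_0)}^2 (f_t(y;\theta_0) + f_\infty(y;\theta_0)) \dd y \right) \norm{f_t(\cdot;\theta_0) - f_\infty(\cdot;\theta_0)}_{L^1(\R^d)},
\end{equation*}
and then use the exponential $L^1$-convergence \eqref{eq:convergenceL1_density_whole} to replace the last factor by $K e^{-\alpha t/2}$, so that
\begin{equation*}
\frac1T \int_0^T \abs{\widetilde b_{\theta_0}(X_t) - b_{\theta_0}(t,X_t)}^2 \dd t \le \frac{K}{T} \int_0^T e^{-\alpha t/2} \left( \int_{\R^d} \abs{\nabla W(X_t - y;\theta_0)}^2 (f_t(y;\theta_0) + f_\infty(y;\theta_0)) \dd y \right) \dd t.
\end{equation*}

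To conclude I would bound the inner integral by a polynomial in $\abs{X_t}$: since $\nabla W(\cdot;\theta_0)$ is polynomially bounded and both $f_t(\cdot;\theta_0)$ and $f_\infty(\cdot;\theta_0)$ have all moments bounded uniformly in $t$ (by \cite{Mal01}, as already used for \eqref{eq:M_nablaW_whole}), there exist $C > 0$ and $p \in \N$, independent of $t$, such that the inner integral is at most $C(1 + \abs{X_t}^p)$. Then $\E\bigl[\int_0^\infty e^{-\alpha t/2}(1 + \abs{X_t}^p)\dd t\bigr] \le C\bigl(1 + \sup_{t \ge 0}\E[\abs{X_t}^p]\bigr)\int_0^\infty e^{-\alpha t/2}\dd t < \infty$, so by Tonelli's theorem $\int_0^\infty e^{-\alpha t/2}(1 + \abs{X_t}^p)\dd t < \infty$ almost surely, whence $\frac1T \int_0^T e^{-\alpha t/2}(1 + \abs{X_t}^p)\dd t \to 0$ almost surely; combined with the previous display this yields the claim. (Alternatively, one may split off $\frac1T\int_0^T e^{-\alpha t/2}\dd t \le \tfrac{2}{\alpha T}$ and apply the ergodic theorem to $\frac1T\int_0^T(1 + \abs{X_t}^p)\dd t$, as in \cref{lem:convergence_likelihood2}.) There is no real obstacle here; the only mildly delicate point is the interplay between the deterministic exponential decay in $t$ and the random, unbounded moment integral $\abs{X_t}^p$, which the Tonelli/uniform-moment argument dispatches cleanly.
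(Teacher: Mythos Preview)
Your proposal is correct and is essentially the same argument as the paper's, only spelled out: the paper simply applies \cref{lem:convergence_likelihood2} with $\theta = \theta_0$, so that the right-hand side vanishes, whereas you re-derive the $I_T^{(2)} \to 0$ part of that lemma's proof directly.
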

\begin{proof}
The desired result follows from \cref{lem:convergence_likelihood2} by replacing $\theta$ with $\theta_0$.
\end{proof}

\begin{lemma} \label{lem:convergence_likelihood1}
Under \cref{as:whole,as:MLE}, it holds
\begin{equation}
\lim_{T\to\infty} \sup_{\theta \in \Theta} \frac1T \int_0^T \left( \widetilde b_\theta(X_t) - b_{\theta_0}(t,X_t) \right) \dd B_t = 0 \qquad \text{a.s.}
\end{equation}
\end{lemma}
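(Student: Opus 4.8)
The plan is to regard, for each $\theta\in\Theta$, the process $M_T(\theta)\defeq\int_0^T(\widetilde b_\theta(X_t)-b_{\theta_0}(t,X_t))\dd B_t$ as a continuous martingale, to bound the moments of $T^{-1}M_T(\theta)$ uniformly in $\theta$, and then to upgrade these bounds to an almost sure \emph{uniform} statement by a Kolmogorov--Chentsov argument applied to the random field $\theta\mapsto T^{-1}M_T(\theta)$ on the compact set $\Theta$, followed by Borel--Cantelli. It suffices to show $\sup_{\theta\in\Theta}|T^{-1}M_T(\theta)|\to 0$ a.s., which is stronger than the signed statement. For intuition, the pointwise-in-$\theta$ convergence $T^{-1}M_T(\theta)\to 0$ a.s.\ already follows from \cref{lem:convergence_likelihood2}, which identifies $T^{-1}\langle M(\theta)\rangle_T=T^{-1}\int_0^T|\widetilde b_\theta(X_t)-b_{\theta_0}(t,X_t)|^2\dd t$ with a finite almost sure limit, together with the strong law of large numbers for continuous martingales; but for the supremum over $\theta$ the moment route is cleaner.

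First I would obtain the moment bounds. By the Burkholder--Davis--Gundy inequality and H\"older's inequality in time, $\E[|M_T(\theta)|^{2m}]\le C_m T^{m-1}\int_0^T\E[|\widetilde b_\theta(X_t)-b_{\theta_0}(t,X_t)|^{2m}]\dd t$ for integer $m\ge1$; since $\nabla V,\nabla W$ are polynomially bounded and the law of $X_t$ has moments of all orders uniformly in $t$ (by \cref{as:whole} and \cite{Mal01}), the integrand is bounded in $t$, so $\E[|T^{-1}M_T(\theta)|^{2m}]\le C_m T^{-m}$. Applying the same computation to $\widetilde b_\theta(X_t)-\widetilde b_{\theta'}(X_t)$ yields $\E[|T^{-1}(M_T(\theta)-M_T(\theta'))|^{2m}]\le C_m T^{-m}|\theta-\theta'|^{2m}$, provided one has the pointwise bound $|\widetilde b_\theta(x)-\widetilde b_{\theta'}(x)|\le \bar L(x)\,|\theta-\theta'|$ with $\bar L$ polynomially bounded (discussed below). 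Granting this, Kolmogorov--Chentsov applied to $\theta\mapsto T^{-1}M_T(\theta)$ on $\Theta\subset\R^p$, with $2m>p$, produces a H\"older-continuous version and the estimate $\E[\sup_{\theta\in\Theta}|T^{-1}M_T(\theta)|^{2m}]\le C\,T^{-m}$. Taking $m\ge 2$, Markov's inequality and Borel--Cantelli give $\sup_{\theta\in\Theta}|n^{-1}M_n(\theta)|\to0$ a.s.\ along integers $n$, and a maximal inequality controlling $\sup_{t\in[n,n+1]}\sup_{\theta}|M_t(\theta)-M_n(\theta)|$ (again via BDG plus the moment bounds) bridges the gap to arbitrary $T$.

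The Lipschitz-in-$\theta$ bound on $\widetilde b_\theta$ is the only delicate point and the one I expect to be the main obstacle. Writing $\widetilde b_\theta(x)-\widetilde b_{\theta'}(x)=-(\nabla V(x;\theta)-\nabla V(x;\theta'))-((\nabla W(\cdot;\theta)-\nabla W(\cdot;\theta'))*f_\infty(\cdot;\theta))(x)-(\nabla W(\cdot;\theta')*(f_\infty(\cdot;\theta)-f_\infty(\cdot;\theta')))(x)$, the first two terms are controlled by \cref{as:MLE}\ref{it:Lipschitz} together with the uniform moment bounds on $f_\infty$, but the last term requires Lipschitz dependence of the invariant density $\theta\mapsto f_\infty(\cdot;\theta)$ on $\theta$ in a weighted $L^1$ norm strong enough to absorb the polynomial growth of $\nabla W$. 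This should follow from the Kirkwood--Monroe fixed-point equation \eqref{eq:integral_equation} and the convexity assumptions of \cref{as:whole} (which underlie the uniqueness proof in \cite{Mal01}), e.g.\ by differentiating the fixed-point relation in $\theta$ or by a contraction/implicit-function argument in the appropriate weighted space; alternatively, in the special case where $\widetilde b_\theta$ is affine in $\theta$ (cf.\ the remark following \cref{as:MLE}) the bound is immediate. Everything else is a routine combination of BDG, moment estimates, and Kolmogorov--Chentsov.
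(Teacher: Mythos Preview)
Your proposal is correct and takes a genuinely different route from the paper. The paper's proof is much shorter and essentially outsources the work to \cite{BoB83}: it first observes that the random field $e(T,\theta)\defeq T^{-1}\int_0^T(\widetilde b_\theta(X_t)-b_{\theta_0}(t,X_t))\dd B_t$ is uniformly continuous in $\theta$ on $\Theta$, uniformly in $T\in[1,\infty)$, almost surely, by \cite[Theorem 3.1]{BoB83}; then it gets the pointwise convergence $e(T,\theta)\to 0$ a.s.\ from \cref{lem:convergence_likelihood2} together with the strong law of large numbers for martingales (exactly the observation you mention at the start); and finally it upgrades pointwise to uniform convergence on the compact $\Theta$ via the equicontinuity, following \cite[Lemma 4.1]{BoB83}. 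So the paper uses a soft compactness argument rather than your quantitative moment bounds plus Kolmogorov--Chentsov plus Borel--Cantelli.

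Both approaches hinge on the same delicate point you flag: Lipschitz dependence of $\widetilde b_\theta$ on $\theta$, which in turn requires regularity of $\theta\mapsto f_\infty(\cdot;\theta)$. The paper does not avoid this; the cited result from \cite{BoB83} needs Lipschitz-in-$\theta$ of the integrand, and in the subsequent \cref{lem:f_Lipschitz} the paper invokes differentiability of the invariant measure in $\theta$ by citing \cite{BSV16}. So your concern is well placed and is resolved in the paper by reference rather than by argument. What your approach buys is a self-contained, quantitative proof (with an explicit rate $T^{-m}$ for the $2m$-th moment of the sup), at the cost of more machinery; what the paper's approach buys is brevity by leaning on the Borkar--Bagchi framework, at the cost of being less transparent about where the regularity in $\theta$ actually enters.
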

\begin{proof}
Let us define the function
\begin{equation}
e(T,\theta) =  \frac1T \int_0^T \left( \widetilde b_\theta(X_t) - b_{\theta_0}(t,X_t) \right) \dd B_t,
\end{equation}
which is uniformly continuous in $\theta$ on $\Theta$, uniformly in $T \in [1,\infty)$, with probability $1$, by \cite[Theorem 3.1]{BoB83}. Then, by \cref{lem:convergence_likelihood2} and the strong law of large numbers for martingales we deduce that for all $\theta \in \Theta$
\begin{equation}
\lim_{T \to \infty} e(T, \theta) = 0 \qquad \text{a.s.}
\end{equation}
Finally, the result is obtained following the same argument as in the proof of \cite[Lemma 4.1]{BoB83}.
\end{proof}

\begin{lemma} \label{lem:f_Lipschitz}
Under \cref{as:whole,as:MLE}, there exists a constant $L > 0$, independent of $T$, such that for all $\theta_1, \theta_2 \in \Theta$
\begin{equation}
\abs{\frac1T \int_0^T \abs{\widetilde b_{\theta_1}(X_t) - b_{\theta_0}(t,X_t)}^2 \dd t - \frac1T \int_0^T \abs{\widetilde b_{\theta_2}(X_t) - b_{\theta_0}(t,X_t)}^2 \dd t} \le L \abs{\theta_1 - \theta_2}.
\end{equation}
\end{lemma}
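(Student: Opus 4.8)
The plan is to reduce the claim to a pointwise-in-$t$ Lipschitz bound for the integrand and then to a uniform-in-$T$ control of a time-average of a polynomially growing functional of $X_t$. Write $\phi_j(t) \defeq \abs{\widetilde b_{\theta_j}(X_t) - b_{\theta_0}(t,X_t)}^2$ for $j=1,2$, and use the identity $\abs{a}^2 - \abs{b}^2 = (a-b)^\top(a+b)$ with $a = \widetilde b_{\theta_1}(X_t) - b_{\theta_0}(t,X_t)$ and $b = \widetilde b_{\theta_2}(X_t) - b_{\theta_0}(t,X_t)$, so that $a-b = \widetilde b_{\theta_1}(X_t) - \widetilde b_{\theta_2}(X_t)$. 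By Cauchy--Schwarz,
\begin{equation*}
\abs{\phi_1(t) - \phi_2(t)} \le \abs{\widetilde b_{\theta_1}(X_t) - \widetilde b_{\theta_2}(X_t)} \left( \abs{\widetilde b_{\theta_1}(X_t)} + \abs{\widetilde b_{\theta_2}(X_t)} + 2 \abs{b_{\theta_0}(t,X_t)} \right).
\end{equation*}

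Next I would bound the two factors separately, uniformly in $\theta_1,\theta_2\in\Theta$ and $t\ge 0$. For the second factor, from \eqref{eq:def_b} each summand is dominated by $\abs{\nabla V(x;\theta)} + (\abs{\nabla W(\cdot;\theta)} * f)(x)$ with $f$ one of $f_\infty(\cdot;\theta_j)$, $f_t(\cdot;\theta_0)$; since $\nabla V(\cdot;\theta)$ and $\nabla W(\cdot;\theta)$ are polynomially bounded uniformly in $\theta\in\Theta$ (by \cref{as:whole} at $\theta_0$ together with the Lipschitz bound \cref{as:MLE}\ref{it:Lipschitz} and compactness of $\Theta$), $f_t$ has moments of all orders uniformly in time, and $f_\infty(\cdot;\theta)$ uniformly in $\theta\in\Theta$, this factor is $\le Q_2(X_t)$ with $Q_2$ polynomially bounded and independent of $\theta_1,\theta_2,t$. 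For the first factor I would decompose $\widetilde b_{\theta_1}-\widetilde b_{\theta_2}$ (using \eqref{eq:def_b}) into the difference $\nabla V(\cdot;\theta_2)-\nabla V(\cdot;\theta_1)$, the term $(\nabla W(\cdot;\theta_2)-\nabla W(\cdot;\theta_1)) * f_\infty(\cdot;\theta_1)$, and the term $\nabla W(\cdot;\theta_1) * (f_\infty(\cdot;\theta_2)-f_\infty(\cdot;\theta_1))$. The first two are bounded by $\bigl(L_V(x) + (L_W * f_\infty(\cdot;\theta_1))(x)\bigr)\abs{\theta_1-\theta_2}$ via \cref{as:MLE}\ref{it:Lipschitz}, with polynomially bounded coefficient. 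For the third I would invoke the Lipschitz dependence of the invariant density on the parameter --- for every $p$ there is $C_p$ with $\int_{\R^d}(1+\abs{y})^p\abs{f_\infty(y;\theta_1)-f_\infty(y;\theta_2)}\dd y\le C_p\abs{\theta_1-\theta_2}$ --- which follows from a perturbation analysis of the Kirkwood--Monroe equation \eqref{eq:integral_equation} under \cref{as:whole,as:MLE} (differentiating the fixed-point relation in $\theta$, or a contraction-mapping argument); together with the polynomial bound on $\nabla W(\cdot;\theta_1)$ this gives $\abs{\widetilde b_{\theta_1}(x)-\widetilde b_{\theta_2}(x)}\le Q_1(x)\abs{\theta_1-\theta_2}$ with $Q_1$ polynomially bounded and independent of $\theta_1,\theta_2$.

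Combining, $\abs{\phi_1(t)-\phi_2(t)} \le \mathcal P(X_t)\abs{\theta_1-\theta_2}$ with $\mathcal P \defeq Q_1 Q_2$ polynomially bounded, whence
\begin{equation*}
\abs{\frac1T\int_0^T\phi_1(t)\dd t - \frac1T\int_0^T\phi_2(t)\dd t} \le \left(\frac1T\int_0^T\mathcal P(X_t)\dd t\right)\abs{\theta_1-\theta_2}.
\end{equation*}
It then remains to bound $\frac1T\int_0^T\mathcal P(X_t)\dd t$ uniformly in $T$: by the uniform-in-time moment bounds on $X_t$ this holds in expectation with a deterministic constant, and pathwise the average converges a.s.\ to $\int_{\R^d}\mathcal P\dd\mu_\infty<\infty$ by the ergodic theorem, hence is a.s.\ bounded over $T\in[1,\infty)$, which furnishes the constant $L$ used in \cref{lem:convergence_likelihood1} and the proof of \cref{thm:MLE_lin}. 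The main obstacle is precisely this regularity input in the second step --- the weighted-$L^1$ Lipschitz continuity of $\theta\mapsto f_\infty(\cdot;\theta)$ --- since it is the only ingredient not immediate from the stated hypotheses and requires the perturbation analysis of \eqref{eq:integral_equation}; everything else is Cauchy--Schwarz combined with polynomial growth and the moment bounds already in force.
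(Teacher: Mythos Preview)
Your proposal is correct and follows essentially the same route as the paper. Both start from the identity $\abs{a}^2-\abs{b}^2=(a-b)^\top(a+b)$, use the same three-term decomposition of $\widetilde b_{\theta_1}-\widetilde b_{\theta_2}$, and identify the Lipschitz dependence $\theta\mapsto f_\infty(\cdot;\theta)$ as the one nontrivial input (the paper cites \cite{BSV16} for differentiability of the invariant measure, where you sketch a perturbation argument for the fixed-point equation \eqref{eq:integral_equation}). The only organizational difference is that the paper applies Cauchy--Schwarz in $L^2([0,T])$ to split the time integral into two factors and then invokes \cref{lem:convergence_likelihood2} to bound the factor $\frac1T\int_0^T\abs{\widetilde b_{\theta_1}+\widetilde b_{\theta_2}-2b_{\theta_0}}^2\dd t$ uniformly in $T$, whereas you bound each factor pointwise by a polynomial in $X_t$ and then control the time-average of $\mathcal P(X_t)$ directly via the ergodic theorem and the uniform moment bounds; either variant works.
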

\begin{proof}
In order to show the result, we define the function 
\begin{equation} \label{eq:h_def}
h(T,\theta) = \frac1T \int_0^T \abs{\widetilde b_\theta(X_t) - b_{\theta_0}(t,X_t)}^2 \dd t,
\end{equation}
and show that it is Lipschitz in $\theta$, uniformly in $T$. By the Cauchy--Schwarz inequality, we have for a constant $B>0$
\begin{equation}
\begin{aligned}
&\abs{h(T, \theta_1) - h(T, \theta_2)} = \abs{\frac1T \int_0^T \abs{\widetilde b_{\theta_1}(X_t) - b_{\theta_0}(t,X_t)}^2 \dd t - \frac1T \int_0^T \abs{\widetilde b_{\theta_2}(X_t) - b_{\theta_0}(t,X_t)}^2 \dd t} \\
&\hspace{1.5cm}\le \frac1T \int_0^T \abs{\widetilde b_{\theta_1}(X_t) - \widetilde b_{\theta_2}(X_t)} \abs{\widetilde b_{\theta_1}(X_t) + \widetilde b_{\theta_2}(X_t) - 2b_{\theta_0}(t,X_t)} \dd t \\
&\hspace{1.5cm}\le \left(\frac1T \int_0^T \abs{\widetilde b_{\theta_1}(X_t) - \widetilde b_{\theta_2}(X_t)}^2 \right)^{1/2} \left(\frac1T \int_0^T \abs{\widetilde b_{\theta_1}(X_t) + \widetilde b_{\theta_2}(X_t) - 2b_{\theta_0}(t,X_t)}^2 \right)^{1/2} \\
&\hspace{1.5cm}\le B \left(\frac1T \int_0^T \abs{\widetilde b_{\theta_1}(X_t) - \widetilde b_{\theta_2}(X_t)}^2 \right)^{1/2},
\end{aligned}
\end{equation}
where the uniform bound in time of the last inequality is due to \cref{lem:convergence_likelihood2}. By the definition of $\widetilde b_\theta$ in equation \eqref{eq:def_b} we obtain
\begin{equation}
\begin{aligned}
\abs{h(T, \theta_1) - h(T, \theta_2)}^2 &\le \frac{B}{T} \int_0^T \abs{\nabla V(X_t; \theta_1) - \nabla V(X_t;\theta_2)}^2 \dd t \\
&+ \frac{B}{T} \int_0^T \left( \int_{\R^d} \abs{\nabla W(X_t - y; \theta_1)} \abs{f_\infty(y; \theta_1) - f_\infty(y; \theta_2)} \dd y \right)^2 \dd t \\
&+ \frac{B}{T} \int_0^T \left( \int_{\R^d} \abs{\nabla W(X_t - y; \theta_1) - \nabla W(X_t - y; \theta_2)} f_\infty(y; \theta_2) \dd y \right)^2 \dd t,
\end{aligned}
\end{equation}
which by Cauchy--Schwarz inequality gives
\begin{equation}
\begin{aligned}
\abs{h(T, \theta_1) - h(T, \theta_2)}^2 &\le \frac{B}{T} \int_0^T \abs{\nabla V(X_t; \theta_1) - \nabla V(X_t;\theta_2)}^2 \dd t \\
&+ \frac{B}{T} \int_0^T \left( \int_{\R^d} \abs{\nabla W(X_t - y; \theta_1)}^2  (f_\infty(y; \theta_1) + f_\infty(y; \theta_2)) \dd y \right) \\
&\hspace{1.1cm} \times \left( \int_{\R^d} \abs{f_\infty(y; \theta_1) - f_\infty(y; \theta_2)} \dd y \right) \dd t \\
&+ \frac{B}{T} \int_0^T \int_{\R^d} \abs{\nabla W(X_t - y; \theta_1) - \nabla W(X_t - y; \theta_2)}^2 f_\infty(y; \theta_2) \dd y \dd t.
\end{aligned}
\end{equation}
Moreover, since $\nabla V$ and $\nabla W$ are Lipschitz in $\theta$ by \cref{as:MLE}\ref{it:Lipschitz} and polynomially bounded in $x$, using the uniform boundedness of the moments and the ergodic theorem, and due to the differentiability of the invariant measure with respect to $\theta$ \cite{BSV16}, we have that there exists a constant $L>0$, independent of $T$, such that
\begin{equation}
\abs{h(T, \theta_1) - h(T, \theta_2)}^2 \le L^2 \abs{\theta_1 - \theta_2}^2,
\end{equation}
which yields the desired result.
\end{proof}

We can finally prove \cref{thm:MLE_lin}.

\begin{proof}[Proof of~\cref{thm:MLE_lin}]
By definition, the estimator $\widetilde\theta_T$ satisfies for all $\theta \in \Theta$
\begin{equation}
\frac1T \widetilde\ell_T(\widetilde\theta_T) \ge \frac1T \widetilde\ell_T(\theta),
\end{equation}
and, in particular, taking $\theta = \theta_0$, we get by equation \eqref{eq:equivalent_likelihood_tilde}
\begin{equation}
\frac{\widetilde\ell_T(\widetilde\theta_T)}T \ge \frac{\widetilde\ell_T(\theta_0)}T = \frac\beta{2T} \int_0^T \left( \widetilde b_{\theta_0}(X_t) - b_{\theta_0}(t,X_t) \right)^\top \dd B_t - \frac\beta{4T} \int_0^T \abs{\widetilde b_{\theta_0}(X_t) - b_{\theta_0}(t,X_t)}^2 \dd t,
\end{equation}
which implies
\begin{equation}
\begin{aligned}
&\frac\beta{4T} \int_0^T \abs{\widetilde b_{\theta_0}(X_t) - b_{\theta_0}(t,X_t)}^2 \dd t + \left[ \frac\beta{2T} \int_0^T \left( \widetilde b_\theta(X_t) - b_{\theta_0}(t,X_t) \right)^\top \dd B_t \right]_{\theta = \widetilde\theta_T} \\
&\hspace{1.5cm} - \frac\beta{2T} \int_0^T \left( \widetilde b_{\theta_0}(X_t) - b_{\theta_0}(t,X_t) \right)^\top \dd B_t \ge \frac\beta{4T} \int_0^T \abs{\widetilde b_{\widetilde\theta_T}(X_t) - b_{\theta_0}(t,X_t)}^2 \dd t \ge 0.
\end{aligned}
\end{equation}
Applying \cref{cor:convergence_likelihood2,lem:convergence_likelihood1} we obtain
\begin{equation}
\lim_{T \to \infty} \frac1T \int_0^T \abs{\widetilde b_{\widetilde\theta_T}(X_t) - b_{\theta_0}(t,X_t)}^2 \dd t = 0 \qquad \text{a.s.}
\end{equation}
Let us now define the function $h = h(T,\theta)$ as in equation \eqref{eq:h_def}, which is Lipschitz in $\theta$, uniformly in $T$, by \cref{lem:f_Lipschitz}, and notice that
\begin{equation} \label{eq:convergence_f_thetat}
\lim_{T \to \infty} h(T, \widetilde \theta_T) = 0 \qquad \text{a.s.}
\end{equation}
Therefore, we deduce that $\widetilde\theta_T \to \Omega$ as $T \to \infty$ almost surely, where
\begin{equation}
\Omega = \left\{ \theta \in \Theta \colon \lim_{T \to \infty} h(T, \theta) = 0 \right\},
\end{equation}
which by definition \eqref{eq:h_def} and \cref{lem:convergence_likelihood2} coincides with
\begin{equation}
\Omega = \left\{ \theta \in \Theta \colon \int_{\R^d} \abs{\widetilde b_\theta(x) - \widetilde b_{\theta_0}(x)}^2 f_\infty(x;\theta_0) \dd x = 0 \right\}.
\end{equation}
Finally, by the identifiability condition in \cref{as:MLE}\ref{it:identifiability} we get that $\Omega = \{ \theta_0 \}$, and the desired result follows.
\end{proof}

\subsubsection{Numerical examples}

\begin{figure}
\begin{center}
\includegraphics{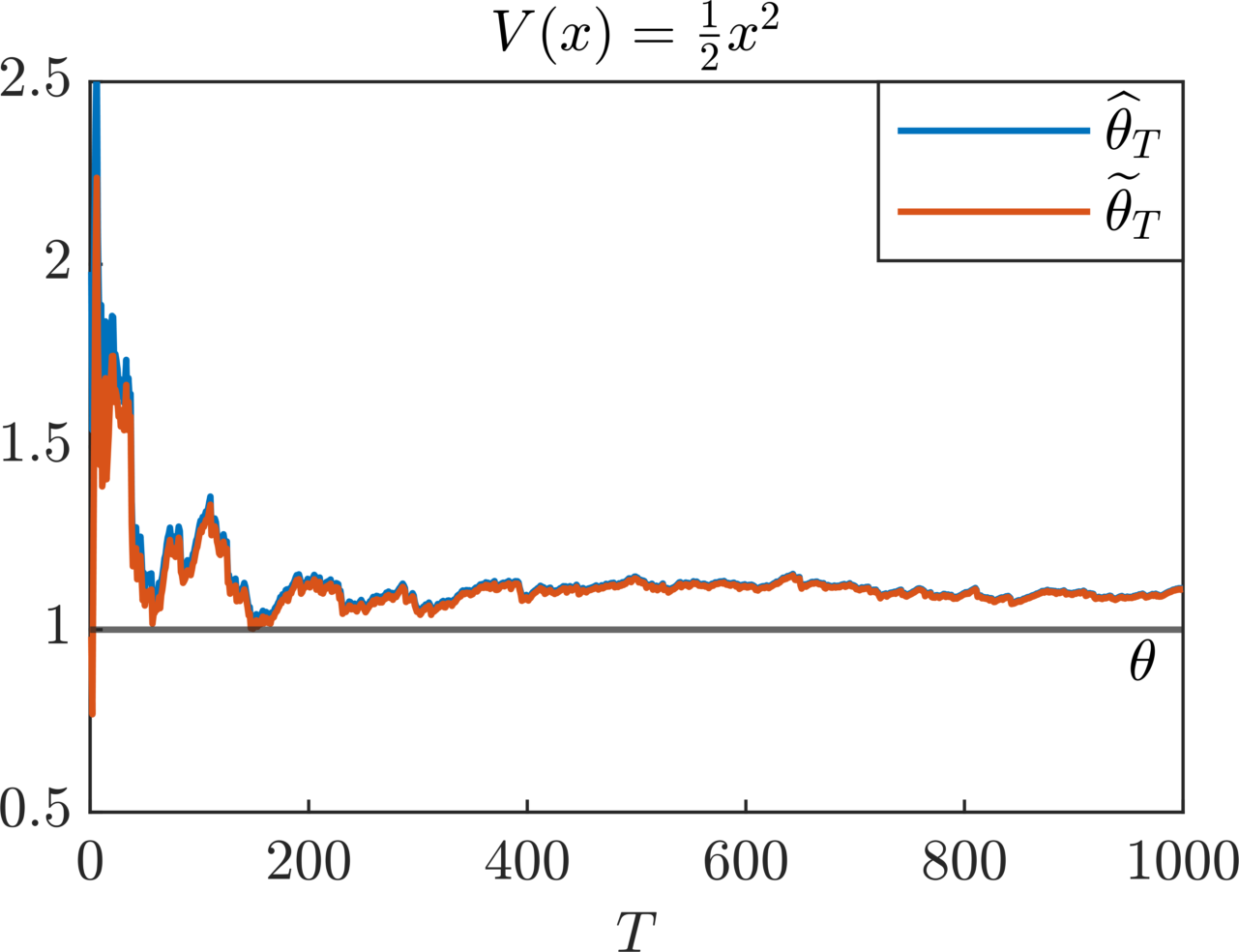} \hspace{1cm}
\includegraphics{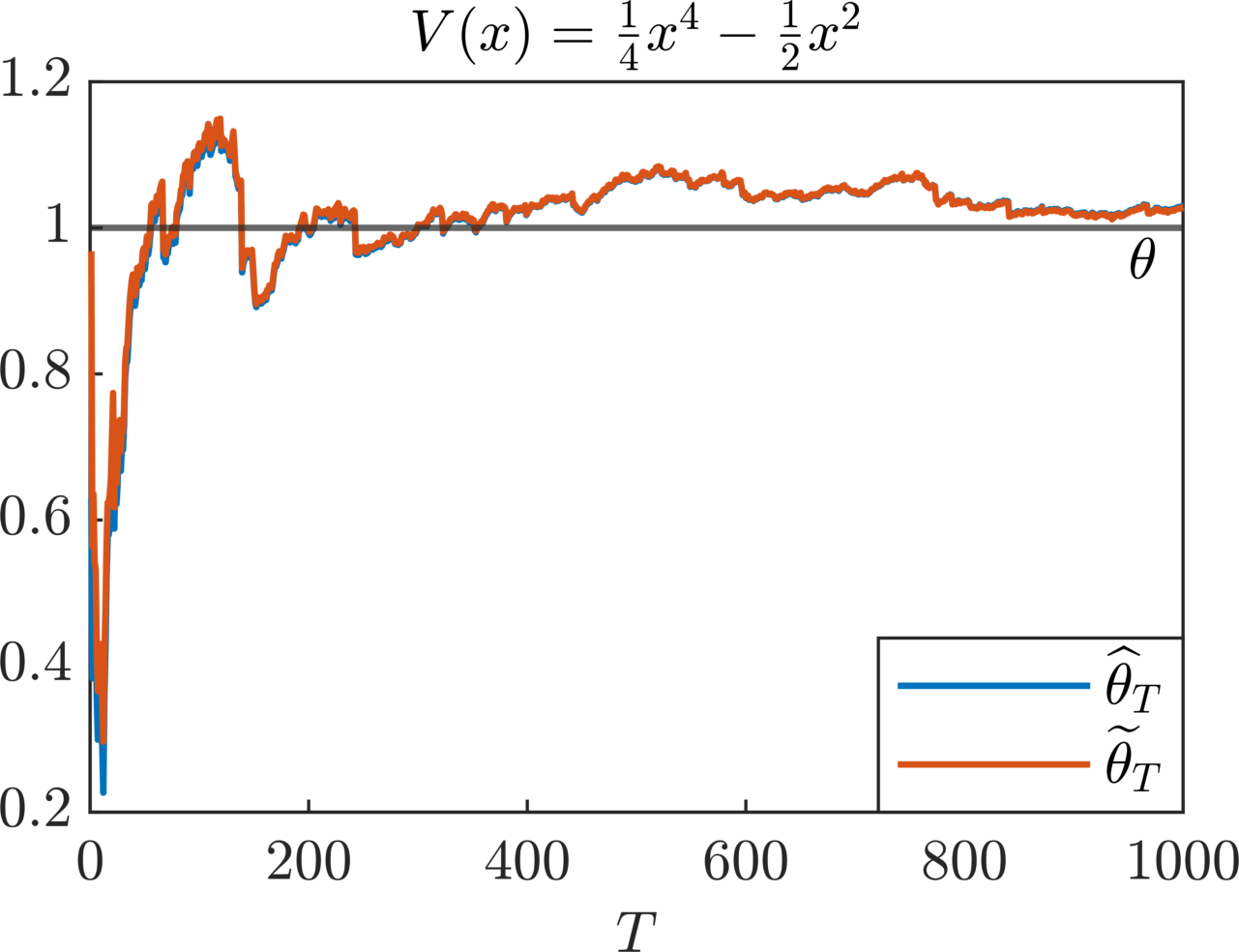}
\end{center}
\caption{Comparison between the MLE $\widehat \theta_T$ and the linearized MLE $\widetilde \theta_T$ for different values of the final time of observation $T \in [0, 10^3]$, considering the quadratic interaction potential and two different examples of confining potentials, when the unknown coefficient is in the confining potential.}
\label{fig:MLE}
\end{figure}

\begin{figure}
\begin{center}
\includegraphics{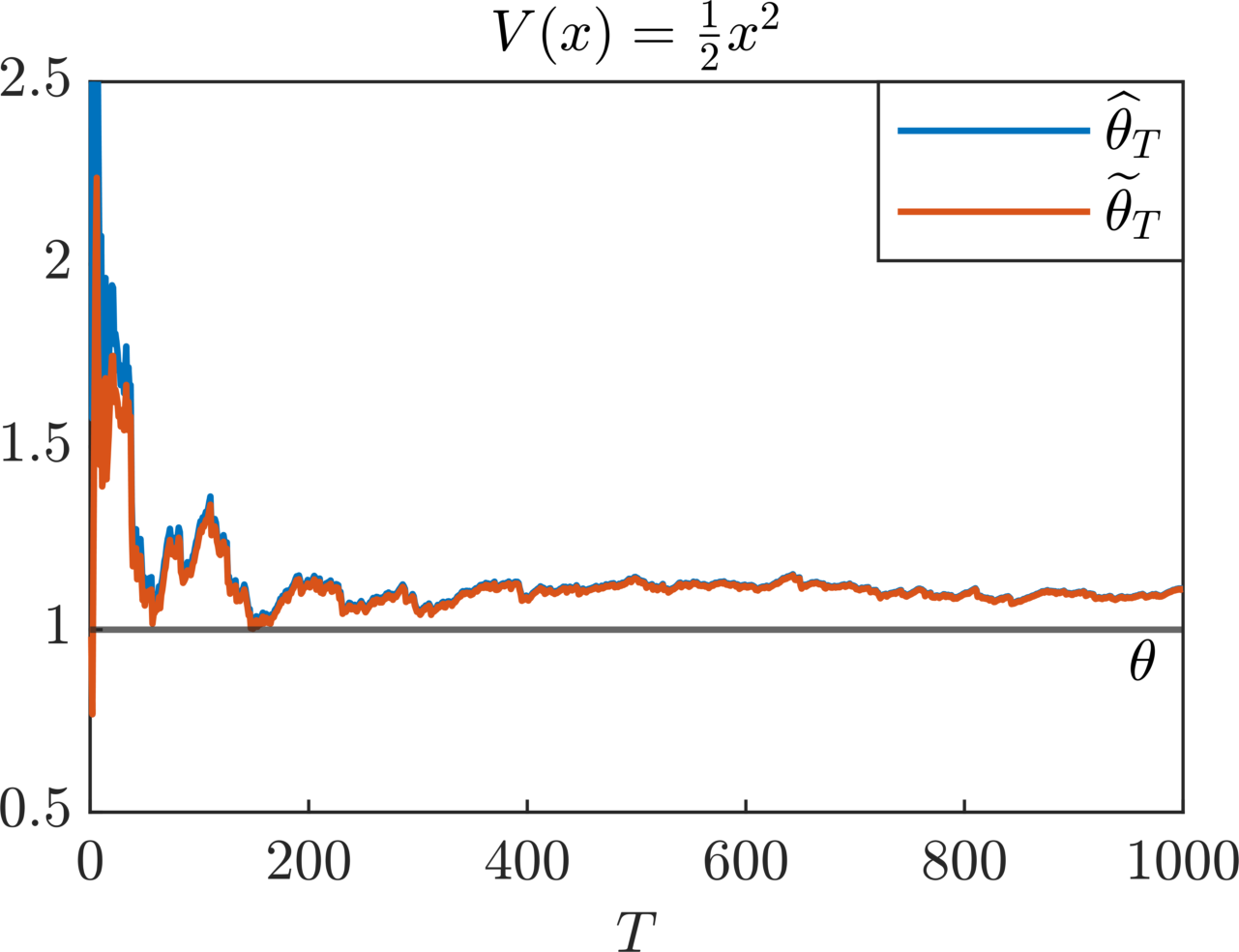} \hspace{1cm}
\includegraphics{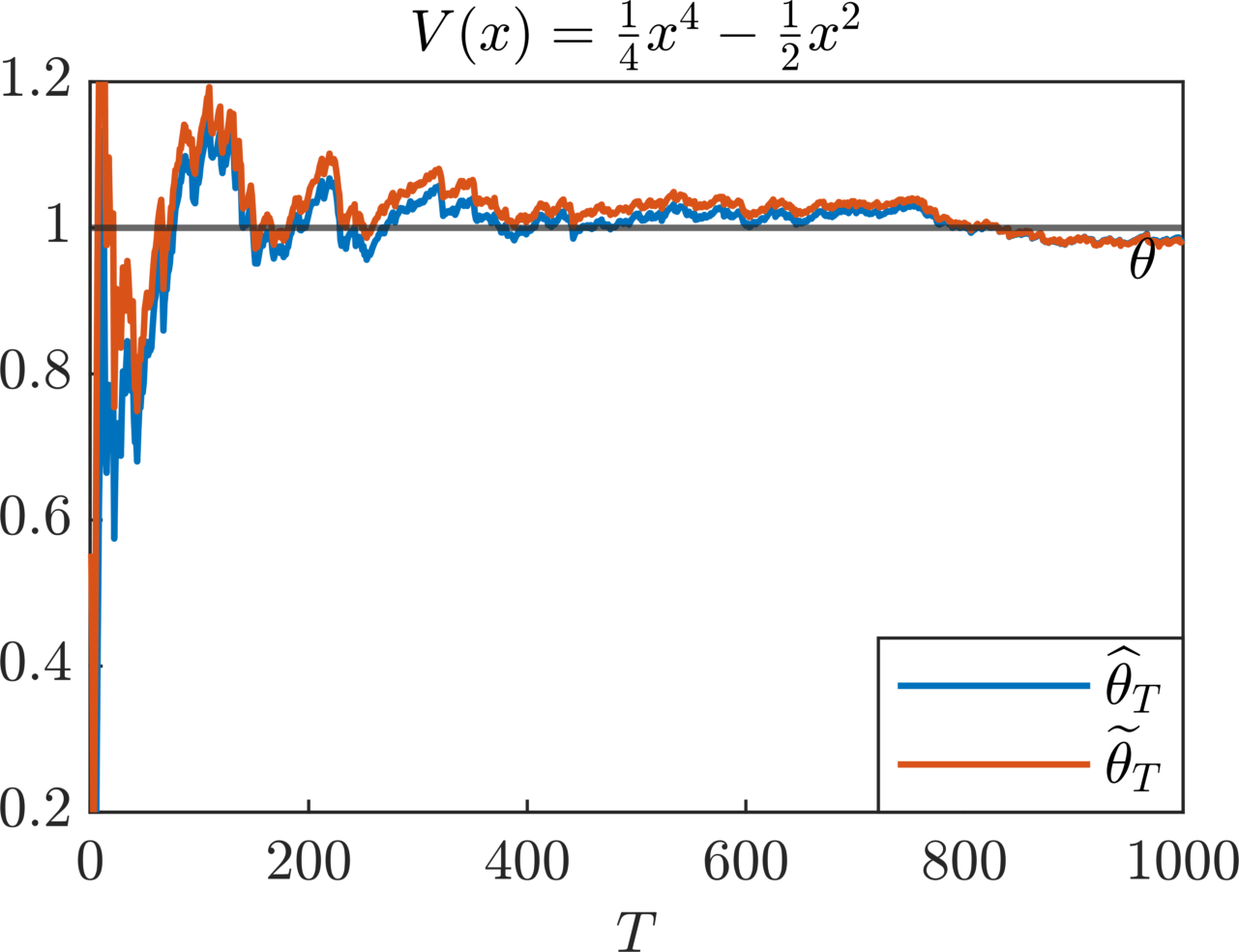}
\end{center}
\caption{Comparison between the MLE $\widehat \theta_T$ and the linearized MLE $\widetilde \theta_T$ for different values of the final time of observation $T \in [0, 10^3]$, considering the quadratic interaction potential and two different examples of confining potentials, when the unknown coefficient is in the interaction potential.}
\label{fig:MLE_interaction}
\end{figure}

We now implement the linearized MLE and test it on two numerical examples. In all the experiments presented in this section we consider deterministic initial conditions, $\mu_0 = \nu_0 = \delta_1$. For the discretization we use the Euler--Maruyama method with a time step $\Delta t = 10^{-3}$. We will also set $\beta = 1$. 

We first consider the one-dimensional mean field Ornstein--Uhlenbeck process, i.e., we set $V(x;\theta) = \theta x^2/2$ and $W(x;\theta) = x^2/2$ in equation \eqref{eq:McKeanSDE_MLE}, and therefore the McKean SDE reads
\begin{equation} \label{eq:OU}
\d X_t = - \theta X_t \dd t - (X_t - \E[X_t]) \dd t + \sqrt{2 \beta^{-1}} \dd B_t,
\end{equation}
where we set the exact value of the unknown parameter to be $\theta_0 = 1$. We take the expectation in~\eqref{eq:OU} to obtain
\begin{equation}
\E[X_t] = \E[X_0] - \theta \int_0^t \E[X_s] \dd s,
\end{equation}
which implies
\begin{equation}
\E[X_t] = \E[X_0] e^{- \theta t}.
\end{equation}
Therefore, the McKean SDE \eqref{eq:OU} can be rewritten as
\begin{equation}
\d X_t = - \theta X_t \dd t - (X_t - \E[X_0] e^{- \theta t}) \dd t + \sqrt{2 \beta^{-1}} \dd B_t,
\end{equation}
and, since its invariant measure is $\mu_\infty = \mathcal N(0, 1/(\beta(\theta+1)))$, the linearized process $Y_t$ satisfies
\begin{equation}
\d Y_t = - (\theta + 1) Y_t \dd t + \sqrt{2 \beta^{-1}} \dd B_t.
\end{equation}
Minimizing the log-likelihood $\mathfrak L_T$ and the linearized log-likelihood $\widetilde{\mathfrak L}_T$ in equations \eqref{eq:likelihood} and \eqref{eq:likelihood_linearized}, respectively, we deduce that the MLE $\widehat \theta_T$ solves the nonlinear equation
\begin{equation}
\begin{aligned}
0 &= - \int_0^T X_t \dd X_t - (\widehat \theta_T + 1) \int_0^T X_t^2 \dd t - \E[X_0] \int_0^T t e^{- \widehat \theta_T t} \dd X_t \\
&\quad + (\widehat \theta_T + 1) \E[X_0] \int_0^T t e^{- \widehat \theta_T t} X_t \dd t + \E[X_0] \int_0^T e^{- \widehat \theta_T t} X_t \dd t - \E[X_0]^2 \int_0^T t e^{-2 \widehat \theta_T t} \dd t,
\end{aligned}
\end{equation}
while the linearized MLE $\widetilde \theta_T$ is given by the closed form expression
\begin{equation}
\widetilde \theta_T = - \frac{\int_0^T X_t \dd X_t}{\int_0^T X_t^2 \dd t} - 1.
\end{equation}
In the first plot in \cref{fig:MLE}, we compare the two estimators for different values of the final time of observation $T \in [0, 10^3]$, and notice that, except for a short time interval, the difference between $\widehat \theta_T$ and $\widetilde \theta_T$ is negligible. In particular, they both provide an accurate approximation of the exact unknown.

In the second test case, we consider the bistable confining potential below the phase transition, i.e., when we have uniqueness of the invariant measure; see~\cite{Daw83, PaZ22} for details. Let $W(x;\theta) = x^2/2$ and $V(x;\theta) = \theta (x^4/4 - x^2/2)$ with true parameter $\theta_0 = 1$. Then, the invariant measure $\mu_\infty$ has density
\begin{equation}
f_\infty(x) = \frac{e^{- \theta \frac{x^4}4 - (1-\theta) \frac{x^2}2}}{\int_{\R} e^{- \theta \frac{z^4}4 - (1-\theta) \frac{z^2}2} \dd z}.
\end{equation}
The nonlinear and linear, respectively, processes $X_t$ and $Y_t$ solve the SDEs
\begin{equation}
\begin{aligned}
\d X_t &= - \theta (X_t^3 - X_t) \dd t - (X_t - \E[X_t]) \dd t + \sqrt{2 \beta^{-1}} \dd B_t, \\
\d Y_t &= - \theta Y_t^3 \dd t - (1 - \theta) Y_t \dd t + \sqrt{2 \beta^{-1}} \dd B_t.
\end{aligned}
\end{equation}
Assuming to know, in addition to the trajectory $(X_t)_{t \in [0,T]}$, also its expectation at all times $(\E[X_t])_{t \in [0,T]}$, we can write the MLE as
\begin{equation}
\widehat \theta_T = - \frac{\int_0^T (X_t^3 -X_t) \dd X_t + \int_0^T (X_t - \E[X_t])(X_t^3 -X_t) \dd t}{\int_0^T (X_t^3 - X_t)^2 \dd t}.
\end{equation}
On the other hand, the linearized MLE is given by the simpler expression
\begin{equation}
\widetilde \theta_T = - \frac{\int_0^T (X_t^3 -X_t) \dd X_t + \int_0^T X_t(X_t^3 -X_t) \dd t}{\int_0^T (X_t^3 - X_t)^2 \dd t},
\end{equation}
which does not need the expectation $\E[X_t]$. We note that, in this case, McKean's SDE must be approximated through the interacting particle system \eqref{eq:interacting_particle_system}. In particular, we use $N = 500$ particles, and a path of the mean field SDE is approximated by a path of one of the trajectories of the particle system. Due to exchangeability, it does not matter which particle we choose. We notice that assuming to know the expectation $\E[X_t]$ at all times implies that all the interacting particle system must be observed. On the contrary, for the linearized MLE, only the trajectory of a single particle is required. The behavior of the two estimators for different values of the final time of observation $T \in [0, 10^3]$ is shown in the second plot in \cref{fig:MLE}. As expected, the two estimators provide us with almost identical values for the estimated parameter, after a short initial transient time. 

We finally remark that similar results are obtained if the unknown parameter to be estimated appears in the interaction potential. Specifically, consider the mean field Ornstein--Uhlenbeck process together with its linearized process
\begin{equation}
\begin{aligned}
\d X_t &= - X_t \dd t - \theta (X_t - \E[X_t]) \dd t + \sqrt{2 \beta^{-1}} \dd B_t, \\
\d Y_t &= - (\theta + 1) Y_t \dd t + \sqrt{2 \beta^{-1}} \dd B_t.
\end{aligned}
\end{equation}
In this case we have $\E[X_t] = \E[X_0]e^{-t}$ and the MLE estimators are
\begin{equation}
\begin{aligned}
\widehat \theta_T &= - \frac{\int_0^T (X_t - \E[X_0]e^{-t}) \dd X_t + \int_0^T X_t (X_t - \E[X_0]e^{-t}) \dd t}{\int_0^T (X_t - \E[X_0]e^{-t})^2 \dd t}, \\
\widetilde \theta_T &= - \frac{\int_0^T X_t \dd X_t}{\int_0^T X_t^2 \dd t} - 1. 
\end{aligned}
\end{equation}
On the other hand, regarding the bistable confining potential example, the linear and nonlinear processes satisfy
\begin{equation}
\begin{aligned}
\d X_t &= - (X_t^3 - X_t) \dd t - \theta (X_t - \E[X_t]) \dd t + \sqrt{2 \beta^{-1}} \dd B_t, \\
\d Y_t &= - Y_t^3 \dd t - (\theta - 1) Y_t \dd t + \sqrt{2 \beta^{-1}} \dd B_t,
\end{aligned}
\end{equation}
and the corresponding MLE estimators read
\begin{equation}
\begin{aligned}
\widehat \theta_T &= - \frac{\int_0^T (X_t - \E[X_t]) \dd X_t + \int_0^T (X_t - \E[X_t])(X_t^3 -X_t) \dd t}{\int_0^T (X_t - \E[X_t])^2 \dd t}, \\
\widetilde \theta_T &= - \frac{\int_0^T X_t \dd X_t + \int_0^T X_t(X_t^3 -X_t) \dd t}{\int_0^T X_t^2 \dd t}.
\end{aligned}
\end{equation}
The numerical results are reported in \cref{fig:MLE_interaction}, where we observe that the linear and nonlinear MLE estimators agree and tend to coincide when the final time of observation increases.

\subsection{The diffusive-mean field limit} \label{sec:CLT}

We revisit the problem of studying the joint diffusive-mean field limit for weakly interacting diffusions in periodic confining and interaction potentials that was studied in \cite{DGP21}. As discussed in that paper, the two limits of sending $\epl \to 0$ and $N \to \infty$ commute only when the mean field dynamics on the torus does not exhibit phase transitions. Here, we study only the ``easy'' order of taking the two limits, first $N \to \infty$ and then sending $\epl$ to $0$, and only under the assumption that the mean field SDE has a unique steady state. A full analysis of the problem based on the linearized SDE, for both the overdamped and the underdamped dynamics, will be presented elsewhere.

We recall the McKean SDE~\eqref{eq:McKeanSDE}
\begin{equation} 
\d X_t = - \nabla V(X_t) \dd t - (\nabla W * f_t) (X_t) \dd t + \sqrt{2 \beta^{-1}} \dd B_t,
\end{equation}
where now the state space is $\R^d$, but the confining and interaction potentials $V$ and $W$ are assumed to be periodic with period the $d$-dimensional torus $\T^d$. First, notice that we have
\begin{equation}
(\nabla W * f_t)(x) = \int_{\R^d} \nabla W(x-y) f_t(y) \dd y = \sum_{k \in \Z^d} \int_{k + \T^d} \nabla W(x-y) f_t(y) \dd y,
\end{equation}
which, by the change of variables $z = y - k$ and the periodicity of $W$, implies
\begin{equation}
(\nabla W * f_t)(x) = \sum_{k \in \Z^d} \int_{\T^d} \nabla W(x-z-k) f_t(z+k) \dd z = \int_{\T^d} \nabla W(x-z) \sum_{k \in \Z^d} f_t(z+k) \dd z.
\end{equation}
Then, by defining the density $\phi_t \colon \T^d \to \R$ as
\begin{equation}
\phi_t(x) = \sum_{k \in \Z^d} f_t(x+k),
\end{equation}
we get 
\begin{equation}
(\nabla W * f_t)(x) = \int_{\T^d} \nabla W(x-z) \phi_t(z) \dd z = (\nabla W * \phi_t)(x).
\end{equation}
Moreover, considering the wrapping of the process $X_t$ on the torus, it is possible to prove that $\phi_t$ converges to the solution $\phi_\infty \colon \T^d \to \R$ of the stationary Fokker--Planck equation on $\T^d$
\begin{equation} \label{eq:invariant_phi}
\nabla \cdot \left( \left( \nabla V + \nabla W * \phi_\infty \right) \phi_\infty \right) + \beta^{-1} \Delta \phi_\infty = 0.
\end{equation}
This nonlinear and nonlocal elliptic PDE can be reformulated as the integral equation~\eqref{eq:integral_equation}; see~\cite{Tam84, Dre87}. In \cite{DGP21}, letting $\epl>0$, the diffusive rescaling $\widetilde X_t^\epl \defeq \epl X_{t/\epl^2}$ is considered and its limit as $\epl$ vanishes is studied. In particular, in \cite[Theorem 1.10]{DGP21}, under the assumption that the mean field dynamics on the torus is geometrically ergodic, the following functional central limit theorem (invariance principle) is proved. It holds
\begin{equation} \label{eq:FCLT}
\lim_{\epl\to0} \widetilde X_t^\epl = \sqrt{2\beta^{-1} \mathcal D} B_t, \qquad \text{in law},
\end{equation}
where $B_t$ is a standard $d$-dimensional Brownian motion and the matrix $\mathcal D$ is given by the standard homogenization formula~\cite[Chapter 13]{PaS08}
\begin{equation} \label{eq:Dhom_def}
\mathcal D = \E^{\phi_\infty}[(I + \nabla \Phi(X)) (I + \nabla \Phi(X))^\top],
\end{equation}
where the superscript denotes the fact that the expectation is computed with respect to the invariant measure with density $\phi_\infty$. The function $\Phi \colon \T^d \to \R^d$ in the equation above is the unique mean zero solution of the Poisson problem on $\T^d$ 
\begin{equation} \label{eq:cell_problem}
 \nabla \cdot \left(\phi_\infty \nabla \Phi  \right) = \nabla \phi_\infty,
\end{equation}
with periodic boundary conditions. The proof of this result is based on the standard methodology: application of It\^{o}'s formula to the solution of the (nolinear) Poisson equation, decomposition of the rescaled process into a martingale part and a reminder, use of PDE estimates on the Poisson equation together with the martingale central limit theorem. We remark that, for the proof of the invariance principle in our setting, it is sufficient to prove a central limit theorem of the form
\begin{equation} \label{eq:CLT}
\lim_{t \to \infty} \frac1{\sqrt t} X_t = \mathcal N(0, 2\beta^{-1} \mathcal D), \qquad \text{in law},
\end{equation}
as done in \cite[Lemma 2.3]{DGP21}. The invariance principle then follows from standard arguments; see, e.g.,~\cite{Oll94, KLO12}. We also remark that equation \eqref{eq:CLT} is also implied by equation \eqref{eq:FCLT}, by setting $t=1$ and $\epl = 1/\sqrt{t}$. Here we show that, following a similar but simpler argument, the same limit results can be obtained for the linearized process.

\subsubsection{Limit distribution of the linearized process}

In this section, we show that the linearized process behaves similarly in the limit $t\to\infty$, and, in particular, that the invariance principle and central limit theorem \eqref{eq:CLT} and \eqref{eq:FCLT}, respectively, are also valid for the linearized process, and with the same covariance matrix for the limiting Brownian motion. We make the appropriate assumptions on the drift and diffusion coefficients that ensure that the mean field process is geometrically ergodic and we consider the linearized process $Y_t$ around the unique invariant measure $\phi_\infty$
\begin{equation} \label{eq:linearized_Y}
\d Y_t = - \nabla V(Y_t) \dd t - (\nabla W * \phi_\infty) (Y_t) \dd t + \sqrt{2 \beta^{-1}} \dd B_t,
\end{equation}
and let $\widetilde Y_t^\epl \defeq \epl Y_{t/\epl^2}$. In the next two theorems we first prove the CLT for $Y_t$, whose argument is inspired by the proof of \cite[Lemma 2.3]{DGP21}, and then compute the limit of $\widetilde Y_t^\epl$ following the methodology of \cite{Oll94}. These results, which are analogous to equations \eqref{eq:CLT} and \eqref{eq:FCLT}, respectively, show that the limit distributions are the same as those for the nonlinear process.

\begin{theorem} \label{thm:CLT_linearized}
Let $Y_t$ be the solution of the SDE \eqref{eq:linearized_Y}. Then, it holds
\begin{equation}
\lim_{t \to \infty} \frac1{\sqrt t} Y_t = \mathcal N(0, 2\beta^{-1} \mathcal D), \qquad \text{in law},
\end{equation}
where $\mathcal D$ is defined in equation \eqref{eq:Dhom_def}.
\end{theorem}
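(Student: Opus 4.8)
The plan is to exploit the fact that $Y_t$ solving \eqref{eq:linearized_Y} is a \emph{time-homogeneous} overdamped Langevin diffusion on $\R^d$ with periodic drift. Since $W$ is even, $\nabla W * \phi_\infty = \nabla(W*\phi_\infty)$, so the drift equals $-\nabla\Psi$ with $\Psi \defeq V + W*\phi_\infty \in \mathcal C^2(\T^d)$, and the stationary Fokker--Planck equation \eqref{eq:invariant_phi} (equivalently the integral equation) identifies $\phi_\infty$ as the Gibbs density $\phi_\infty = Z^{-1}e^{-\beta\Psi}$, which is smooth, strictly positive, and bounded away from $0$ and $\infty$ on the compact torus $\T^d$. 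Consequently the generator $\mathcal L g = -\nabla\Psi\cdot\nabla g + \beta^{-1}\Delta g$ is the weighted Laplacian $\mathcal L g = \beta^{-1}\phi_\infty^{-1}\nabla\cdot(\phi_\infty\nabla g)$, self-adjoint in $L^2(\phi_\infty)$, and the torus-wrapped process $\widehat Y_t \defeq Y_t \bmod \Z^d$ is a uniformly elliptic diffusion on $\T^d$ which, under the standing geometric-ergodicity assumption, is ergodic with unique invariant measure $\phi_\infty\,\dd x$ and admits a spectral gap. This last point is also what makes the corrector $\Phi \colon \T^d \to \R^d$ well defined: the right-hand side of the cell problem \eqref{eq:cell_problem} is a gradient and hence has zero $\T^d$-average, so the equation is solvable; fixing the mean-zero representative, elliptic regularity together with $\phi_\infty \in \mathcal C^1$ gives $\Phi \in \mathcal C^1(\T^d;\R^d)$, in particular $\Phi$ and $\nabla\Phi$ are bounded. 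I would normalise $\Phi$ so that each coordinate of $y \mapsto y+\Phi(y)$ is $\mathcal L$-harmonic, which is precisely the content of \eqref{eq:cell_problem} combined with \eqref{eq:invariant_phi}.

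Next I would set up the martingale decomposition. Applying It\^o's formula to $y \mapsto y+\Phi(y)$ (extending $\Phi$ $\Z^d$-periodically to $\R^d$) and using that the drift term cancels, one obtains
\[
Y_t + \Phi(Y_t) = Y_0 + \Phi(Y_0) + M_t, \qquad M_t \defeq \sqrt{2\beta^{-1}} \int_0^t \left( I + \nabla\Phi(Y_s) \right) \dd B_s ,
\]
so that $\tfrac{1}{\sqrt t} Y_t = \tfrac{1}{\sqrt t}\left( Y_0 + \Phi(Y_0) - \Phi(Y_t) \right) + \tfrac{1}{\sqrt t} M_t$. Since $\Phi$ is bounded, the first term converges to $0$ almost surely, and it remains to analyse the continuous martingale $M_t$. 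Its quadratic variation is $\langle M \rangle_t = 2\beta^{-1} \int_0^t \Sigma(Y_s)\,\dd s$ with $\Sigma(y) \defeq (I + \nabla\Phi(y))(I + \nabla\Phi(y))^\top$; by $\Z^d$-periodicity $\Sigma(Y_s) = \Sigma(\widehat Y_s)$, so the ergodic theorem for $\widehat Y$ gives $\tfrac{1}{t}\langle M \rangle_t \to 2\beta^{-1}\,\E^{\phi_\infty}[\Sigma] = 2\beta^{-1}\mathcal D$ almost surely, with $\mathcal D$ as in \eqref{eq:Dhom_def}. The continuous martingale central limit theorem (see, e.g., \cite{KLO12}) then yields $\tfrac{1}{\sqrt t} M_t \Rightarrow \mathcal N(0, 2\beta^{-1}\mathcal D)$ as $t \to \infty$, and Slutsky's theorem gives the claimed convergence in law of $\tfrac{1}{\sqrt t} Y_t$.

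The argument is essentially routine, and simpler than its nonlinear counterpart in \cite[Lemma 2.3]{DGP21}: because $Y_t$ has the fixed drift $-\nabla\Psi$, It\^o's formula applied to $y+\Phi(y)$ produces no remainder involving $f_t - \phi_\infty$, so no PDE estimates on the cell problem driven by the transient density are needed. The two points requiring care are therefore (i) the well-posedness and $\mathcal C^1$ regularity of the corrector $\Phi$ and the ergodicity of the torus-wrapped linearised diffusion --- both of which follow from uniform ellipticity on the compact torus and the standing geometric-ergodicity assumption, the spectral gap of $\mathcal L$ in $L^2(\phi_\infty)$ simultaneously ensuring $\mathcal D < \infty$ --- and (ii) verifying the hypotheses of the martingale central limit theorem, which here reduces to the almost-sure convergence of $\tfrac1t\langle M\rangle_t$ just established, the Lindeberg condition being trivial since $M$ is continuous with bounded diffusion coefficient. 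I expect (i) --- specifically, making precise in what sense the wrapped process inherits ergodicity and the corrector inherits bounded derivatives under the paper's hypotheses on $V$ and $W$ --- to be the only genuinely delicate point.
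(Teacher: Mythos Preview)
Your proposal is correct and follows essentially the same route as the paper's proof: both arguments apply It\^o's formula to the corrector to obtain the martingale decomposition $Y_t = Y_0 + \Phi(Y_0) - \Phi(Y_t) + \sqrt{2\beta^{-1}}\int_0^t (I+\nabla\Phi(Y_s))\,\d B_s$, kill the remainder by boundedness of $\Phi$, identify the limiting quadratic variation via periodicity and the ergodic theorem, and conclude with the martingale central limit theorem and Slutsky. Your write-up is in fact more careful than the paper's in spelling out the well-posedness and regularity of the corrector and the ergodicity of the wrapped process, points the paper leaves implicit.
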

\begin{proof}
Let $\Phi$ be given by equation \eqref{eq:cell_problem}. Applying Itô's lemma, we have
\begin{equation}
\begin{aligned}
\Phi(Y_t) &= \Phi(Y_0) + \sqrt{2\beta^{-1}} \int_0^t \nabla \Phi(Y_s) \dd B_s \\
&\quad - \int_0^t \nabla \Phi(Y_s) \left( \nabla V(Y_s) + (\nabla W * \phi_\infty)(Y_s) \right) \dd s  + \beta^{-1} \int_0^t \Delta \Phi(Y_s) \dd s,
\end{aligned}
\end{equation}
and by equation \eqref{eq:cell_problem} we obtain
\begin{equation}
- \int_0^t \left( \nabla V(Y_s) + (\nabla W * \phi_\infty)(Y_s) \right) \dd s = \Phi(Y_0) - \Phi(Y_t) + \sqrt{2\beta^{-1}} \int_0^t \nabla \Phi(Y_s) \dd B_s.
\end{equation}
Therefore, we get 
\begin{equation}
\begin{aligned}
Y_t &= Y_0 - \int_0^t \left( \nabla V(Y_s) + (\nabla W * \phi_\infty)(Y_s) \right) \dd s + \sqrt{2\beta^{-1}} \int_0^t \d B_s \\
&= Y_0 + \Phi(Y_0) - \Phi(Y_t) + \sqrt{2\beta^{-1}} \int_0^t (I + \nabla \Phi(Y_s)) \dd B_s,
\end{aligned}
\end{equation}
which implies
\begin{equation}
\frac1{\sqrt t} Y_t = \frac{Y_0 + \Phi(Y_0) - \Phi(Y_t)}{\sqrt t} + \sqrt{2\beta^{-1}} \frac1{\sqrt t} \int_0^t (I + \nabla \Phi(Y_s)) \dd B_s.
\end{equation}
Due to the boundedness of $\Phi$, we deduce that the first term in the right-hand side vanishes almost surely, and therefore in probability. Moreover, by the periodicity of $\Phi$ and the ergodic theorem we have
\begin{equation}
\lim_{t \to \infty} \frac1t \int_0^t (I + \nabla \Phi(Y_s)) (I + \nabla \Phi(Y_s))^\top \dd s = \E^{\phi_\infty}[(I + \nabla \Phi(Y)) (I + \nabla \Phi(Y))^\top] = \mathcal D.
\end{equation}
Finally, by the martingale central limit theorem we deduce that
\begin{equation}
\lim_{t \to \infty} \frac1{\sqrt t} \int_0^t (I + \nabla \Phi(Y_s)) \dd s = \mathcal N(0,\mathcal D), \qquad \text{in law},
\end{equation}
and the desired result follows by Slutsky's theorem. 
\end{proof} 

\begin{theorem} \label{thm:FCLT_linearized}
Let $Y_t$ be the solution of the SDE \eqref{eq:linearized_Y}, and let $\widetilde Y^\epl_t = \epl Y_{t/\epl^2}$. Then, it holds
\begin{equation}
\lim_{\epl \to 0} \widetilde Y^\epl_t = \sqrt{2\beta^{-1} \mathcal D} B_t, \qquad \text{in law},
\end{equation}
where $\mathcal D$ is defined in equation \eqref{eq:Dhom_def} and $B_t$ is a standard $d$-dimensional Brownian motion.
\end{theorem}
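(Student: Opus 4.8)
The plan is to bootstrap the central limit theorem of \cref{thm:CLT_linearized} into a functional central limit theorem by isolating a martingale whose diffusive rescaling converges to Brownian motion, following the classical strategy for the homogenization of diffusions in periodic media \cite{Oll94, KLO12, PaS08}. Recall that in the proof of \cref{thm:CLT_linearized} an application of It\^o's lemma to the mean-zero solution $\Phi$ of the cell problem \eqref{eq:cell_problem} gave the identity $Y_t = Y_0 + \Phi(Y_0) - \Phi(Y_t) + \sqrt{2\beta^{-1}}\int_0^t (I + \nabla\Phi(Y_s))\dd B_s$. Applying the diffusive rescaling and the Brownian scaling $B^\epl_s \defeq \epl B_{s/\epl^2}$ (which is again a standard Brownian motion), I would write
\begin{equation*}
\widetilde Y^\epl_t = \epl\big(Y_0 + \Phi(Y_0) - \Phi(Y_{t/\epl^2})\big) + \sqrt{2\beta^{-1}}\int_0^t \big(I + \nabla\Phi(Y_{s/\epl^2})\big)\dd B^\epl_s \eqdef R^\epl_t + M^\epl_t,
\end{equation*}
and observe that, since $\Phi$ is bounded on $\T^d$, the remainder $R^\epl$ converges to zero uniformly on compact time intervals, almost surely and hence in probability.

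The core of the argument is then to show $M^\epl \Rightarrow \sqrt{2\beta^{-1}\mathcal D}\,B$ in $C([0,\infty);\R^d)$. Since $M^\epl$ is a continuous martingale, I would appeal to the martingale functional CLT (e.g.\ Rebolledo's theorem in the form used in \cite{Oll94}), for which the only nontrivial hypothesis is that the matrix-valued quadratic variation
\begin{equation*}
\langle M^\epl\rangle_t = 2\beta^{-1}\epl^2\int_0^{t/\epl^2}\big(I+\nabla\Phi(Y_s)\big)\big(I+\nabla\Phi(Y_s)\big)^\top\dd s
\end{equation*}
converges, locally uniformly in $t$ and in probability, to the deterministic limit $2\beta^{-1}\mathcal D\,t$. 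For fixed $t$ this is exactly the ergodic averaging used in the proof of \cref{thm:CLT_linearized}: by the periodicity of $\Phi$ and the ergodic theorem the average converges almost surely to $\mathcal D$ as given by \eqref{eq:Dhom_def}. Since $t\mapsto\langle M^\epl\rangle_t$ is componentwise nondecreasing and the limit is continuous, a Dini-type argument upgrades pointwise to locally uniform convergence. Having checked these conditions I would conclude $M^\epl\Rightarrow\sqrt{2\beta^{-1}\mathcal D}\,B$, and then combine this with the uniform vanishing of $R^\epl$ via Slutsky's theorem (or the continuous mapping theorem) to obtain $\widetilde Y^\epl\Rightarrow\sqrt{2\beta^{-1}\mathcal D}\,B$, which is the claim.

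The step I expect to require the most care is the verification of the martingale FCLT hypotheses, namely tightness of $\{M^\epl\}_{\epl>0}$ in path space and the locally uniform (in probability) convergence of $\langle M^\epl\rangle$ to the linear function $t\mapsto 2\beta^{-1}\mathcal D\,t$; the jump condition is automatic since $M^\epl$ is continuous. Both follow from the ergodic theorem applied along a countable dense set of times together with the monotonicity of the quadratic variation, exactly as in the standard treatment \cite{Oll94, KLO12}. Two minor ingredients are needed throughout: that $\nabla\Phi\in L^2(\phi_\infty)$, which follows from elliptic regularity for \eqref{eq:cell_problem} given that $\phi_\infty$ is bounded above and below away from zero on $\T^d$, and the geometric ergodicity of $Y_t$ on the torus, which legitimizes the use of the ergodic theorem with respect to $\phi_\infty$.
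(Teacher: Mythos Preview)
Your proposal is correct and follows essentially the same approach as the paper: both start from the It\^o decomposition $Y_t = Y_0 + \Phi(Y_0) - \Phi(Y_t) + \sqrt{2\beta^{-1}}\int_0^t (I+\nabla\Phi(Y_s))\dd B_s$, rescale diffusively, discard the bounded remainder, and conclude via the martingale functional CLT of \cite{Oll94} after checking that the quadratic variation of the rescaled martingale converges to $2\beta^{-1}\mathcal D\,t$ by the ergodic theorem. The paper's proof is slightly terser (it simply verifies convergence of the quadratic variation of the increment martingale and defers to \cite[Theorem 1.5]{Oll94}), whereas you spell out the Dini-type upgrade to locally uniform convergence and the tightness/regularity ingredients, but these are exactly the details implicit in that reference.
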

\begin{proof}
Reasoning as in the proof of \cref{thm:CLT_linearized}, we obtain
\begin{equation}
\widetilde Y^\epl_t = \epl Y_{t/\epl^2} = \epl \left( Y_0 + \Phi(Y_0) - \Phi(Y_{t/\epl^2}) \right) + \epl \sqrt{2\beta^{-1}} \int_0^{t/\epl^2} (I + \nabla \Phi(Y_s)) \dd B_s,
\end{equation}
where the first term in the right-hand side vanishes almost surely, and therefore in probability, as $\epl \to 0$ due to the boundedness of $\Phi$. Hence, the limiting properties of the rescaled process $\widetilde Y^\epl_t$ are determined studying the second part. Introducing the martingale
\begin{equation}
\mathcal M_\tau = \sqrt{2\beta^{-1}} \int_0^\tau (I + \nabla \Phi(Y_s)) \dd B_s,
\end{equation}
we can write
\begin{equation}
\widetilde Y^\epl_t = \epl \left( Y_0 + \Phi(Y_0) - \Phi(Y_{t/\epl^2}) \right) + \epl \mathcal M_{t/\epl^2}.
\end{equation}
By the ergodic theorem and the periodicity of $\Phi$, the quadratic variation $[\mathcal M]_{t/\epl^2}$ of the martingale term satisfies
\begin{equation} \label{eq:QV_M}
\lim_{\epl \to 0} [\epl \mathcal M]_{t/\epl^2} = 2 \beta^{-1} t \lim_{\epl \to 0} \frac{\epl^2}t \int_0^{t/\epl^2} (I + \nabla \Phi(Y_s)) (I + \nabla \Phi(Y_s))^\top \dd s = 2\beta^{-1} t \mathcal D.
\end{equation}
We now define the difference martingale 
\begin{equation}
\mathfrak M_{s,t}^\epl = \epl \mathcal M_{t/\epl^2} - \epl \mathcal M_{s/\epl^2},
\end{equation}
whose quadratic variation, reasoning as in equation \eqref{eq:QV_M}, satisfies
\begin{equation}
\lim_{\epl \to 0} [\mathfrak M^\epl]_{s,t} = 2 \beta^{-1} \lim_{\epl \to 0} \epl^2 \int_{s/\epl^2}^{t/\epl^2} (I + \nabla \Phi(Y_\tau)) (I + \nabla \Phi(Y_\tau))^\top \dd \tau = 2\beta^{-1}(t-s) \mathcal D.
\end{equation}
Finally, the desired result is obtained following the proof of \cite[Theorem 1.5]{Oll94}.
\end{proof}

\subsubsection{Numerical example}

\begin{figure}
\begin{center}
\includegraphics{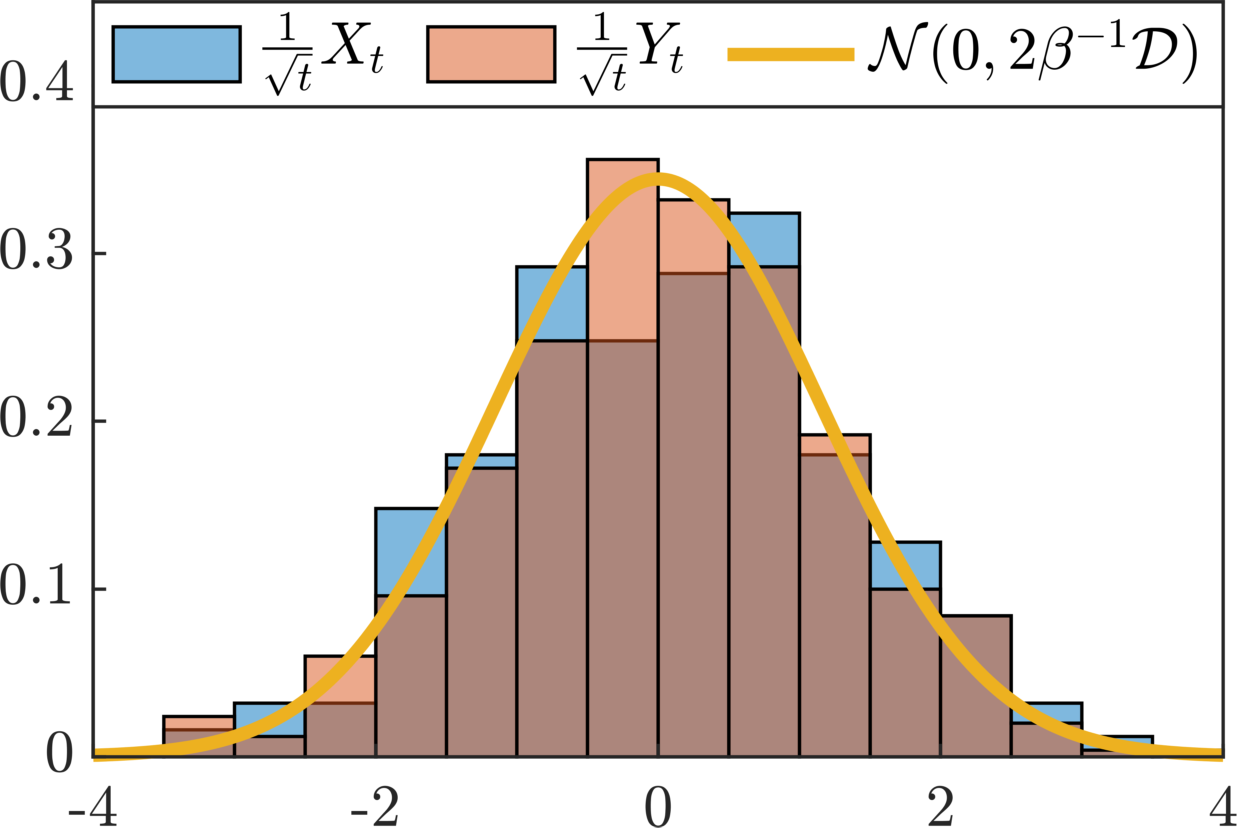}
\end{center}
\caption{Numerical verification of the CLTs in equation \eqref{eq:CLT} and \cref{thm:CLT_linearized}, considering sinusoidal confining and interaction potentials.}
\label{fig:CLT}
\end{figure}

We numerically verify in a one-dimensional test case the CLTs for the nonlinear and linear processes $X_t$ and $Y_t$ given in equations \eqref{eq:CLT} and \cref{thm:CLT_linearized}, respectively. We consider the setting of \cite[Lemma 1.26]{DGP21} below the phase transition, i.e., we set $W(x) = - \cos(2\pi x)$, $V(x) = - \xi \cos(2\pi x)$ with $\xi = 1/2$, and $\beta = 1$. In \cite[Lemma 1.26]{DGP21}, it is proved that the wrapping of the solution of the McKean SDE has a unique invariant measure with density
\begin{equation}
\phi_\infty(x) = \frac{e^{A \cos(2\pi x)}}{\int_{\T} e^{A \cos(2\pi z)} \dd z},
\end{equation}
where $A$ is the solution of the nonlinear equation
\begin{equation}
A = \beta \left(\xi + \frac{I_1(A)}{I_0(A)} \right),
\end{equation}
with $I_0$ and $I_1$ being modified Bessel functions of the first kind. The calculation of the limit diffusion coefficient, based upon the solution of the Poison equation~\eqref{eq:cell_problem}, is identical to what is done in the standard theory of periodic homogenization~\cite[Section 13.6.1]{PaS08}. Hence, we deduce that the limit coefficient $\mathcal D$ is given by
\begin{equation}
\mathcal D = \frac1{\left( \int_{\T} e^{A \cos(2\pi z)} \dd z \right)^2} = I_0(A)^{-2}.
\end{equation}
Moreover, equation \eqref{eq:linearized_Y} for the linearized process $Y_t$ can be rewritten as
\begin{equation} \label{eq:CLT_linear_Y}
\dd Y_t = - 2\pi \beta^{-1} A \sin(2\pi Y_t) \dd t + \sqrt{2\beta^{-1}} \dd B_t.
\end{equation}
In \cref{fig:CLT} we show the histograms of $X_t/\sqrt t$ and $Y_t/\sqrt t$ with $t = 10^3$ for $500$ different realizations of the Brownian motion. We remark that, differently from equation \eqref{eq:CLT_linear_Y} that can be simulated, the McKean SDE for $X_t$ must be approximated by the interacting particle system \eqref{eq:interacting_particle_system}, for which we use $N = 250$ particles. All the SDEs have initial distribution $\mu_0 = \nu_0 = \delta_0$, and they are discretized using the Euler--Maruyama method with a time step $\Delta t = 10^{-2}$. We observe that the histograms match the limit Gaussian distribution $\mathcal N(0, 2\beta^{-1} \mathcal D)$ predicted by the theory.

\section{Conclusion} \label{sec:conclusion}

In this work, we considered the long-time behaviour of the McKean SDE, obtained in the mean field limit of a system of weakly interacting diffusions. We showed that, under the appropriate assumptions on the confining and interaction potentials that ensure that the McKean SDE is geometrically ergodic, we can ``linearize'' the nonlinear process in a systematic way and derive an Itô diffusion process that behaves like the nonlinear process at long times. This linear (in the sense of McKean) SDE is obtained by replacing the law of the process with the invariant measure of the dynamics. We considered the problem both with $\R^d$ and $\T^d$ as state spaces, and studied the asymptotic behavior of the law of the linearized process. We showed its exponentially fast convergence in time to the law of the nonlinear process, even for different initial distributions, both in relative entropy and Wasserstein distance. The rate of convergence depends on the coefficients of the McKean SDE via the LSI constant of the corresponding Gibbs measure and on the LSI of the initial distribution. Our analysis strongly relies on LSIs for the law of the linearized process. Moreover, we also proved strong convergence of the paths in $L^2$ when the state space is $\R^d$. We remark that an additional contribution of our work is the study of the convergence to equilibrium for the McKean SDE on the torus with nonzero confining potential. In fact, under technical assumptions on the coefficients, we showed exponentially fast convergence to the invariant measure both in relative entropy and $L^1$ and $L^2$ for the densities.

Furthermore, we applied the linearization procedure to the study of the MLE for estimating unknown parameters in the McKean SDE. In particular, we considered the likelihood function of the linearized process, and proposed a ``linearized'' MLE that remains asymptotically unbiased in the limit of infinite data. We finally considered the combined diffusive-mean field limit for interacting particle systems in periodic interaction and confining potentials, and we verified that the limit distributions of the linear and nonlinear processes are the same. 

In the same spirit as we did in \cite{PaZ22}, where we implicitly used this approach, as future developments of this work, we plan to use the techniques developed in this paper to analyze nonparametric estimators for interacting particle systems. A crucial component of this program is the accurate approximation of the invariant measure, which is used to derive the linearized McKean SDE.  

The proposed methodology can be applied to other problems. We are particularly interested in the case where there are multiple invariant measures. We believe that our methodology can be applied to study the problem considered in~\cite{MoR24}. In addition, we would like to obtain similar results for the underdamped/kinetic McKean SDE and for the corresponding McKean--Vlasov PDE, as well as for coupled Vlasov--Poisson--Fokker--Planck systems. Another application of the proposed linearization procedure is the rigorous analysis of optimal control methodologies for the mean field dynamics that were developed in~\cite{BKP24}. All of these topics are currently under investigation.

\subsection*{Acknowledgements} 

We thank the anonymous reviewers whose comments and suggestions helped improve and clarify this manuscript. GAP is partially supported by an ERC-EPSRC Frontier Research Guarantee through grant no. EP/X038645, ERC through Advanced grant no. 247031, and a Leverhulme Trust Senior Research Fellowship, SRF$\backslash$R1$\backslash$241055. AZ is supported by ``Centro di Ricerca Matematica Ennio De Giorgi'' and the ``Emma e Giovanni Sansone'' Foundation, and is member of INdAM-GNCS.

\bibliographystyle{siamnodash}
\bibliography{biblio}

\end{document}